\theoremstyle{plain}
\newtheorem{theorem}{Theorem}
\newtheorem{lemma}[theorem]{Lemma}
\newtheorem{definition}[theorem]{Definition}
\newtheorem{problem}[theorem]{Problem}
\newtheorem{conjecture}[theorem]{Conjecture}
\newtheorem*{conjecture4M4}{\boldmath The $4M-4$ Conjecture}
\begin{document}

\title{Saving phase: Injectivity and stability for phase retrieval}




\author[princeton]{Afonso S.\ Bandeira}
\author[missouri]{Jameson Cahill}
\author[afit]{Dustin G.\ Mixon}
\author[afit]{Aaron A.\ Nelson}

\address[princeton]{Program in Applied and Computational Mathematics (PACM), Princeton University, Princeton, NJ 08544}
\address[missouri]{Department of Mathematics, University of Missouri, Columbia, MO 65211}
\address[afit]{Department of Mathematics and Statistics, Air Force Institute of Technology, Wright-Patterson Air Force Base, OH 45433}

\begin{abstract}
Recent advances in convex optimization have led to new strides in the phase retrieval problem over finite-dimensional vector spaces.
However, certain fundamental questions remain:
What sorts of measurement vectors uniquely determine every signal up to a global phase factor, and how many are needed to do so?
Furthermore, which measurement ensembles yield stability?
This paper presents several results that address each of these questions.
We begin by characterizing injectivity, and we identify that the complement property is indeed a necessary condition in the complex case.
We then pose a conjecture that $4M-4$ generic measurement vectors are both necessary and sufficient for injectivity in $M$ dimensions, and we prove this conjecture in the special cases where $M=2,3$.
Next, we shift our attention to stability, both in the worst and average cases.
Here, we characterize worst-case stability in the real case by introducing a numerical version of the complement property.
This new property bears some resemblance to the restricted isometry property of compressed sensing and can be used to derive a sharp lower Lipschitz bound on the intensity measurement mapping.
Localized frames are shown to lack this property (suggesting instability), whereas Gaussian random measurements are shown to satisfy this property with high probability.
We conclude by presenting results that use a stochastic noise model in both the real and complex cases, and we leverage Cramer-Rao lower bounds to identify stability with stronger versions of the injectivity characterizations.
\end{abstract}

\begin{keyword}
phase retrieval \sep quantum mechanics \sep bilipschitz function \sep Cramer-Rao lower bound 
\end{keyword}

\maketitle

\section{Introduction}

Signals are often passed through linear systems, and in some applications, only the pointwise absolute value of the output is available for analysis.
For example, in high-power coherent diffractive imaging, this loss of phase information is eminent, as one only has access to the power spectrum of the desired signal~\cite{BunkDPDSSV:07}.
\textit{Phase retrieval} is the problem of recovering a signal from absolute values (squared) of linear measurements, called \textit{intensity measurements}.
Note that phase retrieval is often impossible---intensity measurements with the identity basis effectively discard the phase information of the signal's entries, and so this measurement process is not at all injective; the power spectrum similarly discards the phases of Fourier coefficients.
This fact has led many researchers to invoke a priori knowledge of the desired signal, since intensity measurements might be injective when restricted to a smaller signal class.
Unfortunately, this route has yet to produce practical phase retrieval guarantees, and practitioners currently resort to various ad hoc methods that often fail to work.

Thankfully, there is an alternative approach to phase retrieval, as introduced in 2006 by Balan, Casazza and Edidin~\cite{BalanCE:06}:
Seek injectivity, not by finding a smaller signal class, but rather by designing a larger ensemble of intensity measurements.
In~\cite{BalanCE:06}, Balan et al.\ characterized injectivity in the real case and further leveraged algebraic geometry to show that $4M-2$ intensity measurements suffice for injectivity over $M$-dimensional complex signals.
This realization that so few measurements can yield injectivity has since prompted a flurry of research in search of practical phase retrieval guarantees~\cite{AlexeevBFM:12,Balan:12,BalanBCE:09,CandesESV:11,CandesL:12,CandesSV:11,DemanetH:12,EldarM:12,WaldspurgerAM:12}.
Notably, Cand\`{e}s, Strohmer and Voroninski~\cite{CandesSV:11} viewed intensity measurements as Hilbert-Schmidt inner products between rank-1 operators, and they applied certain intuition from compressed sensing to stably reconstruct the desired $M$-dimensional signal with semidefinite programming using only $\mathcal{O}(M\log M)$ random measurements; similar alternatives and refinements have since been identified~\cite{CandesESV:11,CandesL:12,DemanetH:12,WaldspurgerAM:12}.
Another alternative exploits the polarization identity to discern relative phases between certain intensity measurements; this method uses $\mathcal{O}(M\log M)$ random measurements in concert with an expander graph, and comes with a similar stability guarantee~\cite{AlexeevBFM:12}.

Despite these recent strides in phase retrieval algorithms, there remains a fundamental lack of understanding about what it takes for intensity measurements to be injective, let alone whether measurements yield stability (a more numerical notion of injectivity).
For example, until very recently, it was believed that $3M-2$ intensity measurements sufficed for injectivity (see for example~\cite{CandesESV:11}); this was disproved by Heinosaari, Mazzarella and Wolf~\cite{HeinosaariMW:11}, who used embedding theorems from differential geometry to establish the necessity of $(4+o(1))M$ measurements.
As far as stability is concerned, the most noteworthy achievement to date is due to Eldar and Mendelson~\cite{EldarM:12}, who proved that $\mathcal{O}(M)$ Gaussian random measurements separate distant $M$-dimensional real signals with high probability.
Still, the following problem remains wide open:

\begin{problem}
What are the necessary and sufficient conditions for measurement vectors to yield injective and stable intensity measurements?
\end{problem}

The present paper addresses this problem in a number of ways.
Section~2 focuses on injectivity, and it starts by providing the first known characterization of injectivity in the complex case (Theorem~\ref{thm.complex injective}).
Next, we make a rather surprising identification: that intensity measurements are injective in the complex case precisely when the corresponding phase-only measurements are injective in some sense (Theorem~\ref{thm.complex phase only}).
We then use this identification to prove the necessity of the complement property for injectivity (Theorem~\ref{thm.complex complement property}).
Later, we conjecture that $4M-4$ intensity measurements are necessary and sufficient for injectivity in the complex case, and we prove this conjecture in the cases where $M=2,3$ (Theorems~\ref{thm.4M4 true 2} and~\ref{thm.4M4 true 3}).
Our proof for the $M=3$ case leverages a new test for injectivity, which we then use to verify the injectivity of a certain quantum-mechanics-inspired measurement ensemble, thereby suggesting a new refinement of Wright's conjecture from~\cite{Vogt:78} (see Conjecture~\ref{conjecture.new wrights}). 

We devote Section~3 to stability.
Here, we start by focusing on the real case, for which we give upper and lower Lipschitz bounds of the intensity measurement mapping in terms of singular values of submatrices of the measurement ensemble (Lemma~\ref{lemma.beta expression} and Theorem~\ref{thm.bounding alpha}); this suggests a new matrix condition called the \textit{strong complement property}, which strengthens the complement property of Balan et al.~\cite{BalanCE:06} and bears some resemblance to the restricted isometry property of compressed sensing~\cite{Candes:08}.
As we will discuss, our result corroborates the intuition that localized frames fail to yield stability.
We then show that Gaussian random measurements satisfy the strong complement property with high probability (Theorem~\ref{theorem.random scp bound}), which nicely complements the results of Eldar and Mendelson~\cite{EldarM:12}.
In particular, we find an explicit, intuitive relation between the Lipschitz bounds and the number of intensity measurements per dimension (see Figure~\ref{figure.curves}.
Finally, we present results in both the real and complex cases using a stochastic noise model, much like Balan did for the real case in~\cite{Balan:12}; here, we leverage Cramer-Rao lower bounds to identify stability with stronger versions of the injectivity characterizations (see Theorems~\ref{thm.real random stability} and~\ref{thm.complex random stability}).

\subsection{Notation}

Given a collection of measurement vectors $\Phi=\{\varphi_n\}_{n=1}^N$ in $V=\mathbb{R}^M$ or $\mathbb{C}^M$, consider the intensity measurement process defined by 
\begin{equation*}
(\mathcal{A}(x))(n):=|\langle x,\varphi_n\rangle|^2.
\end{equation*}
Note that $\mathcal{A}(x)=\mathcal{A}(y)$ whenever $y=cx$ for some scalar $c$ of unit modulus.
As such, the mapping $\mathcal{A}\colon V\rightarrow\mathbb{R}^N$ is necessarily not injective.
To resolve this (technical) issue, throughout this paper, we consider sets of the form $V/S$, where $V$ is a vector space and $S$ is a multiplicative subgroup of the field of scalars.
By this notation, we mean to identify vectors $x,y\in V$ for which there exists a scalar $c\in S$ such that $y=cx$; we write $y\equiv x\bmod S$ to convey this identification.
Most (but not all) of the time, $V/S$ is either $\mathbb{R}^M/\{\pm1\}$ or $\mathbb{C}^M/\mathbb{T}$ (here, $\mathbb{T}$ is the complex unit circle), and we view the intensity measurement process as a mapping $\mathcal{A}\colon V/S\rightarrow\mathbb{R}^N$; it is in this way that we will consider the measurement process to be injective or stable.

\section{Injectivity}

\subsection{Injectivity and the complement property}

Phase retrieval is impossible without injective intensity measurements.
In their seminal work on phase retrieval~\cite{BalanCE:06}, Balan, Casazza and Edidin introduce the following property to analyze injectivity:

\begin{definition}
We say $\Phi=\{\varphi_n\}_{n=1}^N$ in $\mathbb{R}^M$ ($\mathbb{C}^M$) satisfies the \textit{complement property (CP)} if for every $S\subseteq\{1,\ldots,N\}$, either $\{\varphi_n\}_{n\in S}$ or $\{\varphi_n\}_{n\in S^\mathrm{c}}$ spans $\mathbb{R}^M$ ($\mathbb{C}^M$).
\end{definition}

In the real case, the complement property is characteristic of injectivity, as demonstrated in~\cite{BalanCE:06}.
We provide the proof of this result below; it contains several key insights which we will apply throughout this paper.

\begin{theorem}
\label{thm.complement property characterization}
Consider $\Phi=\{\varphi_n\}_{n=1}^N\subseteq\mathbb{R}^M$ and the mapping $\mathcal{A}\colon\mathbb{R}^M/\{\pm1\}\rightarrow\mathbb{R}^N$ defined by $(\mathcal{A}(x))(n):=|\langle x,\varphi_n\rangle|^2$. 
Then $\mathcal{A}$ is injective if and only if $\Phi$ satisfies the complement property.
\end{theorem}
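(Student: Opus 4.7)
The plan is to exploit the classical algebraic identity $|a|^2 = |b|^2$ iff $a = \pm b$ (in the real case) and reduce the equality $\mathcal{A}(x) = \mathcal{A}(y)$ to a sign-partition of the measurement indices. Specifically, $\mathcal{A}(x)=\mathcal{A}(y)$ is equivalent to the existence of a subset $S\subseteq\{1,\ldots,N\}$ such that $\langle x-y,\varphi_n\rangle=0$ for every $n\in S$ and $\langle x+y,\varphi_n\rangle=0$ for every $n\in S^\mathrm{c}$. So the whole theorem boils down to translating between the pair $(x-y,x+y)$ and the pair of orthogonal complements $(\mathrm{span}\{\varphi_n\}_{n\in S})^\perp$ and $(\mathrm{span}\{\varphi_n\}_{n\in S^\mathrm{c}})^\perp$.

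For the forward direction, I would argue the contrapositive: suppose CP fails, so there exists $S$ with neither $\{\varphi_n\}_{n\in S}$ nor $\{\varphi_n\}_{n\in S^\mathrm{c}}$ spanning $\mathbb{R}^M$. Pick nonzero $u$ in the first orthogonal complement and nonzero $v$ in the second, and set $x=\tfrac{1}{2}(u+v)$, $y=\tfrac{1}{2}(v-u)$. Then $x-y=u$ and $x+y=v$, which immediately give $\langle x,\varphi_n\rangle=\pm\langle y,\varphi_n\rangle$ (with the sign determined by whether $n\in S$), so $\mathcal{A}(x)=\mathcal{A}(y)$. Nonvanishing of $u$ and $v$ guarantees $x\not\equiv\pm y \bmod \{\pm 1\}$, so $\mathcal{A}$ fails to be injective on $\mathbb{R}^M/\{\pm 1\}$.

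For the converse, assume $\mathcal{A}(x)=\mathcal{A}(y)$ with $x\not\equiv \pm y$. Let $S:=\{n:\langle x,\varphi_n\rangle=\langle y,\varphi_n\rangle\}$ and $S^\mathrm{c}:=\{n:\langle x,\varphi_n\rangle=-\langle y,\varphi_n\rangle\}$; this partition is well defined because $|\langle x,\varphi_n\rangle|=|\langle y,\varphi_n\rangle|$. Then $x-y\ne 0$ is orthogonal to $\{\varphi_n\}_{n\in S}$ and $x+y\ne 0$ is orthogonal to $\{\varphi_n\}_{n\in S^\mathrm{c}}$, so neither subfamily spans $\mathbb{R}^M$, contradicting CP.

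The only subtle points I expect are purely bookkeeping: making sure the passage from $\mathcal{A}(x)=\mathcal{A}(y)$ to a sign partition really uses only the real-valuedness of the inner products (this is precisely why the argument will not directly extend to the complex case and motivates the later sections of the paper), and verifying that the constructed pair $(x,y)$ lies in distinct equivalence classes of $\mathbb{R}^M/\{\pm 1\}$ rather than merely being distinct in $\mathbb{R}^M$. Neither is a genuine obstacle, but the proof should be written so the reader sees the symmetric role of $x-y$ and $x+y$, since that bijection between $(x,y)$-pairs and $(u,v)$-pairs is the real content of the argument and is invoked repeatedly later in the paper.
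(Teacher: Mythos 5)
Your proposal is correct and follows essentially the same argument as the paper: both directions proceed via the same sign-partition $S$ and the same correspondence between the pair $(x-y,x+y)$ and vectors annihilated by $\{\varphi_n\}_{n\in S}$ and $\{\varphi_n\}_{n\in S^{\mathrm{c}}}$, differing only in harmless normalization (the factor $\tfrac{1}{2}$) and the sign convention used to define $S$. No gaps.
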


\begin{proof}
We will prove both directions by obtaining the contrapositives.

($\Rightarrow$) 
Assume that $\Phi$ is not CP. 
Then there exists $S\subseteq\{1,\ldots,N\}$ such that neither $\{\varphi_n\}_{n\in S}$ nor $\{\varphi_n\}_{n\in S^\mathrm{c}}$ spans $\mathbb{R}^M$. 
This implies that there are nonzero vectors $u,v\in\mathbb{R}^M$ such that $\langle u,\varphi_n\rangle=0$ for all $n\in S$ and $\langle v,\varphi_n\rangle=0$ for all $n\in S^\mathrm{c}$.
For each $n$, we then have
\begin{equation*}
|\langle u\pm v,\varphi_n\rangle|^2
=|\langle u,\varphi_n\rangle|^2\pm2\operatorname{Re}\langle u,\varphi_n\rangle\overline{\langle v,\varphi_n\rangle}+|\langle v,\varphi_n\rangle|^2
=|\langle u,\varphi_n\rangle|^2+|\langle v,\varphi_n\rangle|^2.
\end{equation*}
Since $|\langle u+v,\varphi_n\rangle|^2=|\langle u-v,\varphi_n\rangle|^2$ for every $n$, we have $\mathcal{A}(u+v)=\mathcal{A}(u-v)$. 
Moreover, $u$ and $v$ are nonzero by assumption, and so $u+v\neq\pm(u-v)$.

($\Leftarrow$) 
Assume that $\mathcal{A}$ is not injective. 
Then there exist vectors $x,y\in\mathbb{R}^M$ such that $x\neq\pm y$ and $\mathcal{A}(x)=\mathcal{A}(y)$. 
Taking $S:=\{n:\langle x,\varphi_n\rangle=-\langle y,\varphi_n\rangle\}$, we have $\langle x+y,\varphi_n\rangle=0$ for every $n\in S$.
Otherwise when $n\in S^\mathrm{c}$, we have $\langle x,\varphi_n\rangle=\langle y,\varphi_n\rangle$ and so $\langle x-y,\varphi_n\rangle=0$.
Furthermore, both $x+y$ and $x-y$ are nontrivial since $x\neq\pm y$, and so neither $\{\varphi_n\}_{n\in S}$ nor $\{\varphi_n\}_{n\in S^\mathrm{c}}$ spans $\mathbb{R}^M$.
\qquad
\end{proof}

Note that~\cite{BalanCE:06} erroneously stated that the first part of the above proof also gives that CP is necessary for injectivity in the complex case; the reader is encouraged to spot the logical error.
We wait to identify the error later in this section so as to avoid spoilers.
We will also give a correct proof of the result in question.
In the meantime, let's characterize injectivity in the complex case:

\begin{theorem}
\label{thm.complex injective}
Consider $\Phi=\{\varphi_n\}_{n=1}^N\subseteq\mathbb{C}^M$ and the mapping $\mathcal{A}\colon\mathbb{C}^M/\mathbb{T}\rightarrow\mathbb{R}^N$ defined by $(\mathcal{A}(x))(n):=|\langle x,\varphi_n\rangle|^2$.
Viewing $\{\varphi_n\varphi_n^* u\}_{n=1}^N$ as vectors in $\mathbb{R}^{2M}$, denote $S(u):=\operatorname{span}_\mathbb{R}\{\varphi_n\varphi_n^* u\}_{n=1}^N$.
Then the following are equivalent:
\begin{itemize}
\item[(a)] $\mathcal{A}$ is injective.
\item[(b)] $\operatorname{dim}S(u)\geq 2M-1$ for every $u\in\mathbb{C}^M\setminus\{0\}$.
\item[(c)] $S(u)=\operatorname{span}_\mathbb{R}\{\mathrm{i}u\}^\perp$ for every $u\in\mathbb{C}^M\setminus\{0\}$.
\end{itemize}
\end{theorem}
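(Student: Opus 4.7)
My plan is to anchor the proof at (c), proving (b) $\Leftrightarrow$ (c) by a short dimension count built on a universal containment, and proving (a) $\Leftrightarrow$ (c) by the linearizing substitution $u = x+y$, $v = x-y$, which converts the equation $\mathcal{A}(x) = \mathcal{A}(y)$ into a real-orthogonality condition between $v$ and $S(u)$.

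For (b) $\Leftrightarrow$ (c): using the standard identification $\langle a, b\rangle_{\mathbb{R}} = \operatorname{Re}(b^*a)$ of the real inner product on $\mathbb{C}^M \cong \mathbb{R}^{2M}$, I compute
\begin{equation*}
\langle \varphi_n\varphi_n^* u,\, \mathrm{i}u\rangle_{\mathbb{R}} = \operatorname{Re}\bigl((\mathrm{i}u)^*\varphi_n\varphi_n^* u\bigr) = \operatorname{Re}\bigl(-\mathrm{i}|\varphi_n^* u|^2\bigr) = 0
\end{equation*}
for every $n$, the last equality holding because $|\varphi_n^* u|^2$ is real. Hence $S(u) \subseteq \{\mathrm{i}u\}^\perp$, and for $u \ne 0$ this ambient space is a real hyperplane of dimension $2M-1$. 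The lower bound in (b) therefore forces $S(u) = \{\mathrm{i}u\}^\perp$, which is (c); the converse implication is immediate.

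For (a) $\Leftrightarrow$ (c): with $x = (u+v)/2$ and $y = (u-v)/2$, the identity $xx^* - yy^* = \tfrac{1}{2}(uv^* + vu^*)$ yields
\begin{equation*}
|\langle x, \varphi_n\rangle|^2 - |\langle y, \varphi_n\rangle|^2 = \operatorname{Re}\bigl(v^* \varphi_n \varphi_n^* u\bigr) = \langle \varphi_n \varphi_n^* u,\, v\rangle_{\mathbb{R}},
\end{equation*}
so $\mathcal{A}(x) = \mathcal{A}(y)$ is equivalent to $v \in S(u)^\perp$. On the other hand, the condition $y = cx$ with $|c| = 1$ translates to $(1-c)u = (1+c)v$, and writing $c = e^{\mathrm{i}\theta}$ (so that $(1-c)/(1+c) = -\mathrm{i}\tan(\theta/2)$ whenever $c \ne -1$) shows that $x \equiv y \bmod \mathbb{T}$ holds precisely when $u = 0$ or $v \in \mathbb{R}\mathrm{i}u$. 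Hence $\mathcal{A}$ fails to be injective iff some nonzero $u$ admits a vector $v \in S(u)^\perp \setminus \mathbb{R}\mathrm{i}u$, which, in view of $S(u) \subseteq \{\mathrm{i}u\}^\perp$, is precisely the negation of (c).

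I do not foresee a deep obstacle: once the parameterization and the containment are in place, the argument is mechanical. The only point demanding care is the bookkeeping between complex and real inner products, together with the M\"obius-type observation that $(1-c)/(1+c)$ is purely imaginary when $|c| = 1$. This is exactly what explains the appearance of the one-dimensional line $\mathbb{R}\mathrm{i}u$ (in place of $\{0\}$) in (c), and it is the one spot where the complex case genuinely diverges from the real one.
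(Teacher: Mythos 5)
Your proof is correct and follows essentially the same route as the paper's: the same containment $S(u)\subseteq\{\mathrm{i}u\}^\perp$ for the dimension count in (b) $\Leftrightarrow$ (c), the same polarization identity (your $xx^*-yy^*=\tfrac{1}{2}(uv^*+vu^*)$ is the paper's $|\langle u+v,\varphi_n\rangle|^2-|\langle u-v,\varphi_n\rangle|^2=4\langle\varphi_n\varphi_n^*u,v\rangle_\mathbb{R}$ in disguise), and the same M\"obius observation that $(1-c)/(1+c)$ is purely imaginary on the unit circle. The only difference is cosmetic: you package (a) $\Leftrightarrow$ (c) as a single biconditional via the witness formulation, where the paper proves the two implications separately.
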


Before proving this theorem, note that unlike the characterization in the real case, it is not clear whether this characterization can be tested in finite time; instead of being a statement about all (finitely many) partitions of $\{1,\ldots,N\}$, this is a statement about all $u\in\mathbb{C}^M\setminus\{0\}$.
However, we can view this characterization as an analog to the real case in some sense:
In the real case, the complement property is equivalent to having $\operatorname{span}\{\varphi_n\varphi_n^* u\}_{n=1}^N=\mathbb{R}^M$ for all $u\in\mathbb{R}^M\setminus\{0\}$.
As the following proof makes precise, the fact that $\{\varphi_n\varphi_n^* u\}_{n=1}^N$ fails to span all of $\mathbb{R}^{2M}$ is rooted in the fact that more information is lost with phase in the complex case.

\begin{proof}[Proof of Theorem~\ref{thm.complex injective}]
(a) $\Rightarrow$ (c): 
Suppose $\mathcal{A}$ is injective.
We need to show that $\{\varphi_n\varphi_n^* u\}_{n=1}^N$ spans the set of vectors orthogonal to $\mathrm{i}u$.
Here, orthogonality is with respect to the real inner product, which can be expressed as $\langle a,b\rangle_\mathbb{R}=\operatorname{Re}\langle a,b\rangle$.
Note that
\begin{equation*}
|\langle u\pm v,\varphi_n\rangle|^2
=|\langle u,\varphi_n\rangle|^2\pm2\operatorname{Re}\langle u,\varphi_n\rangle\langle \varphi_n,v\rangle+|\langle v,\varphi_n\rangle|^2,
\end{equation*}
and so subtraction gives
\begin{equation}
\label{eq.injectivity to orthogonality}
|\langle u+v,\varphi_n\rangle|^2-|\langle u-v,\varphi_n\rangle|^2
=4\operatorname{Re}\langle u,\varphi_n\rangle\langle \varphi_n,v\rangle
=4\langle \varphi_n\varphi_n^*u,v\rangle_\mathbb{R}.
\end{equation}
In particular, if the right-hand side of \eqref{eq.injectivity to orthogonality} is zero, then injectivity implies that there exists some $\omega$ of unit modulus such that $u+v=\omega(u-v)$.
Since $u\neq0$, we know $\omega\neq-1$, and so rearranging gives
\begin{equation*}
v
=-\frac{1-\omega}{1+\omega}u
=-\frac{(1-\omega)(1+\overline{\omega})}{|1+\omega|^2}u
=-\frac{2\operatorname{Im}\omega}{|1+\omega|^2}~\mathrm{i}u.
\end{equation*}
This means $S(u)^\perp\subseteq\operatorname{span}_\mathbb{R}\{\mathrm{i}u\}$.
To prove $\operatorname{span}_\mathbb{R}\{\mathrm{i}u\}\subseteq S(u)^\perp$, take $v=\alpha \mathrm{i}u$ for some $\alpha\in\mathbb{R}$ and define $\omega:=\frac{1+\alpha \mathrm{i}}{1-\alpha \mathrm{i}}$, which necessarily has unit modulus.
Then
\begin{equation*}
u+v
=u+\alpha \mathrm{i} u
=(1+\alpha \mathrm{i})u
=\frac{1+\alpha \mathrm{i}}{1-\alpha \mathrm{i}}(u-\alpha \mathrm{i}u)
=\omega(u-v).
\end{equation*}
Thus, the left-hand side of \eqref{eq.injectivity to orthogonality} is zero, meaning $v\in S(u)^\perp$.

(b) $\Leftrightarrow$ (c):
First, (b) immediately follows from (c).
For the other direction, note that $\mathrm{i}u$ is necessarily orthogonal to every $\varphi_n\varphi_n^* u$:
\begin{equation*}
\langle \varphi_n\varphi_n^* u,\mathrm{i}u\rangle_\mathbb{R}
=\operatorname{Re}\langle \varphi_n\varphi_n^* u,\mathrm{i}u\rangle
=\operatorname{Re}\langle u,\varphi_n\rangle\langle \varphi_n,\mathrm{i}u\rangle
=-\operatorname{Re}\mathrm{i}|\langle u,\varphi_n\rangle|^2
=0.
\end{equation*}
Thus, $\operatorname{span}_\mathbb{R}\{\mathrm{i}u\}\subseteq S(u)^\perp$, and by (b), $\operatorname{dim}S(u)^\perp\leq1$, both of which gives (c).

(c) $\Rightarrow$ (a):
This portion of the proof is inspired by Mukherjee's analysis in~\cite{Mukherjee:81}.
Suppose $\mathcal{A}(x)=\mathcal{A}(y)$.
If $x=y$, we are done.
Otherwise, $x-y\neq0$, and so we may apply (c) to $u=x-y$.
First, note that 
\begin{equation*}
\langle \varphi_n\varphi_n^*(x-y),x+y\rangle_\mathbb{R}
=\operatorname{Re}\langle \varphi_n\varphi_n^*(x-y),x+y\rangle
=\operatorname{Re}(x+y)^*\varphi_n\varphi_n^*(x-y),
\end{equation*}
and so expanding gives
\begin{equation*}
\langle \varphi_n\varphi_n^*(x-y),x+y\rangle_\mathbb{R}
=\operatorname{Re}\Big(|\varphi_n^*x|^2-x^*\varphi_n\varphi_n^*y+y^*\varphi_n\varphi_n^*x-|\varphi_n^*y|^2\Big)
=\operatorname{Re}\Big(-x^*\varphi_n\varphi_n^*y+\overline{x^*\varphi_n\varphi_n^*y}\Big)
=0.
\end{equation*}
Since $x+y\in S(x-y)^\perp=\operatorname{span}_\mathbb{R}\{\mathrm{i}(x-y)\}$, there exists $\alpha\in\mathbb{R}$ such that $x+y=\alpha \mathrm{i}(x-y)$, and so rearranging gives $y=\frac{1-\alpha \mathrm{i}}{1+\alpha \mathrm{i}}x$, meaning $y\equiv x\bmod\mathbb{T}$.
\qquad
\end{proof}

The above theorem leaves a lot to be desired; it is still unclear what it takes for a complex ensemble to yield injective intensity measurements.
While in pursuit of a more clear understanding, we established the following bizarre characterization:
A complex ensemble yields injective intensity measurements precisely when it yields injective \textit{phase-only} measurements (in some sense).
This is made more precise in the following theorem statement:

\begin{theorem}
\label{thm.complex phase only}
Consider $\Phi=\{\varphi_n\}_{n=1}^N\subseteq\mathbb{C}^M$ and the mapping $\mathcal{A}\colon\mathbb{C}^M/\mathbb{T}\rightarrow\mathbb{R}^N$ defined by $(\mathcal{A}(x))(n):=|\langle x,\varphi_n\rangle|^2$.
Then $\mathcal{A}$ is injective if and only if the following statement holds: 
If for every $n=1,\ldots,N$, either $\operatorname{arg}(\langle x,\varphi_n\rangle^2)=\operatorname{arg}(\langle y,\varphi_n\rangle^2)$ or one of the sides is not well-defined, then $x=0$, $y=0$, or $y\equiv x\bmod\mathbb{R}\setminus\{0\}$. 
\end{theorem}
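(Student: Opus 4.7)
The plan is to first rewrite the phase condition in a form that fits the framework of Theorem~\ref{thm.complex injective}. For nonzero $z,w\in\mathbb{C}$, the equality $\arg(z^2)=\arg(w^2)$ is equivalent to $z/w\in\mathbb{R}\setminus\{0\}$, i.e.\ $z\overline{w}\in\mathbb{R}$; and this latter condition automatically holds when $z=0$ or $w=0$. Thus the hypothesis in the theorem statement is equivalent to
\begin{equation*}
\operatorname{Im}\bigl(\langle x,\varphi_n\rangle\overline{\langle y,\varphi_n\rangle}\bigr)=0 \quad\text{for every }n=1,\ldots,N.
\end{equation*}
Since $\langle x,\varphi_n\rangle\overline{\langle y,\varphi_n\rangle}=y^*\varphi_n\varphi_n^*x=\langle\varphi_n\varphi_n^*x,y\rangle$, and since $\operatorname{Im}\langle a,b\rangle=\operatorname{Re}\langle a,\mathrm{i}b\rangle=\langle a,\mathrm{i}b\rangle_\mathbb{R}$, the phase condition is exactly
\begin{equation*}
\langle\varphi_n\varphi_n^*x,\mathrm{i}y\rangle_\mathbb{R}=0 \quad\text{for every }n, \qquad\text{i.e.,}\qquad \mathrm{i}y\in S(x)^\perp.
\end{equation*}

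For the forward direction ($\Rightarrow$), assume $\mathcal{A}$ is injective and that $x,y$ are nonzero and satisfy the phase condition. By Theorem~\ref{thm.complex injective}(c), $S(x)^\perp=\operatorname{span}_\mathbb{R}\{\mathrm{i}x\}$, so $\mathrm{i}y=\alpha \mathrm{i}x$ for some real $\alpha$, and $y\neq 0$ forces $\alpha\neq 0$. Hence $y\equiv x\bmod\mathbb{R}\setminus\{0\}$.

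For the reverse direction ($\Leftarrow$), first observe that the phase-only property forces $\Phi$ to span $\mathbb{C}^M$: otherwise some nonzero $u$ is orthogonal (in the complex sense) to every $\varphi_n$, and then $(u,\mathrm{i}u)$ vacuously satisfies the phase condition while $\mathrm{i}u\not\equiv u\bmod\mathbb{R}\setminus\{0\}$. Now suppose $\mathcal{A}(x)=\mathcal{A}(y)$. The cases $x=0$ or $y=0$ are immediate from spanning, so assume both are nonzero. The crucial step is to construct an auxiliary pair to which the phase-only property can be applied; the right choice is
\begin{equation*}
u:=x+y,\qquad v:=\mathrm{i}(x-y).
\end{equation*}
Using $(a+b)(\overline{a}-\overline{b})=|a|^2-|b|^2-2\mathrm{i}\operatorname{Im}(a\overline{b})$ together with $|\langle x,\varphi_n\rangle|=|\langle y,\varphi_n\rangle|$, a direct computation gives
\begin{equation*}
\langle u,\varphi_n\rangle\overline{\langle v,\varphi_n\rangle}=-2\operatorname{Im}\bigl(\langle x,\varphi_n\rangle\overline{\langle y,\varphi_n\rangle}\bigr)\in\mathbb{R}
\end{equation*}
for every $n$, so $(u,v)$ satisfies the phase condition. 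The phase-only hypothesis then yields $u=0$, $v=0$, or $v=cu$ for some $c\in\mathbb{R}\setminus\{0\}$. The first two give $y=\pm x$, and in the third, rearranging $\mathrm{i}(x-y)=c(x+y)$ produces $y=\frac{\mathrm{i}-c}{\mathrm{i}+c}x$; since $|\mathrm{i}-c|=|\mathrm{i}+c|=\sqrt{1+c^2}$, the scalar has unit modulus. In every case $y\equiv x\bmod\mathbb{T}$, proving injectivity of $\mathcal{A}$.

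The main obstacle is spotting the asymmetric pair $(u,v)=(x+y,\mathrm{i}(x-y))$ in the reverse direction: the factor of $\mathrm{i}$ is what converts the identity $|\langle x,\varphi_n\rangle|^2=|\langle y,\varphi_n\rangle|^2$ into the reality of $\langle u,\varphi_n\rangle\overline{\langle v,\varphi_n\rangle}$. Without that factor the analogous product is pure imaginary, and the phase condition fails rather than holds. Everything else is routine manipulation, with the forward direction essentially immediate once the phase condition has been rewritten as $\mathrm{i}y\in S(x)^\perp$.
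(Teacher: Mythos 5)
Your proof is correct and follows essentially the same route as the paper's: the key step in both is the identification of the phase condition with $\operatorname{Im}\langle x,\varphi_n\rangle\overline{\langle y,\varphi_n\rangle}=0$, i.e., $\mathrm{i}y\in S(x)^\perp$, after which the forward direction is exactly Theorem~\ref{thm.complex injective}, (a)$\Rightarrow$(c). Your reverse direction via the pair $(x+y,\mathrm{i}(x-y))$ is a repackaging of the paper's proof of (c)$\Rightarrow$(a) of that same theorem---the reality of $\langle x+y,\varphi_n\rangle\overline{\langle\mathrm{i}(x-y),\varphi_n\rangle}$ is precisely the vanishing of $\operatorname{Re}\langle\varphi_n\varphi_n^*(x-y),x+y\rangle$---so the content is the same even though you inline that argument rather than cite it.
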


\begin{proof}
By Theorem~\ref{thm.complex injective}, $\mathcal{A}$ is injective if and only if
\begin{equation}
\label{eq.injective equivalence 1}
\forall x\in\mathbb{C}^M\setminus\{0\}, \qquad\operatorname{span}_\mathbb{R}\{\varphi_n\varphi_n^*x\}_{n=1}^N=\operatorname{span}_\mathbb{R}\{\mathrm{i}x\}^{\perp}.
\end{equation}
Taking orthogonal complements of both sides, note that regardless of $x\in\mathbb{C}^M\setminus\{0\}$, we know $\operatorname{span}_\mathbb{R}\{\mathrm{i}x\}$ is necessarily a subset of $(\operatorname{span}_\mathbb{R}\{\varphi_n\varphi_n^*x\}_{n=1}^N)^\perp$, and so \eqref{eq.injective equivalence 1} is equivalent to
\begin{equation*}
\forall x\in\mathbb{C}^M\setminus\{0\},\qquad\operatorname{Re}\langle\varphi_n\varphi_n^*x,\mathrm{i}y\rangle=0\quad\forall n=1,\ldots,N\quad\Longrightarrow\quad y=0 \text{ or } y\equiv x\bmod\mathbb{R}\setminus\{0\}.
\end{equation*}
Thus, we need to determine when $\operatorname{Im}\langle x,\varphi_n\rangle\overline{\langle y,\varphi_n\rangle}=\operatorname{Re}\langle\varphi_n\varphi_n^*x,\mathrm{i}y\rangle=0$.
We claim that this is true if and only if $\operatorname{arg}(\langle x,\varphi_n\rangle^2)=\operatorname{arg}(\langle y,\varphi_n\rangle^2)$ or one of the sides is not well-defined.
To see this, we substitute $a:=\langle x,\varphi_n\rangle$ and $b:=\langle y,\varphi_n\rangle$.
Then to complete the proof, it suffices to show that $\operatorname{Im}a\overline{b}=0$ if and only if $\operatorname{arg}(a^2)=\operatorname{arg}(b^2)$, $a=0$, or $b=0$.

($\Leftarrow$)
If either $a$ or $b$ is zero, the result is immediate.
Otherwise, if $2\operatorname{arg}(a)=\operatorname{arg}(a^2)=\operatorname{arg}(b^2)=2\operatorname{arg}(b)$, then $2\pi$ divides $2(\operatorname{arg}(a)-\operatorname{arg}(b))$, and so $\operatorname{arg}(a\overline{b})=\operatorname{arg}(a)-\operatorname{arg}(b)$ is a multiple of $\pi$.
This implies that $a\overline{b}\in\mathbb{R}$, and so $\operatorname{Im}a\overline{b}=0$.

($\Rightarrow$)
Suppose $\operatorname{Im}a\overline{b}=0$.
Taking the polar decompositions $a=re^{\mathrm{i}\theta}$ and $b=se^{\mathrm{i}\phi}$, we equivalently have that $rs\sin{(\theta-\phi)}=0$.
Certainly, this can occur whenever $r$ or $s$ is zero, i.e., $a=0$ or $b=0$.
Otherwise, a difference formula then gives $\sin{\theta}\cos{\phi}=\cos{\theta}\sin{\phi}$.
From this, we know that if $\theta$ is an integer multiple of $\pi/2$, then $\phi$ is as well, and vice versa, in which case $\operatorname{arg}(a^2)=2\operatorname{arg}(a)=\pi=2\operatorname{arg}(b)=\operatorname{arg}(b^2)$.
Else, we can divide both sides by $\cos{\theta}\cos{\phi}$ to obtain $\tan{\theta}=\tan{\phi}$, from which it is evident that $\theta\equiv\phi\bmod\pi$, and so $\operatorname{arg}(a^2)=2\operatorname{arg}(a)=2\operatorname{arg}(b)=\operatorname{arg}(b^2)$.
\qquad
\end{proof}

To be clear, it is unknown to the authors whether such phase-only measurements arrive in any application (nor whether a corresponding reconstruction algorithm is feasible), but we find it rather striking that injectivity in this setting is equivalent to injectivity in ours.
We will actually use this result to (correctly) prove the necessity of CP for injectivity.
First, we need the following lemma, which is interesting in its own right:

\begin{lemma}
\label{lem.injectivity of B}
Consider $\Phi=\{\varphi_n\}_{n=1}^N\subseteq\mathbb{C}^M$ and the mapping $\mathcal{A}\colon\mathbb{C}^M/\mathbb{T}\rightarrow\mathbb{R}^N$ defined by $(\mathcal{A}(x))(n):=|\langle x,\varphi_n\rangle|^2$.
If $\mathcal{A}$ is injective, then the mapping $\mathcal{B}\colon\mathbb{C}^M/\{\pm1\}\rightarrow\mathbb{C}^N$ defined by $(\mathcal{B}(x))(n):=\langle x,\varphi_n\rangle^2$ is also injective.
\end{lemma}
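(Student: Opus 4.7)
The plan is to reduce the injectivity of $\mathcal{B}$ directly to the injectivity of $\mathcal{A}$ by taking absolute values, and then to pin down the phase ambiguity supplied by $\mathcal{A}$. Suppose $\mathcal{B}(x) = \mathcal{B}(y)$, i.e.\ $\langle x,\varphi_n\rangle^2 = \langle y,\varphi_n\rangle^2$ for every $n$. Taking the modulus of both sides gives $|\langle x,\varphi_n\rangle|^2 = |\langle y,\varphi_n\rangle|^2$, so $\mathcal{A}(x) = \mathcal{A}(y)$. By the assumed injectivity of $\mathcal{A}$ on $\mathbb{C}^M/\mathbb{T}$, there exists some $\omega \in \mathbb{T}$ with $y = \omega x$.

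Substituting $y = \omega x$ back into $\langle y,\varphi_n\rangle^2 = \langle x,\varphi_n\rangle^2$ yields
\begin{equation*}
(\omega^2 - 1)\,\langle x,\varphi_n\rangle^2 = 0 \qquad \text{for every } n = 1,\ldots,N.
\end{equation*}
Now I would split into two cases. If $\omega^2 = 1$, then $\omega = \pm 1$, giving $y = \pm x$, which is exactly $y \equiv x \bmod \{\pm 1\}$ as required. Otherwise $\omega^2 \neq 1$, and the displayed equation forces $\langle x,\varphi_n\rangle = 0$ for all $n$.

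In this remaining case, I would invoke a short spanning argument: the injectivity of $\mathcal{A}$ implies that $\{\varphi_n\}_{n=1}^N$ spans $\mathbb{C}^M$. Indeed, if it did not, any nonzero $u$ orthogonal to all $\varphi_n$ would satisfy $\mathcal{A}(u) = 0 = \mathcal{A}(0)$ while $u \not\equiv 0 \bmod \mathbb{T}$, contradicting injectivity of $\mathcal{A}$. Thus $\langle x,\varphi_n\rangle = 0$ for all $n$ forces $x = 0$, hence $y = \omega x = 0$, and trivially $y \equiv x \bmod \{\pm 1\}$.

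I do not expect any serious obstacle here; the content is essentially that taking absolute values of $\mathcal{B}$ recovers $\mathcal{A}$, and that a unit-modulus scalar which preserves every squared inner product must be a square root of unity (modulo the degenerate case handled by the spanning observation). The only place where care is needed is not to overlook the zero-inner-product case, since injectivity of $\mathcal{A}$ is needed a second time to rule it out.
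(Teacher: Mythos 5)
Your proof is correct, but it takes a genuinely different (and more elementary) route than the paper. The paper derives the lemma by combining the injectivity of $\mathcal{A}$ with the phase-only injectivity characterization of Theorem~\ref{thm.complex phase only}: equality of the $\langle\cdot,\varphi_n\rangle^2$ forces equality of both moduli and arguments, so both characterizations apply, and the conclusion follows from $\mathbb{T}\cap(\mathbb{R}\setminus\{0\})=\{\pm1\}$. You instead use injectivity of $\mathcal{A}$ only once to obtain $y=\omega x$ with $\omega\in\mathbb{T}$, substitute back into the hypothesis to get $(\omega^2-1)\langle x,\varphi_n\rangle^2=0$ for all $n$, and split on whether $\omega^2=1$; the degenerate branch is closed by the observation that injectivity of $\mathcal{A}$ forces $\Phi$ to span $\mathbb{C}^M$, so $\langle x,\varphi_n\rangle=0$ for all $n$ gives $x=0$. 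Your argument is self-contained and avoids the machinery of Theorem~\ref{thm.complex phase only} entirely, which is a genuine simplification; what the paper's route buys is a demonstration that the (independently interesting) phase-only equivalence does real work downstream. Your attention to the zero-inner-product case is exactly the care the argument needs, and the spanning observation that handles it is sound.
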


\begin{proof}
Suppose $\mathcal{A}$ is injective.
Then we have the following facts (one by definition, and the other by Theorem~\ref{thm.complex phase only}):
\begin{itemize}
\item[(i)] If $\forall n=1,\ldots,N$, $|\langle x,\varphi_n\rangle|^2=|\langle y,\varphi_n\rangle|^2$, then $y\equiv x\bmod\mathbb{T}$.
\item[(ii)] If $\forall n=1,\ldots,N$, either $\operatorname{arg}(\langle x,\varphi_n\rangle^2)=\operatorname{arg}(\langle y,\varphi_n\rangle^2)$ or one of the sides is not well-defined, then $x=0$, $y=0$, or $y\equiv x\bmod\mathbb{R}\setminus\{0\}$.
\end{itemize}
Now suppose we have $\langle x,\varphi_n\rangle^2=\langle y,\varphi_n\rangle^2$ for all $n=1,\ldots,N$.
Then their moduli and arguments are also equal, and so (i) and (ii) both apply.
Of course, $y\equiv x\bmod\mathbb{T}$ implies $x=0$ if and only if $y=0$.
Otherwise both are nonzero, in which case there exists $\omega\in\mathbb{T}\cap\mathbb{R}\setminus\{0\}=\{\pm1\}$ such that $y=\omega x$.
In either case, $y\equiv x\bmod\{\pm1\}$, so $\mathcal{B}$ is injective.
\qquad
\end{proof}

\begin{theorem}
\label{thm.complex complement property}
Consider $\Phi=\{\varphi_n\}_{n=1}^N\subseteq\mathbb{C}^M$ and the mapping $\mathcal{A}\colon\mathbb{C}^M/\mathbb{T}\rightarrow\mathbb{R}^N$ defined by $(\mathcal{A}(x))(n):=|\langle x,\varphi_n\rangle|^2$.
If $\mathcal{A}$ is injective, then $\Phi$ satisfies the complement property.
\end{theorem}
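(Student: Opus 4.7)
The plan is to leverage Lemma~\ref{lem.injectivity of B} to replace $\mathcal{A}$ with the auxiliary mapping $\mathcal{B}$, and then run a polarization-style argument on $\mathcal{B}$ rather than on $\mathcal{A}$. The naive attempt—directly mimicking the proof of Theorem~\ref{thm.complement property characterization}—is to take $u,v\neq 0$ coming from the failure of CP and observe that $\operatorname{Re}\langle u,\varphi_n\rangle\overline{\langle v,\varphi_n\rangle}=0$ for every $n$ (since one of the two factors vanishes at each index), which yields $\mathcal{A}(u+v)=\mathcal{A}(u-v)$. This is presumably the step that~\cite{BalanCE:06} used, and it is also where the logical error lies: in the complex case one cannot conclude $u+v\not\equiv u-v\bmod\mathbb{T}$ from nonzeroness of $u$ and $v$ alone, because (as the proof of Theorem~\ref{thm.complex injective} made explicit) $u+v=\omega(u-v)$ for some $\omega\in\mathbb{T}$ precisely when $v$ is a real multiple of $\mathrm{i}u$, and the failure of CP puts no obstruction to such a configuration.

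To repair this, I would argue by contrapositive and work with $\mathcal{B}(x)(n):=\langle x,\varphi_n\rangle^2$ instead. Suppose $\Phi$ fails CP, so there is a partition $S\sqcup S^{\mathrm{c}}$ and nonzero $u,v\in\mathbb{C}^M$ with $\langle u,\varphi_n\rangle=0$ for $n\in S$ and $\langle v,\varphi_n\rangle=0$ for $n\in S^{\mathrm{c}}$. Since the inner product is linear in the first slot,
\begin{equation*}
\langle u\pm v,\varphi_n\rangle^2=\langle u,\varphi_n\rangle^2\pm 2\langle u,\varphi_n\rangle\langle v,\varphi_n\rangle+\langle v,\varphi_n\rangle^2,
\end{equation*}
and now the cross term $\langle u,\varphi_n\rangle\langle v,\varphi_n\rangle$ (crucially, \emph{without} a conjugate) vanishes for every $n$, because at every index one of its two factors is zero. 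Hence $\mathcal{B}(u+v)=\mathcal{B}(u-v)$.

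The finish is then automatic: by Lemma~\ref{lem.injectivity of B}, injectivity of $\mathcal{A}$ would imply injectivity of $\mathcal{B}$, forcing $u+v\equiv u-v\bmod\{\pm 1\}$, i.e.\ $u=0$ or $v=0$, contradicting our choice of $u,v$. Thus $\mathcal{A}$ cannot be injective, proving CP is necessary.

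The main obstacle is conceptual rather than computational: one must recognize that the correct polarization identity to polarize is the one for $\langle \cdot,\varphi_n\rangle^2$ rather than for $|\langle\cdot,\varphi_n\rangle|^2$, so that the cross term cancels \emph{algebraically} from $S\sqcup S^{\mathrm{c}}$ and one then ends up in the much tighter equivalence $\mathbb{C}^M/\{\pm 1\}$ where the ambiguity $u+v=\omega(u-v)$, $\omega\in\mathbb{T}$, cannot save the counterexample. Lemma~\ref{lem.injectivity of B} is precisely the bridge that makes this substitution legal, so once it is in hand the remaining content is just the cross-term bookkeeping above.
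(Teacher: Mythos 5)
Your proposal is correct and follows essentially the same route as the paper's proof: reduce to the mapping $\mathcal{B}$ via Lemma~\ref{lem.injectivity of B}, then run the contrapositive polarization argument on $\langle\cdot,\varphi_n\rangle^2$ so that the unconjugated cross term $\langle u,\varphi_n\rangle\langle v,\varphi_n\rangle$ vanishes on each index of the partition, yielding $\mathcal{B}(u+v)=\mathcal{B}(u-v)$ with $u+v\not\equiv u-v\bmod\{\pm1\}$. Your diagnosis of where the naive argument from the real case breaks down also matches the paper's discussion.
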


Before giving the proof, let's first divulge why the first part of the proof of Theorem~\ref{thm.complement property characterization} does not suffice: 
It demonstrates that $u+v\neq\pm(u-v)$, but fails to establish that $u+v\not\equiv u-v\bmod\mathbb{T}$; for instance, it could very well be the case that $u+v=\mathrm{i}(u-v)$, and so injectivity would not be violated \textit{in the complex case}.
Regardless, the following proof, which leverages the injectivity of $\mathcal{B}$ modulo $\{\pm1\}$, resolves this issue.

\begin{proof}[Proof of Theorem~\ref{thm.complex complement property}]
Recall that if $\mathcal{A}$ is injective, then so is the mapping $\mathcal{B}$ of Lemma~\ref{lem.injectivity of B}.
Therefore, it suffices to show that $\Phi$ is CP if $\mathcal{B}$ is injective.
To complete the proof, we will obtain the contrapositive (note the similarity to the proof of Theorem~\ref{thm.complement property characterization}).
Suppose $\Phi$ is not CP. 
Then there exists $S\subseteq\{1,\ldots,N\}$ such that neither $\{\varphi_n\}_{n\in S}$ nor $\{\varphi_n\}_{n\in S^\mathrm{c}}$ spans $\mathbb{C}^M$. 
This implies that there are nonzero vectors $u,v\in\mathbb{C}^M$ such that $\langle u,\varphi_n\rangle=0$ for all $n\in S$ and $\langle v,\varphi_n\rangle=0$ for all $n\in S^\mathrm{c}$.
For each $n$, we then have
\begin{equation*}
\langle u\pm v,\varphi_n\rangle^2
=\langle u,\varphi_n\rangle^2\pm2\langle u,\varphi_n\rangle\langle v,\varphi_n\rangle+\langle v,\varphi_n\rangle^2
=\langle u,\varphi_n\rangle^2+\langle v,\varphi_n\rangle^2.
\end{equation*}
Since $\langle u+v,\varphi_n\rangle^2=\langle u-v,\varphi_n\rangle^2$ for every $n$, we have $\mathcal{B}(u+v)=\mathcal{B}(u-v)$. 
Moreover, $u$ and $v$ are nonzero by assumption, and so $u+v\neq\pm(u-v)$.
\qquad
\end{proof}

Note that the complement property is necessary but not sufficient for injectivity.  
To see this, consider measurement vectors $(1,0)$, $(0,1)$ and $(1,1)$.  These certainly satisfy the complement property, but $\mathcal{A}((1,\mathrm{i}))=(1,1,2)=\mathcal{A}((1,-\mathrm{i}))$, despite the fact that $(1,\mathrm{i})\not\equiv(1,-\mathrm{i})\bmod\mathbb{T}$; in general, real measurement vectors fail to yield injective intensity measurements in the complex setting since they do not distinguish complex conjugates.
Indeed, we have yet to find a ``good'' sufficient condition for injectivity in the complex case.
As an analogy for what we really want, consider the notion of \textit{full spark}:
An ensemble $\{\varphi_n\}_{n=1}^N\subseteq\mathbb{R}^M$ is said to be full spark if every subcollection of $M$ vectors spans $\mathbb{R}^M$.
It is easy to see that full spark ensembles with $N\geq 2M-1$ necessarily satisfy the complement property (thereby implying injectivity in the real case), and furthermore, the notion of full spark is simple enough to admit deterministic constructions~\cite{AlexeevCM:12,PuschelK:05}.
Deterministic measurement ensembles are particularly desirable for the complex case, and so finding a good sufficient condition for injectivity is an important problem that remains open.

\subsection{Towards a rank-nullity theorem for phase retrieval}

If you think of a matrix $\Phi$ as being built one column at a time, then the rank-nullity theorem states that each column contributes to either the column space or the null space.
If the columns are then used as linear measurement vectors (say we take measurements $y=\Phi^*x$ of a vector $x$), then the column space of $\Phi$ gives the subspace that is actually sampled, and the null space captures the algebraic nature of the measurements' redundancy.
Therefore, an efficient sampling of an entire vector space would apply a matrix $\Phi$ with a small null space and large column space (e.g., an invertible square matrix).
How do we find such a sampling with intensity measurements?
The following makes this question more precise:
\
\begin{problem}
\label{prob.n star}
For any dimension $M$, what is the smallest number $N^*(M)$ of injective intensity measurements, and how do we design such measurement vectors?
\end{problem}

To be clear, this problem was completely solved in the real case by Balan, Casazza and Edidin~\cite{BalanCE:06}.
Indeed, Theorem~\ref{thm.complement property characterization} immediately implies that $2M-2$ intensity measurements are necessarily not injective, and furthermore that $2M-1$ measurements are injective if and only if the measurement vectors are full spark. 
As such, we will focus our attention to the complex case.

In the complex case, Problem~\ref{prob.n star} has some history in the quantum mechanics literature.
For example, \cite{Vogt:78} presents \textit{Wright's conjecture} that three observables suffice to uniquely determine any pure state.
In phase retrieval parlance, the conjecture states that there exist unitary matrices $U_1$, $U_2$ and $U_3$ such that $\Phi=[U_1~U_2~U_3]$ yields injective intensity measurements (here, the measurement vectors are the columns of $\Phi$).
Note that Wright's conjecture actually implies that $N^*(M)\leq 3M-2$; indeed, $U_1$ determines the norm (squared) of the signal, rendering the last column of both $U_2$ and $U_3$ unnecessary.
Finkelstein~\cite{Finkelstein:04} later proved that $N^*(M)\geq 3M-2$; combined with Wright's conjecture, this led many to believe that $N^*(M)=3M-2$ (for example, see~\cite{CandesESV:11}).
However, both this and Wright's conjecture were recently disproved in~\cite{HeinosaariMW:11}, in which Heinosaari, Mazzarella and Wolf invoked embedding theorems from differential geometry to prove that 
\begin{equation}
\label{eq.HMW lower bound}
N^*(M)\geq
\left\{
\begin{array}{ll}
4M-2\alpha(M-1)-3 &\mbox{for all } M\\
4M-2\alpha(M-1)-2 &\mbox{if } M\mbox{ is odd and }\alpha(M-1)=2\bmod 4\\
4M-2\alpha(M-1)-1 &\mbox{if } M\mbox{ is odd and }\alpha(M-1)=3\bmod 4,
\end{array}
\right.
\end{equation}
where $\alpha(M-1)\leq\log_2(M)$ is the number of $1$'s in the binary representation of $M-1$.
By comparison, Balan, Casazza and Edidin~\cite{BalanCE:06} proved that $N^*(M)\leq4M-2$, and so we at least have the asymptotic expression $N^*(M)=(4+o(1))M$.

At this point, we should clarify some intuition for $N^*(M)$ by explaining the nature of these best known lower and upper bounds.
First, the lower bound~\eqref{eq.HMW lower bound} follows from an older result that complex projective space $\mathbb{C}\mathbf{P}^n$ does not smoothly embed into $\mathbb{R}^{4n-2\alpha(n)}$ (and other slight refinements which depend on $n$); this is due to Mayer~\cite{Mayer:65}, but we highly recommend James's survey on the topic~\cite{James:71}.
To prove~\eqref{eq.HMW lower bound} from this, suppose $\mathcal{A}\colon\mathbb{C}^M/\mathbb{T}\rightarrow\mathbb{R}^N$ were injective.
Then $\mathcal{E}$ defined by $\mathcal{E}(x):=\mathcal{A}(x)/\|x\|^2$ embeds $\mathbb{C}\mathbf{P}^{M-1}$ into $\mathbb{R}^N$, and as Heinosaari et al.\ show, the embedding is necessarily smooth; considering $\mathcal{A}(x)$ is made up of rather simple polynomials, the fact that $\mathcal{E}$ is smooth should not come as a surprise.
As such, the nonembedding result produces the best known lower bound.
To evaluate this bound, first note that Milgram~\cite{Milgram:67} constructs an embedding of $\mathbb{C}\mathbf{P}^n$ into $\mathbb{R}^{4n-\alpha(n)+1}$, establishing the importance of the $\alpha(n)$ term, but the constructed embedding does not correspond to an intensity measurement process.
In order to relate these embedding results to our problem, consider the real case: 
It is known that for odd $n\geq7$, real projective space $\mathbb{R}\mathbf{P}^n$ smoothly embeds into $\mathbb{R}^{2n-\alpha(n)+1}$~\cite{Steer:70}, which means the analogous lower bound for the real case would necessarily be smaller than $2(M-1)-\alpha(M-1)+1=2M-\alpha(M-1)-1<2M-1$.
This indicates that the $\alpha(M-1)$ term in~\eqref{eq.HMW lower bound} might be an artifact of the proof technique, rather than of $N^*(M)$.

There is also some intuition to be gained from the upper bound $N^*(M)\leq4M-2$, which Balan et al.\ proved by applying certain techniques from algebraic geometry (some of which we will apply later in this section).
In fact, their result actually gives that $4M-2$ or more measurement vectors, if chosen \textit{generically}, will yield injective intensity measurements; here, generic is a technical term involving the Zariski topology, but it can be thought of as some undisclosed property which is satisfied with probability 1 by measurement vectors drawn from continuous distributions.
This leads us to think that $N^*(M)$ generic measurement vectors might also yield injectivity.

The lemma that follows will help to refine our intuition for $N^*(M)$, and it will also play a key role in the main theorems of this section (a similar result appears in~\cite{HeinosaariMW:11}).
Before stating the result, define the real $M^2$-dimensional space $\mathbb{H}^{M\times M}$ of self-adjoint $M\times M$ matrices; note that this is not a vector space over the complex numbers since the diagonal of a self-adjoint matrix must be real.
Given an ensemble of measurement vectors $\{\varphi_n\}_{n=1}^N\subseteq\mathbb{C}^M$, 
define the \textit{super analysis operator} $\mathbf{A}\colon\mathbb{H}^{M\times M}\rightarrow\mathbb{R}^N$ by $(\mathbf{A}H)(n)=\langle H,\varphi_n\varphi_n^*\rangle_\mathrm{HS}$; here, $\langle\cdot,\cdot\rangle_\mathrm{HS}$ denotes the Hilbert-Schmidt inner product, which induces the Frobenius matrix norm.
Note that $\mathbf{A}$ is a linear operator, and yet 
\begin{equation*}
(\mathbf{A}xx^*)(n)
=\langle xx^*,\varphi_n\varphi_n^*\rangle_\mathrm{HS}
=\operatorname{Tr}[\varphi_n\varphi_n^*xx^*]
=\operatorname{Tr}[\varphi_n^*xx^*\varphi_n]
=\varphi_n^*xx^*\varphi_n
=|\langle x,\varphi_n\rangle|^2
=(\mathcal{A}(x))(n).
\end{equation*}
In words, the class of vectors identified with $x$ modulo $\mathbb{T}$ can be ``lifted'' to $xx^*$, thereby linearizing the intensity measurement process at the price of squaring the dimension of the vector space of interest; this identification has been exploited by some of the most noteworthy strides in modern phase retrieval~\cite{BalanBCE:09,CandesSV:11}.
As the following lemma shows, this identification can also be used to characterize injectivity:

\begin{lemma}
\label{lem.not injective rank 2}
$\mathcal{A}$ is not injective if and only if there exists a matrix of rank $1$ or $2$ in the null space of $\mathbf{A}$.
\end{lemma}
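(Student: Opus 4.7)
The plan is to translate the question about injectivity of $\mathcal{A}$ into a linear-algebraic question about $\mathbf{A}$ via the lifting identity $(\mathbf{A}(xx^*))(n) = (\mathcal{A}(x))(n)$ recorded just before the lemma. From this identity it follows at once that $\mathcal{A}(x) = \mathcal{A}(y)$ if and only if the self-adjoint matrix $xx^* - yy^*$ lies in $\ker\mathbf{A}$, and this matrix automatically has rank at most $2$. So the real content of the lemma is to bridge cleanly between nonzero vectors in the domain of $\mathcal{A}$ and nonzero matrices of low rank in the null space of $\mathbf{A}$.

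For the forward direction, I would pick witnesses $x \not\equiv y \bmod \mathbb{T}$ for the failure of injectivity and take $H := xx^* - yy^*$; by the identity above, $H \in \ker\mathbf{A}$ and $\operatorname{rank}(H) \leq 2$. The only thing to verify is that $H$ is nonzero: if $xx^* = yy^*$, then their common range is either $\{0\}$ or one-dimensional, which forces $y = cx$ for some scalar $c$ (with $x \neq 0$, say), and then $|c|^2 xx^* = yy^* = xx^*$ gives $|c| = 1$, i.e.\ $y \equiv x \bmod \mathbb{T}$, contradicting the choice of witnesses.

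For the converse, I would take a nonzero $H \in \ker\mathbf{A}$ with $\operatorname{rank}(H) \in \{1,2\}$ and apply the spectral theorem to write $H = \lambda_1 u_1 u_1^* + \lambda_2 u_2 u_2^*$ with $u_1,u_2$ orthonormal and $\lambda_1,\lambda_2 \in \mathbb{R}$ (allowing $\lambda_2 = 0$ in the rank-$1$ case). The argument then splits on the signs of the nonzero eigenvalues. In the opposite-sign case (necessarily rank $2$), set $x = \sqrt{\lambda_1}\,u_1$ and $y = \sqrt{-\lambda_2}\,u_2$ so that $H = xx^* - yy^*$; since $x$ and $y$ are nonzero and orthogonal they are linearly independent, hence $y \not\equiv x \bmod \mathbb{T}$, and the lifting identity turns $\mathbf{A}H = 0$ into $\mathcal{A}(x) = \mathcal{A}(y)$. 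In the same-sign case, $\pm H$ is positive semidefinite and may be written as $xx^*$ or $xx^* + yy^*$ for nonzero $x$ (and orthogonal $y$, when applicable); the relation $\mathbf{A}(\pm H) = 0$ then reads $|\langle x,\varphi_n\rangle|^2 + |\langle y,\varphi_n\rangle|^2 = 0$ for every $n$, forcing $\mathcal{A}(x) = 0 = \mathcal{A}(0)$ with $x \neq 0$, so $\mathcal{A}$ is not injective.

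The entire argument is essentially a change of variables through the lift $x \mapsto xx^*$; the only mild obstacle is the same-sign subcase of the converse, where one has to recognize that a positive-semidefinite matrix in $\ker\mathbf{A}$ produces a nonzero vector orthogonal to every $\varphi_n$ and therefore certifies non-injectivity via a collision with the zero signal rather than with a second nonzero signal.
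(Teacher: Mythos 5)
Your proof is correct and follows essentially the same route as the paper's: lift via $x\mapsto xx^*$, use the spectral theorem on a low-rank null-space element, and split on the signs of the eigenvalues (the paper treats rank $1$ and the same-sign rank-$2$ case separately, whereas you merge them into a single collision with the zero signal, but the substance is identical). Your explicit check that $xx^*-yy^*\neq 0$ when $x\not\equiv y\bmod\mathbb{T}$ is a small point the paper leaves implicit, and it is welcome.
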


\begin{proof}
($\Rightarrow$)
If $\mathcal{A}$ is not injective, then there exist $x,y\in\mathbb{C}^M/\mathbb{T}$ with $x\nequiv y\bmod\mathbb{T}$ such that $\mathcal{A}(x)=\mathcal{A}(y)$.
That is, $\mathbf{A}xx^*=\mathbf{A}yy^*$, and so $xx^*-yy^*$ is in the null space of $\mathbf{A}$.

($\Leftarrow$)
First, suppose there is a rank-$1$ matrix $H$ in the null space of $\mathbf{A}$.
Then there exists $x\in\mathbb{C}^M$ such that $H=xx^*$ and $(\mathcal{A}(x))(n)=(\mathbf{A}xx^*)(n)=0=(\mathcal{A}(0))(n)$.
But $x\not\equiv0\bmod\mathbb{T}$, and so $\mathcal{A}$ is not injective.
Now suppose there is a rank-$2$ matrix $H$ in the null space of $\mathbf{A}$.
Then by the spectral theorem, there are orthonormal $u_1,u_2\in\mathbb{C}^M$ and nonzero $\lambda_1\geq\lambda_2$ such that $H=\lambda_1u_1u_1^*+\lambda_2u_2u_2^*$.
Since $H$ is in the null space of $\mathbf{A}$, the following holds for every $n$:
\begin{equation}
\label{eq.rank 2 injective}
0
=\langle H,\varphi_n\varphi_n^*\rangle_\mathrm{HS}
=\langle \lambda_1u_1u_1^*+\lambda_2u_2u_2^*,\varphi_n\varphi_n^*\rangle_\mathrm{HS}
=\lambda_1|\langle u_1,\varphi_n\rangle|^2+\lambda_2|\langle u_2,\varphi_n\rangle|^2.
\end{equation}
Taking $x:=|\lambda_1|^{1/2}u_1$ and $y:=|\lambda_2|^{1/2}u_2$, note that $y\not\equiv x\bmod\mathbb{T}$ since they are nonzero and orthogonal.
We claim that $\mathcal{A}(x)=\mathcal{A}(y)$, which would complete the proof.
If $\lambda_1$ and $\lambda_2$ have the same sign, then by \eqref{eq.rank 2 injective}, $|\langle x,\varphi_n\rangle|^2+|\langle y,\varphi_n\rangle|^2=0$ for every $n$, meaning $|\langle x,\varphi_n\rangle|^2=0=|\langle y,\varphi_n\rangle|^2$.
Otherwise, $\lambda_1>0>\lambda_2$, and so $xx^*-yy^*=\lambda_1u_1u_1^*+\lambda_2u_2u_2^*=A$ is in the null space of $\mathbf{A}$, meaning $\mathcal{A}(x)=\mathbf{A}xx^*=\mathbf{A}yy^*=\mathcal{A}(y)$.
\qquad
\end{proof}

Lemma~\ref{lem.not injective rank 2} indicates that we want the null space of $\mathbf{A}$ to avoid nonzero matrices of rank $\leq 2$.
Intuitively, this is easier when the ``dimension'' of this set of matrices is small.
To get some idea of this dimension, let's count real degrees of freedom.
By the spectral theorem, almost every matrix in $\mathbb{H}^{M\times M}$ of rank $\leq 2$ can be uniquely expressed as $\lambda_1u_1u_1^*+\lambda_2u_2u_2^*$ with $\lambda_1\leq\lambda_2$.
Here, $(\lambda_1,\lambda_2)$ has two degrees of freedom.
Next, $u_1$ can be any vector in $\mathbb{C}^M$, except its norm must be $1$.
Also, since $u_1$ is only unique up to global phase, we take its first entry to be nonnegative without loss of generality.
Given the norm and phase constraints, $u_1$ has a total of $2M-2$ real degrees of freedom.
Finally, $u_2$ has the same norm and phase constraints, but it must also be orthogonal to $u_1$, that is, $\operatorname{Re}\langle u_2,u_1\rangle=\operatorname{Im}\langle u_2,u_1\rangle=0$.
As such, $u_2$ has $2M-4$ real degrees of freedom.
All together, we can expect the set of matrices in question to have $2+(2M-2)+(2M-4)=4M-4$ real dimensions.

If the set $S$ of matrices of rank $\leq 2$ formed a subspace of $\mathbb{H}^{M\times M}$ (it doesn't), then we could expect the null space of $\mathbf{A}$ to intersect that subspace nontrivially whenever $\dim\operatorname{null}(\mathbf{A})+(4M-4)>\dim(\mathbb{H}^{M\times M})=M^2$.
By the rank-nullity theorem, this would indicate that injectivity requires 
\begin{equation}
\label{eq.4M-4 reasoning}
N
\geq\operatorname{rank}(\mathbf{A})
=M^2-\dim\operatorname{null}(\mathbf{A})
\geq 4M-4.
\end{equation}
Of course, this logic is not technically valid since $S$ is not a subspace.
It is, however, a special kind of set: a real projective variety.
To see this, let's first show that it is a real algebraic variety, specifically, the set of members of $\mathbb{H}^{M\times M}$ for which all $3\times 3$ minors are zero.
Of course, every member of $S$ has this minor property.
Next, we show that members of $S$ are the only matrices with this property: 
If the rank of a given matrix is $\geq3$, then it has an $M\times 3$ submatrix of linearly independent columns, and since the rank of its transpose is also $\geq3$, this $M\times 3$ submatrix must have $3$ linearly independent rows, thereby implicating a full-rank $3\times 3$ submatrix.
This variety is said to be projective because it is closed under scalar multiplication. 
If $S$ were a projective variety over an algebraically closed field (it's not), then the projective dimension theorem (Theorem 7.2 of~\cite{Hartshorne:77}) says that $S$ intersects $\operatorname{null}(\mathbf{A})$ nontrivially whenever the dimensions are large enough: $\dim\operatorname{null}(\mathbf{A})+\dim S>\dim\mathbb{H}^{M\times M}$, thereby implying that injectivity requires \eqref{eq.4M-4 reasoning}.
Unfortunately, this theorem is not valid when the field is $\mathbb{R}$;
for example, the cone defined by $x^2+y^2-z^2=0$ in $\mathbb{R}^3$ is a projective variety of dimension $2$, but its intersection with the $2$-dimensional $xy$-plane is trivial, despite the fact that $2+2>3$.

In the absence of a proof, we pose the natural conjecture:

\begin{conjecture4M4}
Consider $\Phi=\{\varphi_n\}_{n=1}^N\subseteq\mathbb{C}^M$ and the mapping $\mathcal{A}\colon\mathbb{C}^M/\mathbb{T}\rightarrow\mathbb{R}^N$ defined by $(\mathcal{A}(x))(n):=|\langle x,\varphi_n\rangle|^2$.
If $M\geq2$, then the following statements hold:
\begin{itemize}
\item[(a)] If $N<4M-4$, then $\mathcal{A}$ is not injective.
\item[(b)] If $N\geq4M-4$, then $\mathcal{A}$ is injective for generic $\Phi$.
\end{itemize}
\end{conjecture4M4}

For the sake of clarity, we now explicitly state what is meant by the word ``generic.''
As indicated above, a real algebraic variety is the set of common zeros of a finite set of polynomials with real coefficients.
Taking all such varieties in $\mathbb{R}^n$ to be closed sets defines the \textit{Zariski topology} on $\mathbb{R}^n$.
Viewing $\Phi$ as a member of $\mathbb{R}^{2MN}$, then we say a \textit{generic} $\Phi$ is any member of some undisclosed nonempty Zariski-open subset of $\mathbb{R}^{2MN}$.
Considering Zariski-open set are either empty or dense with full measure, genericity is a particularly strong property.
As such, another way to state part (b) of the $4M-4$ conjecture is ``If $N\geq4M-4$, then there exists a real algebraic variety $V\subseteq\mathbb{R}^{2MN}$ such that $\mathcal{A}$ is injective for every $\Phi\not\in V$.''
Note that the work of Balan, Casazza and Edidin~\cite{BalanCE:06} already proves this for $N\geq 4M-2$.
Also note that the analogous statement of (b) holds in the real case:
Full spark measurement vectors are generic, and they satisfy the complement property whenever $N\geq 2M-1$.

At this point, it is fitting to mention that after we initially formulated this conjecture, Bodmann presented a Vandermonde construction of $4M-4$ injective intensity measurements at a phase retrieval workshop at the Erwin Schr\"{o}dinger International Institute for Mathematical Physics.
The result has since been documented in~\cite{BodmannH:12}, and it establishes one consequence of the $4M-4$ conjecture: $N^*(M)\leq 4M-4$.

As incremental progress toward solving the $4M-4$ conjecture, we offer the following result:

\begin{theorem}
\label{thm.4M4 true 2}
The $4M-4$ Conjecture is true when $M=2$.
\end{theorem}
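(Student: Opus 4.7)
The plan is to reduce both parts of the $M=2$ case to Lemma~\ref{lem.not injective rank 2}, exploiting the fortuitous fact that $\mathbb{H}^{2\times 2}$ has real dimension $M^2=4$ and every nonzero self-adjoint $2\times 2$ matrix automatically has rank $1$ or $2$.

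For part (a), suppose $N<4M-4=4$, i.e., $N\leq 3$. The super analysis operator $\mathbf{A}\colon\mathbb{H}^{2\times 2}\to\mathbb{R}^N$ is a real linear map from a $4$-dimensional space to an at-most-$3$-dimensional space, so its null space has real dimension at least $1$. Any nonzero element of $\operatorname{null}(\mathbf{A})$ is a nonzero self-adjoint $2\times 2$ matrix, hence of rank $1$ or $2$. Lemma~\ref{lem.not injective rank 2} then shows $\mathcal{A}$ is not injective.

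For part (b), I would first reduce to the case $N=4$: if the first four measurement vectors already yield an injective $\mathcal{A}$, then so does any larger ensemble containing them as a subset. Thus it suffices to produce a Zariski-open nonempty $U\subseteq\mathbb{R}^{2\cdot 2\cdot 4}=\mathbb{R}^{16}$ on which $\mathcal{A}$ is injective; the corresponding ``cylindrical'' set in $\mathbb{R}^{2MN}$ will then witness genericity for every $N\geq 4$. With $N=4$, injectivity is (by the same rank argument and Lemma~\ref{lem.not injective rank 2}) equivalent to $\mathbf{A}\colon\mathbb{H}^{2\times 2}\to\mathbb{R}^4$ being injective, i.e., to the rank-one operators $\{\varphi_n\varphi_n^*\}_{n=1}^4$ being linearly independent over $\mathbb{R}$ in $\mathbb{H}^{2\times 2}$. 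Linear dependence of four vectors in a $4$-dimensional real space is the vanishing of a single determinant, which is a polynomial in the real and imaginary parts of the entries of $\Phi$. Therefore the set of $\Phi\in\mathbb{R}^{16}$ violating independence is a real algebraic variety, and it suffices to exhibit one example where independence holds.

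A convenient explicit example is $\varphi_1=(1,0)$, $\varphi_2=(0,1)$, $\varphi_3=\tfrac{1}{\sqrt{2}}(1,1)$, $\varphi_4=\tfrac{1}{\sqrt{2}}(1,\mathrm{i})$, whose rank-one projections, together with the identity, are essentially the Pauli basis of $\mathbb{H}^{2\times 2}$; one checks directly that the four matrices $\varphi_n\varphi_n^*$ are linearly independent. This shows the Zariski-closed ``bad'' set is a proper subvariety of $\mathbb{R}^{16}$, so its complement is the required nonempty Zariski-open set, completing the proof. There is no real obstacle here beyond the bookkeeping; the only subtlety is the reduction step, where one must note that linear independence of the first four $\varphi_n\varphi_n^*$'s is inherited by any superset and is cut out by a single polynomial in the first sixteen real coordinates of $\Phi\in\mathbb{R}^{2MN}$, so the corresponding cylinder in $\mathbb{R}^{2MN}$ is Zariski-open.
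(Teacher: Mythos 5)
Your proof is correct and follows essentially the same route as the paper's: part (a) is the identical rank--nullity argument via Lemma~\ref{lem.not injective rank 2}, and part (b) likewise reduces injectivity to the nonvanishing of the determinant of the $4\times 4$ super analysis matrix (a real algebraic condition) and exhibits the same witness $\bigl[\begin{smallmatrix}1&0&1&1\\0&1&1&\mathrm{i}\end{smallmatrix}\bigr]$ up to column scaling. The only difference is that you spell out the cylindrical reduction from general $N\geq 4$ to $N=4$, a detail the paper leaves implicit.
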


\begin{proof}
(a) 
Since $\mathbf{A}$ is a linear map from $4$-dimensional real space to $N$-dimensional real space, the null space of $\mathbf{A}$ is necessarily nontrivial by the rank-nullity theorem.
Furthermore, every nonzero member of this null space has rank $1$ or $2$, and so Lemma~\ref{lem.not injective rank 2} gives that $\mathcal{A}$ is not injective.

(b)
Consider the following matrix formed by 16 real variables:
\begin{equation}
\label{eq.variable matrix}
\Phi(x)=
\left[\begin{array}{cccc}
x_1+\mathrm{i}x_2 & x_5+\mathrm{i}x_6 & x_9+\mathrm{i}x_{10} & x_{13}+\mathrm{i}x_{14} \\
x_3+\mathrm{i}x_4 & x_7+\mathrm{i}x_8 & x_{11}+\mathrm{i}x_{12} & x_{15}+\mathrm{i}x_{16} 
\end{array}\right].
\end{equation}
If we denote the $n$th column of $\Phi(x)$ by $\varphi_n(x)$, then we have that $\mathcal{A}$ is injective precisely when $x\in\mathbb{R}^{16}$ produces a basis $\{\varphi_n(x)\varphi_n(x)^*\}_{n=1}^{4}$ for the space of $2\times 2$ self-adjoint operators.
Indeed, in this case $zz^*$ is uniquely determined by $\mathbf{A}zz^*=\{\langle zz^*,\varphi_n(x)\varphi_n(x)^*\rangle_\mathrm{HS}\}_{n=1}^{4}=\mathcal{A}(z)$, which in turn determines $z$ up to a global phase factor.
Let $\mathbf{A}(x)$ be the $4\times 4$ matrix representation of the super analysis operator, whose $n$th row gives the coordinates of $\varphi_n(x)\varphi_n(x)^*$ in terms of some basis for $\mathbb{H}^{2\times 2}$, say
\begin{equation}
\label{eq.onb for h}
\left\{
\left[\begin{array}{rr}
1 & 0 \\ 0 & 1
\end{array}\right],
\left[\begin{array}{rr}
0 & 0 \\ 0 & 1
\end{array}\right],
\frac{1}{\sqrt{2}}\left[\begin{array}{rr}
0 & 1 \\ 1 & 0
\end{array}\right],
\frac{1}{\sqrt{2}}\left[\begin{array}{rr}
0 & \mathrm{i} \\ -\mathrm{i} & 0
\end{array}\right]
\right\}.
\end{equation}
Then $V=\{x:\operatorname{Re}\det\mathbf{A}(x)=\operatorname{Im}\det\mathbf{A}(x)=0\}$ is a real algebraic variety in $\mathbb{R}^{16}$, and we see that $\mathcal{A}$ is injective whenever $x\in V^\mathrm{c}$. 
Since $V^\mathrm{c}$ is Zariski-open, it is either empty or dense with full measure. 
In fact, $V^\mathrm{c}$ is not empty, since we may take $x$ such that 
\begin{equation*}
\Phi(x)
=\left[
\begin{array}{cccc}
1&0&1&1\\
0&1&1&\mathrm{i}
\end{array}
\right],
\end{equation*}
as indicated in Theorem~4.1 of~\cite{BalanBCE:07}.
Therefore, $V^\mathrm{c}$ is dense with full measure.
\qquad
\end{proof}

We also have a proof for the $M=3$ case, but we first introduce Algorithm~\ref{algorithm}, namely the \textit{HMW test} for injectivity; we name it after Heinosaari, Mazarella and Wolf, who implicitly introduce this algorithm in their paper~\cite{HeinosaariMW:11}.

\begin{algorithm}[h]
\label{algorithm}
\caption{The HMW test for injectivity when $M=3$}
\textbf{Input:} Measurement vectors $\{\varphi_n\}_{n=1}^N\subseteq\mathbb{C}^3$\\
\textbf{Output:} Whether $\mathcal{A}$ is injective
\begin{algorithmic}
\STATE Define $\mathbf{A}\colon\mathbb{H}^{3\times 3}\rightarrow\mathbb{R}^N$ such that $\mathbf{A}H=\{\langle H,\varphi_n\varphi_n^*\rangle_\textrm{HS}\}_{n=1}^N$ \hfill \COMMENT{assemble the super analysis operator}
\IF{$\operatorname{dim}\operatorname{null}(\mathbf{A})=0$}
\STATE ``INJECTIVE'' \hfill \COMMENT{if $\mathbf{A}$ is injective, then $\mathcal{A}$ is injective}
\ELSE
\STATE Pick $H\in\operatorname{null}(\mathbf{A})$, $H\neq0$
\IF{$\operatorname{dim}\operatorname{null}(\mathbf{A})=1$ and $\det(H)\neq0$}
\STATE ``INJECTIVE'' \hfill \COMMENT{if $\mathbf{A}$ only maps nonsingular matrices to zero, then $\mathcal{A}$ is injective}
\ELSE
\STATE ``NOT INJECTIVE'' \hfill \COMMENT{in the remaining case, $\mathbf{A}$ maps differences of rank-$1$ matrices to zero}
\ENDIF
\ENDIF
\end{algorithmic}
\end{algorithm}

\begin{theorem}[cf.\ Proposition 6 in~\cite{HeinosaariMW:11}]
When $M=3$, the HMW test correctly determines whether $\mathcal{A}$ is injective.
\end{theorem}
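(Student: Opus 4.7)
My plan is to use Lemma~\ref{lem.not injective rank 2} as the central tool: $\mathcal{A}$ fails to be injective if and only if $\operatorname{null}(\mathbf{A})$ contains a nonzero matrix of rank $1$ or $2$ (equivalently, a nonzero singular matrix, since rank $3$ means $\det\neq 0$). I would then verify the correctness of each branch of Algorithm~\ref{algorithm}.

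First, if $\dim\operatorname{null}(\mathbf{A})=0$, then $\operatorname{null}(\mathbf{A})$ contains no nonzero matrix at all, so \emph{a fortiori} none of rank $\leq 2$; Lemma~\ref{lem.not injective rank 2} gives that $\mathcal{A}$ is injective, which matches the output.

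Second, if $\dim\operatorname{null}(\mathbf{A})=1$, then every nonzero element of $\operatorname{null}(\mathbf{A})$ is a nonzero scalar multiple of a fixed generator $H$. Rank is invariant under nonzero scaling, so $\operatorname{null}(\mathbf{A})$ contains a nonzero rank-$\leq 2$ matrix if and only if $H$ itself has rank $\leq 2$, i.e., $\det(H)=0$. Thus $\mathcal{A}$ is injective precisely when $\det(H)\neq 0$, again matching the algorithm.

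The main obstacle, and the step where $M=3$ is used essentially, is the case $\dim\operatorname{null}(\mathbf{A})\geq 2$, where we must show that $\operatorname{null}(\mathbf{A})$ necessarily contains a nonzero singular self-adjoint matrix. My plan is an odd-degree argument: pick any two linearly independent $H_1,H_2\in\operatorname{null}(\mathbf{A})$ and consider
\begin{equation*}
p(a,b):=\det(aH_1+bH_2),
\end{equation*}
which is a real-valued homogeneous polynomial of degree $3$ in $(a,b)\in\mathbb{R}^2$. Since $M=3$ is odd, $p(1,t)$ is a real polynomial of odd degree in $t$ (provided $H_1$ is not itself already singular, in which case we are done by taking $H_1$) and therefore has a real root $t_0$; then $H_1+t_0H_2$ is a nonzero element of $\operatorname{null}(\mathbf{A})$ with $\det=0$, hence of rank $1$ or $2$. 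In the degenerate situation where $p(1,t)$ has degree less than $3$ in $t$, the leading coefficient $\det(H_2)$ vanishes, so $H_2$ itself is nonzero and singular, and we are done. By Lemma~\ref{lem.not injective rank 2}, $\mathcal{A}$ is not injective in this case, which again matches the algorithm.

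Combining the three cases gives the theorem. The heart of the argument is the odd-degree trick on $\det$, which is exactly why the test is clean when $M$ is odd (and specifically for $M=3$, since for larger odd $M$ one would also need to rule out matrices of rank up to $M-1$ in a higher-dimensional null space); for even $M$ the analogous polynomial could have no real root, which is why the HMW test is stated only for $M=3$.
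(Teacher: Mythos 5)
Your proposal is correct and follows essentially the same route as the paper: the first two branches are handled exactly as in the paper via Lemma~\ref{lem.not injective rank 2}, and your odd-degree root argument on $\det(aH_1+bH_2)$ is the same sign-change idea the paper implements with the intermediate value theorem applied to $t\mapsto\det(A\cos t+B\sin t)$ on $[0,\pi]$, using $\det(-A)=(-1)^3\det(A)$. Your closing remark about why the argument is special to $M=3$ (a nonzero singular $3\times3$ matrix automatically has rank $1$ or $2$) is also the right observation.
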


\begin{proof}
First, if $\mathbf{A}$ is injective, then $\mathcal{A}(x)=\mathbf{A}xx^*=\mathbf{A}yy^*=\mathcal{A}(y)$ if and only if $xx^*=yy^*$, i.e., $y\equiv x\bmod\mathbb{T}$.
Next, suppose $\mathbf{A}$ has a $1$-dimensional null space.
Then Lemma~\ref{lem.not injective rank 2} gives that $\mathcal{A}$ is injective if and only if the null space of $\mathbf{A}$ is spanned by a matrix of full rank.
Finally, if the dimension of the null space is $2$ or more, then there exist linearly independent (nonzero) matrices $A$ and $B$ in this null space.
If $\det{(A)}=0$, then it must have rank $1$ or $2$, and so Lemma~\ref{lem.not injective rank 2} gives that $\mathcal{A}$ is not injective.
Otherwise, consider the map
\begin{equation*}
f\colon t\mapsto \det{(A\cos{t}+B\sin{t})}\qquad\forall t\in[0,\pi].
\end{equation*}
Since $f(0)=\det{(A)}$ and $f(\pi)=\det{(-A)}=(-1)^3\det{(A)}=-\det{(A)}$, the intermediate value theorem gives that there exists $t_0\in[0,\pi]$ such that $f(t_0)=0$, i.e., the matrix $A\cos{t_0}+B\sin{t_0}$ is singular.
Moreover, this matrix is nonzero since $A$ and $B$ are linearly independent, and so its rank is either $1$ or $2$.
Lemma~\ref{lem.not injective rank 2} then gives that $\mathcal{A}$ is not injective.
\qquad
\end{proof}

As an example, we may run the HMW test on the columns of the following matrix:
\begin{equation}
\label{eq.random cahill example}
\Phi
=\left[
\begin{array}{rrrrrrrr}
2&~~1&~~1&~~0&~~0&~~0&~~1&~~\mathrm{i}\\
-1&0&0&1&1&-1&-2&2\\
0&1&-1&1&-1&2\mathrm{i}&\mathrm{i}&-1
\end{array}
\right].
\end{equation}
In this case, the null space of $\mathbf{A}$ is $1$-dimensional and spanned by a nonsingular matrix.
As such, $\mathcal{A}$ is injective. 
We will see that the HMW test has a few important applications.
First, we use it to prove the $4M-4$ Conjecture in the $M=3$ case:

\begin{theorem}
\label{thm.4M4 true 3}
The $4M-4$ Conjecture is true when $M=3$.
\end{theorem}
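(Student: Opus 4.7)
The plan is to mirror the structure of the proof of Theorem~\ref{thm.4M4 true 2}, with the HMW test doing most of the heavy lifting in both directions.

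For part (a), when $N\leq 4M-5=7$ the super analysis operator $\mathbf{A}\colon\mathbb{H}^{3\times 3}\to\mathbb{R}^N$ maps a $9$-dimensional space into $\mathbb{R}^N$, so rank--nullity gives $\dim\operatorname{null}(\mathbf{A})\geq 2$. I would then rerun the ``third branch'' argument from the correctness proof of the HMW test: pick linearly independent $A,B\in\operatorname{null}(\mathbf{A})$, and either $\det A=0$ (in which case $A$ is itself a nonzero matrix of rank at most $2$) or $\det A\neq 0$, in which case the intermediate value theorem applied to $f(t):=\det(A\cos t+B\sin t)$ on $[0,\pi]$ (using $f(\pi)=-f(0)$) yields a zero $t_0$, so that $A\cos t_0+B\sin t_0$ is nonzero and singular, hence of rank $1$ or $2$. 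In either case Lemma~\ref{lem.not injective rank 2} delivers non-injectivity of $\mathcal{A}$.

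For part (b), I would first settle $N=4M-4=8$ and then extend. Fix a basis of $\mathbb{H}^{3\times 3}$ and let $M(\Phi)$ denote the resulting $8\times 9$ real matrix representing $\mathbf{A}(\Phi)$. Define $h(\Phi)\in\mathbb{R}^9$ by
\[
h(\Phi)_j \ :=\ (-1)^j\,\det\bigl(M(\Phi)_{\hat\jmath}\bigr), \qquad j=1,\ldots,9,
\]
where $M(\Phi)_{\hat\jmath}$ is $M(\Phi)$ with column $j$ deleted; then $h(\Phi)$ always lies in $\operatorname{null}(M(\Phi))$, and is nonzero precisely when $\operatorname{rank} M(\Phi)=8$. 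Let $H(\Phi)\in\mathbb{H}^{3\times 3}$ be the self-adjoint matrix with these coordinates (self-adjointness is automatic since the chosen basis consists of self-adjoint matrices), and set $\Delta(\Phi):=\det H(\Phi)$. Each entry of $H(\Phi)$ is a real polynomial in the $48$ real coordinates of $\Phi\in\mathbb{C}^{3\times 8}$, and hence so is $\Delta$. The HMW test then yields the clean characterization that $\mathcal{A}_\Phi$ is injective iff $\Delta(\Phi)\neq 0$: if $\Delta(\Phi)\neq 0$ then $\operatorname{null}(\mathbf{A}(\Phi))$ is one-dimensional and spanned by the nonsingular $H(\Phi)$, while $\Delta(\Phi)=0$ forces either $H(\Phi)=0$ (so $\dim\operatorname{null}(\mathbf{A}(\Phi))\geq 2$) or $H(\Phi)$ nonzero and singular. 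Consequently the non-injective ensembles with $N=8$ form the real algebraic variety $V_8:=\{\Phi:\Delta(\Phi)=0\}\subseteq\mathbb{R}^{48}$, and the matrix in~\eqref{eq.random cahill example}, for which the HMW test already certifies injectivity, witnesses $V_8\subsetneq\mathbb{R}^{48}$, so that $V_8^\mathrm{c}$ is Zariski-open and dense with full measure.

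For $N>8$ I would use that appending columns to an injective ensemble preserves injectivity: if $\mathcal{A}_\Phi(x)=\mathcal{A}_\Phi(y)$ then the same equality holds for any sub-ensemble of $\Phi$. The cylinder $V_8\times\mathbb{R}^{6(N-8)}\subseteq\mathbb{R}^{6N}$ is a proper real algebraic variety, and on its complement the first eight columns already deliver injectivity, which the full ensemble inherits. I expect the main obstacle to be the $N=8$ polynomial characterization: without the HMW test, ``there exists a rank-$\leq 2$ matrix in $\operatorname{null}(\mathbf{A})$'' is an existential statement that is not obviously a polynomial condition on $\Phi$; the HMW construction is what converts this existential into the explicit, Cramer-type polynomial $\Delta(\Phi)$, after which the injective example~\eqref{eq.random cahill example} certifies that $\Delta$ is not identically zero.
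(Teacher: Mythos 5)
Your proposal is correct and follows essentially the same route as the paper's proof: rank--nullity plus the HMW test for part (a), and for part (b) the cofactor-based construction of a polynomial null vector $H(\Phi)$ (the paper's $U(x)$, built via the auxiliary matrix $B(x,y)$) whose determinant $\Delta(\Phi)$ is a polynomial that vanishes exactly on the non-injective ensembles, with \eqref{eq.random cahill example} certifying nonemptiness of the complement. Your explicit treatment of $N>8$ via the cylinder over $V_8$ is a small completeness bonus that the paper leaves implicit, but it does not change the substance of the argument.
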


\begin{proof}
(a)
Suppose $N<4M-4=8$.
Then by the rank-nullity theorem, the super analysis operator $\mathbf{A}\colon\mathbb{H}^{3\times 3}\rightarrow\mathbb{R}^N$ has a null space of at least $2$ dimensions, and so by the HMW test, $\mathcal{A}$ is not injective.

(b)
Consider a $3\times 8$ matrix of real variables $\Phi(x)$ similar to $\eqref{eq.variable matrix}$.
Then $\mathcal{A}$ is injective whenever $x\in\mathbb{R}^{48}$ produces an ensemble $\{\varphi_n(x)\}_{n=1}^8\subseteq\mathbb{C}^3$ that passes the HMW test.
To pass, the rank-nullity theorem says that the null space of the super analysis operator had better be $1$-dimensional and spanned by a nonsingular matrix.
Let's use an orthonormal basis for $\mathbb{H}^{3\times 3}$ similar to \eqref{eq.onb for h} to find an $8\times 9$ matrix representation of the super analysis operator $\mathbf{A}(x)$; it is easy to check that the entries of this matrix (call it $\mathbf{A}(x)$) are polynomial functions of $x$.
Consider the matrix
\begin{equation*}
B(x,y)=
\left[
\begin{array}{c}
y^\mathrm{T}\\
\mathbf{A}(x)
\end{array}
\right],
\end{equation*}
and let $u(x)$ denote the vector of $(1,j)$th cofactors of $B(x,y)$.
Then $\langle y,u(x)\rangle=\det(B(x,y))$.
This implies that $u(x)$ is in the null space of $\mathbf{A}(x)$, since each row of $\mathbf{A}(x)$ is necessarily orthogonal to $u(x)$.

We claim that $u(x)=0$ if and only if the dimension of the null space of $\mathbf{A}(x)$ is $2$ or more, that is, the rows of $\mathbf{A}(x)$ are linearly dependent.
First, ($\Leftarrow$) is true since the entries of $u(x)$ are signed determinants of $8\times 8$ submatrices of $\mathbf{A}(x)$, which are necessarily zero by the linear dependence of the rows. 
For ($\Rightarrow$), we have that $0=\langle y,0\rangle=\langle y,u(x)\rangle=\det(B(x,y))$ for all $y\in\mathbb{R}^9$.
That is, even if $y$ is nonzero and orthogonal to the rows of $\mathbf{A}(x)$, the rows of $B(x,y)$ are linearly dependent, and so the rows of $\mathbf{A}(x)$ must be linearly dependent.
This proves our intermediate claim.

We now use the claim to prove the result.
The entries of $u(x)$ are coordinates of a matrix $U(x)\in\mathbb{H}^{3\times 3}$ in the same basis as before.
Note that the entries of $U(x)$ are polynomials of $x$.
Furthermore, $\mathcal{A}$ is injective if and only if $\det U(x)\neq0$.
To see this, observe three cases:

Case I: $U(x)=0$, i.e., $u(x)=0$, or equivalently, $\dim\operatorname{null}(\mathbf{A}(x))\geq2$.
By the HMW test, $\mathcal{A}$ is not injective.

Case II: The null space is spanned by $U(x)\neq0$, but $\det U(x)=0$.
By the HMW test, $\mathcal{A}$ is not injective.

Case III: The null space is spanned by $U(x)\neq0$, and $\det U(x)\neq0$.
By the HMW test, $\mathcal{A}$ is injective.

Defining the real algebraic variety $V=\{x:\det U(x)=0\}\subseteq\mathbb{R}^{48}$, we then have that $\mathcal{A}$ is injective precisely when $x\in V^\mathrm{c}$.
Since $V^\mathrm{c}$ is Zariski-open, it is either empty or dense with full measure, but it is nonempty since \eqref{eq.random cahill example} passes the HMW test.
Therefore, $V^\mathrm{c}$ is dense with full measure.
\qquad
\end{proof}

Recall Wright's conjecture: that there exist unitary matrices $U_1$, $U_2$ and $U_3$ such that $\Phi=[U_1~U_2~U_3]$ yields injective intensity measurements.
Also recall that Wright's conjecture implies $N^*(M)\leq3M-2$.
Again, both of these were disproved by Heinosaari et al.~\cite{HeinosaariMW:11} using deep results in differential geometry.
Alternatively, Theorem~\ref{thm.4M4 true 3} also disproves these in the case where $M=3$, since $N^*(3)=4(3)-3=8>7=3(3)-2$.

Note that the HMW test can be used to test for injectivity in three dimensions regardless of the number of measurement vectors.
As such, it can be used to evaluate ensembles of $3\times 3$ unitary matrices for quantum mechanics.
For example, consider the $3\times 3$ fractional discrete Fourier transform, defined in~\cite{CandanKO:00} using discrete Hermite-Gaussian functions:
\begin{equation*}
F^{\alpha}
=
\frac{1}{6}
\left[
\begin{array}{ccc}
3+\sqrt{3} & \sqrt{3} & \sqrt{3} \\
\sqrt{3} & \frac{3-\sqrt{3}}{2} & \frac{3-\sqrt{3}}{2} \\
\sqrt{3} & \frac{3-\sqrt{3}}{2} & \frac{3-\sqrt{3}}{2}
\end{array}
\right]
+
\frac{e^{\alpha \mathrm{i}\pi}}{6}
\left[
\begin{array}{ccc}
3-\sqrt{3} & -\sqrt{3} & -\sqrt{3} \\
-\sqrt{3} & \frac{3+\sqrt{3}}{2} & \frac{3+\sqrt{3}}{2} \\
-\sqrt{3} & \frac{3+\sqrt{3}}{2} & \frac{3+\sqrt{3}}{2}
\end{array}
\right]
+
\frac{e^{\alpha \mathrm{i}\pi/2}}{2}
\left[
\begin{array}{rrr}
0 & 0 & 0 \\
0 & 1 & -1\\
0 & -1 & 1
\end{array}
\right]
\end{equation*}
It can be shown by the HMW test that $\Phi=[I~F^{1/2}~F~F^{3/2}]$ yields injective intensity measurements.
This leads to the following refinement of Wright's conjecture:

\begin{conjecture}
\label{conjecture.new wrights}
Let $F$ denote the $M\times M$ discrete fractional Fourier transform defined in~\cite{CandanKO:00}.
Then for every $M\geq3$, $\Phi=[I~F^{1/2}~F~F^{3/2}]$ yields injective intensity measurements.
\end{conjecture}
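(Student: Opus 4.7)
The plan is to attack the conjecture via Lemma~\ref{lem.not injective rank 2}: injectivity of $\mathcal{A}$ for $\Phi=[I~F^{1/2}~F~F^{3/2}]$ is equivalent to the super analysis operator $\mathbf{A}\colon\mathbb{H}^{M\times M}\rightarrow\mathbb{R}^{4M}$ having no nonzero rank-$\leq 2$ matrix in its null space. Since each block of $\Phi$ is unitary, $\mathbf{A}H=0$ unpacks cleanly: for each $\alpha\in\{0,1/2,1,3/2\}$, the conjugated matrix $(F^{\alpha})^*HF^{\alpha}$ must have vanishing diagonal. The $\alpha=0$ condition alone forces $H$ itself to have zero diagonal, which rules out the rank-$1$ case, because every nonzero rank-$1$ self-adjoint matrix is definite and thus has a strictly signed diagonal entry. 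Hence it suffices to rule out rank-$2$ elements of $\operatorname{null}(\mathbf{A})$.

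By the spectral theorem and the zero-diagonal condition at $\alpha=0$, every rank-$2$ $H\in\operatorname{null}(\mathbf{A})$ has the form $H=xx^*-yy^*$ with $x\perp y$ and $|x_i|=|y_i|$ for every $i$, exactly as in the proof of Lemma~\ref{lem.not injective rank 2}. The conjecture thereby reduces to the phase-retrieval statement: if $|(F^{\alpha})^*x|_i=|(F^{\alpha})^*y|_i$ for all $i$ and all $\alpha\in\{0,1/2,1,3/2\}$, then $y\equiv x\bmod\mathbb{T}$. I would then invoke the construction of~\cite{CandanKO:00}, which yields discrete Hermite-Gaussian vectors $\{v_k\}_{k=0}^{M-1}$ that simultaneously diagonalize every $F^{\alpha}$ as $F^{\alpha}=\sum_k e^{-\mathrm{i}\pi\alpha k/2}v_k v_k^*$. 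Expanding $x=\sum_k a_k v_k$ and $y=\sum_k b_k v_k$, the componentwise equality at coordinate $i$ becomes $Q_i(e^{\mathrm{i}\pi\alpha/2})=0$, where
\begin{equation*}
Q_i(z)=\sum_{k,k'}\bigl(a_k\overline{a_{k'}}-b_k\overline{b_{k'}}\bigr)(v_k)_i\overline{(v_{k'})_i}\,z^{k-k'}
\end{equation*}
is a Laurent polynomial that is real-valued on the unit circle. Sampling $\alpha\in\{0,1/2,1,3/2\}$ forces $Q_i$ to vanish at the four eighth roots of unity $\{1,e^{\mathrm{i}\pi/4},e^{\mathrm{i}\pi/2},e^{\mathrm{i}3\pi/4}\}$, and reality forces vanishing at their conjugates, yielding seven prescribed zeros per coordinate.

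The main obstacle will be bridging this collection of polynomial vanishing conditions, one family per coordinate $i$, to the desired rigidity $b_k=ca_k$ for a common $c\in\mathbb{T}$. The number of prescribed zeros per $Q_i$ is fixed at seven while $\deg Q_i$ grows with $M$, so a naive zero-count fails once $M$ is moderately large; the argument must then exploit the interaction across coordinates $i$, together with the orthogonality $\langle x,y\rangle=0$ and the diagonal identity $|x_i|=|y_i|$, and it will hinge on rigidity properties of the Hermite-Gaussians $\{v_k\}$ in dimension $M$, for which closed-form expressions are unavailable. A realistic roadmap is therefore: (i) implement a general-$M$ HMW-style test that computes $\operatorname{null}(\mathbf{A})$ and checks for rank-$\leq 2$ feasibility inside it, verifying the conjecture for the first several $M$; (ii) attempt an inductive proof on $M$ via the three-term recurrence defining the discrete Hermite-Gaussians of~\cite{CandanKO:00}; and (iii) should the specific sampling $\alpha\in\{0,1/2,1,3/2\}$ prove too degenerate at large $M$, consider replacing one block by $F^{\alpha}$ for generic $\alpha\in(0,2)$ to avoid the eighth-root-of-unity collision and obtain a cleaner refinement of Wright's conjecture in the same spirit.
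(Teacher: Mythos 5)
The statement you are asked to prove is, in the paper, only a \emph{conjecture}: the authors give no proof for general $M$, and their sole evidence is a finite computation --- running the HMW test (Algorithm~1) on $\Phi=[I~F^{1/2}~F~F^{3/2}]$ in the single case $M=3$, where the null space of the super analysis operator turns out to be one-dimensional and spanned by a nonsingular matrix. So there is no ``paper proof'' to match; the only question is whether your argument actually closes the conjecture, and it does not.

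Your reductions are sound as far as they go: unitarity of each block does convert $\mathbf{A}H=0$ into the vanishing of the diagonals of $(F^\alpha)^*HF^\alpha$; the $\alpha=0$ block does kill rank-$1$ null vectors and forces any rank-$2$ null vector into the form $xx^*-yy^*$ with $x\perp y$ and $|x_i|=|y_i|$ (zero diagonal implies zero trace, so the two eigenvalues are opposite); and the eigendecomposition of the fractional DFT does turn each coordinate condition into the vanishing of a real-on-the-circle Laurent polynomial $Q_i$ at seven unimodular points. But that is exactly where the proof has to begin, not end. The Laurent degree of $Q_i$ grows like $M$, so for $M\geq 5$ the seven prescribed zeros say nothing by counting alone, and the entire burden falls on the unstated ``rigidity properties of the Hermite--Gaussians'' that would force the coefficient array $a_k\overline{a_{k'}}-b_k\overline{b_{k'}}$ to vanish. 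You acknowledge this yourself and substitute a roadmap --- numerics for small $M$, a hoped-for induction on the three-term recurrence, or altering the measurement ensemble --- none of which is an argument, and the last of which changes the statement being proved. As written, the proposal is a reformulation of the conjecture (a correct and potentially useful one), not a proof of it; the central implication ``$Q_i$ vanishes at these seven points for every $i$, together with $\langle x,y\rangle=0$, implies $y\equiv x\bmod\mathbb{T}$'' is precisely the open content of Conjecture~\ref{conjecture.new wrights} and is nowhere established.
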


This conjecture can be viewed as the discrete analog to the work of Jaming~\cite{Jaming:10}, in which ensembles of \textit{continuous} fractional Fourier transforms are evaluated for injectivity.

\section{Stability}

\subsection{Stability in the worst case}

As far as applications are concerned, the stability of reconstruction is perhaps the most important consideration.
To date, the only known stability results come from PhaseLift~\cite{CandesSV:11}, the polarization method~\cite{AlexeevBFM:12}, and a very recent paper of Eldar and Mendelson~\cite{EldarM:12}.
This last paper focuses on the real case, and analyzes how well subgaussian random measurement vectors distinguish signals, thereby yielding some notion of stability which is independent of the reconstruction algorithm used.
In particular, given independent random measurement vectors $\{\varphi_n\}_{n=1}^N\subseteq\mathbb{R}^M$, Eldar and Mendelson evaluated measurement separation by finding a constant $C$ such that
\begin{equation}
\label{eq.eldar stability}
\|\mathcal{A}(x)-\mathcal{A}(y)\|_1\geq C\|x-y\|_2\|x+y\|_2
\qquad
\forall x,y\in\mathbb{R}^M,
\end{equation}
where $\mathcal{A}\colon\mathbb{R}^M\rightarrow\mathbb{R}^N$ is the intensity measurement process defined by $(\mathcal{A}(x))(n):=|\langle x,\varphi_n\rangle|^2$.
With this, we can say that if $\mathcal{A}(x)$ and $\mathcal{A}(y)$ are close, then $x$ must be close to either $\pm y$, and even closer for larger $C$. 
By the contrapositive, distant signals will not be confused in the measurement domain because $\mathcal{A}$ does a good job of separating them.

One interesting feature of \eqref{eq.eldar stability} is that increasing the lengths of the measurement vectors $\{\varphi_n\}_{n=1}^N$ will in turn increase $C$, meaning the measurements are better separated.
As such, for any given magnitude of noise, one can simply amplify the measurement process so as to drown out the noise and ensure stability.
However, such amplification could be rather expensive, and so this motivates a different notion of stability---one that is invariant to how the measurement ensemble is scaled.
One approach is to build on intuition from Lemma~\ref{lem.not injective rank 2}; that is, a super analysis operator is intuitively more stable if its null space is distant from all rank-$2$ operators simultaneously; since this null space is invariant to how the measurement vectors are scaled, this is one prospective (and particularly geometric) notion of stability.
In this section, we will focus on another alternative.
Note that $\operatorname{d}(x,y):=\min\{\|x-y\|,\|x+y\|\}$ defines a metric on $\mathbb{R}^M/\{\pm1\}$, and consider the following:

\begin{definition}
\label{definition.worst-case stability}
We say $f\colon\mathbb{R}^M/\{\pm1\}\rightarrow\mathbb{R}^N$ is \textit{$C$-stable} if for every $\mathrm{SNR}>0$, there exists an estimator $g\colon\mathbb{R}^N\rightarrow\mathbb{R}^M/\{\pm1\}$ such that for every nonzero signal $x\in\mathbb{R}^M/\{\pm1\}$ and adversarial noise term $z$ with $\|z\|^2\leq\|f(x)\|^2/\mathrm{SNR}$, the relative error in reconstruction satisfies
\begin{equation*}
\frac{\operatorname{d}\big(g(f(x)+z),x\big)}{\|x\|}
\leq\frac{C}{\sqrt{\mathrm{SNR}}}.
\end{equation*}
\end{definition}

According to this definition, $f$ is more stable when $C$ is smaller.
Also, because of the $\mathrm{SNR}$ (signal-to-noise ratio) model, $f$ is $C$-stable if and only if every nonzero multiple of $f$ is also $C$-stable.
Indeed, taking $\tilde{f}:=cf$ for some nonzero scalar $c$, then for every adversarial noise term $\tilde{z}$ which is admissible for $\tilde{f}(x)$ and $\mathrm{SNR}$, we have that $z:=\tilde{z}/c$ is admissible for $f(x)$ and $\mathrm{SNR}$; as such, $\tilde{f}$ inherits $f$'s $C$-stability by using the estimator $\tilde{g}$ defined by $\tilde{g}(y):=g(y/c)$.
Overall, this notion of stability offers the invariance to scaling we originally desired.
With this, if we find a measurement process $f$ which is $C$-stable with minimal $C$, at that point, we can take advantage of noise with bounded magnitude by amplifying $f$ (and thereby effectively increasing $\mathrm{SNR}$) until the relative error in reconstruction is tolerable.

Now that we have a notion of stability, we provide a sufficient condition:

\begin{theorem}
\label{thm.bilipschitz}
Suppose $f$ is bilipschitz, that is, there exist constants $0<\alpha\leq\beta<\infty$ such that
\begin{equation*}
\alpha\operatorname{d}(x,y)
\leq\|f(x)-f(y)\|
\leq\beta\operatorname{d}(x,y)
\qquad
\forall x,y\in\mathbb{R}^M/\{\pm1\}.
\end{equation*}
If $f(0)=0$, then $f$ is $\frac{2\beta}{\alpha}$-stable.
\end{theorem}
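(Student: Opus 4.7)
The natural candidate for the estimator is the nearest-point decoder: given an observation $y\in\mathbb{R}^N$, define
\begin{equation*}
g(y):=\operatorname{argmin}_{w\in\mathbb{R}^M/\{\pm1\}}\|f(w)-y\|.
\end{equation*}
Before proceeding, I would check that this minimum is actually attained (as opposed to merely an infimum). The lower Lipschitz bound forces $f$ to be proper on $\mathbb{R}^M/\{\pm1\}$ (preimages of bounded sets are bounded), so one may minimize over a compact ball and the existence of a minimizer follows from continuity. If measurability of $g$ were a concern, one could instead take any $\hat{x}$ with $\|f(\hat{x})-y\|$ within $\varepsilon$ of the infimum and then send $\varepsilon\to0$ at the end; this would introduce only a negligible additive slack.

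Given SNR$>0$, nonzero $x$, and noise $z$ with $\|z\|^2\leq\|f(x)\|^2/\mathrm{SNR}$, set $\hat{x}:=g(f(x)+z)$. Since $x$ is itself a candidate in the minimization, $\|f(\hat{x})-(f(x)+z)\|\leq\|f(x)-(f(x)+z)\|=\|z\|$, and so the triangle inequality gives $\|f(\hat{x})-f(x)\|\leq2\|z\|$. The lower bilipschitz bound then yields $\alpha\operatorname{d}(\hat{x},x)\leq\|f(\hat{x})-f(x)\|\leq2\|z\|$, i.e.,
\begin{equation*}
\operatorname{d}(\hat{x},x)\leq\frac{2\|z\|}{\alpha}.
\end{equation*}

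To convert this into a relative-error bound, I would use $f(0)=0$ together with the upper Lipschitz bound: $\|f(x)\|=\|f(x)-f(0)\|\leq\beta\operatorname{d}(x,0)=\beta\|x\|$, where the last equality uses that $\operatorname{d}(x,0)=\min\{\|x-0\|,\|x+0\|\}=\|x\|$. Combining this with the SNR hypothesis $\|z\|\leq\|f(x)\|/\sqrt{\mathrm{SNR}}$ gives $\|z\|\leq\beta\|x\|/\sqrt{\mathrm{SNR}}$, and substituting into the display above yields
\begin{equation*}
\frac{\operatorname{d}(\hat{x},x)}{\|x\|}\leq\frac{2\beta/\alpha}{\sqrt{\mathrm{SNR}}},
\end{equation*}
which is exactly the stability condition with constant $C=2\beta/\alpha$.

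The computational content is essentially two applications of the triangle inequality and one use of each bilipschitz bound; I do not anticipate a serious obstacle. The only conceptually subtle point is the existence/measurability of the nearest-point decoder, and as noted above this is handled either by properness of $f$ or by an $\varepsilon$-approximate minimizer argument. The hypothesis $f(0)=0$ is used exactly once, to turn the upper Lipschitz bound at the pair $(x,0)$ into an inequality between $\|f(x)\|$ and $\|x\|$, so the role of every hypothesis is accounted for.
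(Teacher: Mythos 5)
Your proposal is correct and follows essentially the same route as the paper: the project-and-invert (nearest-point) estimator, the triangle inequality giving the factor $2\|z\|$, the lower Lipschitz bound to control $\operatorname{d}(\hat{x},x)$, and the upper Lipschitz bound at $(x,0)$ with $f(0)=0$ to convert to relative error. The only difference is cosmetic: you justify existence of the minimizer via properness of $f$, while the paper proves that $\operatorname{range}(f)$ is closed; both hinge on the lower Lipschitz bound and are equally valid.
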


\begin{proof}
Consider the projection function $P\colon\mathbb{R}^N\rightarrow\mathbb{R}^N$ defined by
\begin{equation*}
P(y)
:=\underset{y'\in\operatorname{range}(f)}{\arg\min}\|y'-y\|
\qquad
\forall
y\in\mathbb{R}^N.
\end{equation*}
In cases where the minimizer is not unique, we will pick one of them to be $P(y)$.
For $P$ to be well-defined, we claim it suffices for $\operatorname{range}(f)$ to be closed.
Indeed, this ensures that a minimizer always exists; since $0\in\operatorname{range}(f)$, any prospective minimizer must be no farther from $y$ than $0$ is, meaning we can equivalently minimize over the intersection of $\operatorname{range}(f)$ and the closed ball of radius $\|y\|$ centered at $y$; this intersection is compact, and so a minimizer necessarily exists.
In order to avoid using the axiom of choice, we also want a systematic method of breaking ties when the minimizer is not unique, but this can be done using lexicographic ideas provided $\operatorname{range}(f)$ is closed.

We now show that $\operatorname{range}(f)$ is, in fact, closed.
Pick a convergent sequence $\{y_n\}_{n=1}^\infty\subseteq\operatorname{range}(f)$.
This sequence is necessarily Cauchy, which means the corresponding sequence of inverse images $\{x_n\}_{n=1}^\infty\subseteq\mathbb{R}^M/\{\pm1\}$ is also Cauchy (using the lower Lipschitz bound $\alpha>0$).
Arbitrarily pick a representative $z_n\in\mathbb{R}^M$ for each $x_n$.
Then $\{z_n\}_{n=1}^\infty$ is bounded, and thus has a subsequence that converges to some $z\in\mathbb{R}^M$.
Denote $x:=\{\pm z\}\in\mathbb{R}^M/\{\pm1\}$.
Then $\operatorname{d}(x_n,x)\leq\|z_n-z\|$, and so $\{x_n\}_{n=1}^\infty$ has a subsequence which converges to $x$.
Since $\{x_n\}_{n=1}^\infty$ is also Cauchy, we therefore have $x_n\rightarrow x$.
Then the upper Lipschitz bound $\beta<\infty$ gives that $f(x)\in\operatorname{range}(f)$ is the limit of $\{y_n\}_{n=1}^\infty$.

Now that we know $P$ is well-defined, we continue.
Since $\alpha>0$, we know $f$ is injective, and so we can take $g:=f^{-1}\circ P$.
In fact, $\alpha^{-1}$ is a Lipschitz bound for $f^{-1}$, implying
\begin{equation}
\label{eq.bilipschitz1}
\operatorname{d}\big(g(f(x)+z),x\big)
=\operatorname{d}\Big(f^{-1}\big(P(f(x)+z)\big),f^{-1}\big(f(x)\big)\Big)
\leq\alpha^{-1}\|P(f(x)+z)-f(x)\|.
\end{equation}
Furthermore, the triangle inequality and the definition of $P$ together give
\begin{equation}
\label{eq.bilipschitz2}
\|P(f(x)+z)-f(x)\|
\leq\|P(f(x)+z)-(f(x)+z)\|+\|z\|
\leq\|f(x)-(f(x)+z)\|+\|z\|
=2\|z\|.
\end{equation}
Combining \eqref{eq.bilipschitz1} and \eqref{eq.bilipschitz2} then gives
\begin{equation*}
\frac{\operatorname{d}\big(g(f(x)+z),x\big)}{\|x\|}
\leq 2\alpha^{-1}\frac{\|z\|}{\|x\|}
\leq \frac{2\alpha^{-1}}{\sqrt{\mathrm{SNR}}}\frac{\|f(x)\|}{\|x\|}
=\frac{2\alpha^{-1}}{\sqrt{\mathrm{SNR}}}\frac{\|f(x)-f(0)\|}{\|x-0\|}
\leq\frac{2\beta/\alpha}{\sqrt{\mathrm{SNR}}},
\end{equation*}
as desired.
\qquad
\end{proof}

Note that the ``project-and-invert'' estimator we used to demonstrate stability is far from new.
For example, if the noise were modeled as Gaussian random, then project-and-invert is precisely the maximum likelihood estimator.
However, stochastic noise models warrant a much deeper analysis, since in this regime, one is often concerned with the bias and variance of estimates.
As such, we will investigate these issues in the next section.
Another example of project-and-invert is the Moore-Penrose pseudoinverse of an $N\times M$ matrix $A$ of rank $M$. 
Using the obvious reformulation of $C$-stable in this linear case, it can be shown that $C$ is the condition number of $A$, meaning $\alpha$ and $\beta$ are analogous to the smallest and largest singular values.
The extra factor of $2$ in the stability constant of Theorem~\ref{thm.bilipschitz} is an artifact of the nonlinear setting: 
For the sake of illustration, suppose $\operatorname{range}(f)$ is the unit circle and $f(x)=(-1,0)$ but $z=(1+\varepsilon,0)$; then $P(f(x)+z)=(1,0)$, which is just shy of $2\|z\|$ away from $f(x)$.
This sort of behavior is not exhibited in the linear case, in which $\operatorname{range}(f)$ is a subspace.

Having established the sufficiency of bilipschitz for stability, we now note that $\mathcal{A}$ is \textit{not} bilipschitz.
In fact, more generally, $\mathcal{A}$ fails to satisfy any H\"{o}lder condition. 
To see this, pick some nonzero measurement vector $\varphi_n$ and scalars $C>0$ and $\alpha\geq0$.
Then
\begin{align*}
\frac{\|\mathcal{A}((C+1)\varphi_n)-\mathcal{A}(\varphi_n)\|}{\operatorname{d}((C+1)\varphi_n,\varphi_n)^\alpha}
&=\frac{1}{\|C\varphi_n\|^\alpha}\bigg(\sum_{n'=1}^N\Big(|\langle (C+1)\varphi_n,\varphi_{n'}\rangle|^2-|\langle \varphi_n,\varphi_{n'}\rangle|^2\Big)^2\bigg)^{1/2}\\
&=\frac{(C+1)^2-1}{C^\alpha}\frac{\|\mathcal{A}(\varphi_n)\|}{\|\varphi_n\|^\alpha}.
\end{align*}
Furthermore, $\|\mathcal{A}(\varphi_n)\|\geq|(\mathcal{A}(\varphi_n))(n)|=\|\varphi_n\|^4>0$, while $\frac{(C+1)^2-1}{C^\alpha}$ diverges as $C\rightarrow\infty$, assuming $\alpha\leq1$; when $\alpha>1$, it also diverges as $C\rightarrow0$, but this case is not interesting for infamous reasons~\cite{Lipton:10}.

All is not lost, however.
As we will see, with this notion of stability, it happens to be more convenient to consider $\sqrt{\mathcal{A}}$, defined entrywise by $(\sqrt{\mathcal{A}}(x))(n)=|\langle x,\varphi_n\rangle|$.
Considering Theorem~\ref{thm.bilipschitz}, we are chiefly interested in the optimal constants $0<\alpha\leq\beta<\infty$ for which
\begin{equation}
\label{eq.restricted isometry}
\alpha \operatorname{d}(x,y)
\leq \|\sqrt{\mathcal{A}}(x)-\sqrt{\mathcal{A}}(y)\|
\leq \beta \operatorname{d}(x,y)
\qquad
\forall x,y\in \mathbb{R}^M/\{\pm1\}.
\end{equation}
In particular, Theorem~\ref{thm.bilipschitz} guarantees more stability when $\alpha$ and $\beta$ are closer together; this indicates that when suitably scaled, we want $\sqrt{\mathcal{A}}$ to act as a near-isometry, despite being a nonlinear function. 
The following lemma gives the upper Lipschitz constant:

\begin{lemma}
\label{lemma.beta expression}
The upper Lipschitz constant for $\sqrt{\mathcal{A}}$ is $\beta=\|\Phi^*\|_2$.
\end{lemma}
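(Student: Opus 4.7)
The plan is to show both directions: $\beta \leq \|\Phi^*\|_2$ (the claimed bound is valid) and $\beta \geq \|\Phi^*\|_2$ (it is the smallest valid bound).

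For the upper bound, I would start from the pointwise reverse triangle inequality in two forms. For any real scalars $a,b$, one has both $||a|-|b|| \leq |a-b|$ and, by applying the same inequality to $a$ and $-b$, $||a|-|b|| \leq |a+b|$. Applying this with $a = \langle x,\varphi_n\rangle$ and $b = \langle y,\varphi_n\rangle$ and squaring then summing over $n$ yields the two parallel estimates
\begin{equation*}
\|\sqrt{\mathcal{A}}(x)-\sqrt{\mathcal{A}}(y)\|^2 \leq \sum_{n=1}^N |\langle x-y,\varphi_n\rangle|^2 = \|\Phi^*(x-y)\|^2 \leq \|\Phi^*\|_2^2\,\|x-y\|^2,
\end{equation*}
and identically with $x+y$ in place of $x-y$. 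Taking whichever of $\|x-y\|$ or $\|x+y\|$ is smaller on the right gives $\|\sqrt{\mathcal{A}}(x)-\sqrt{\mathcal{A}}(y)\| \leq \|\Phi^*\|_2\,\operatorname{d}(x,y)$, establishing that $\beta \leq \|\Phi^*\|_2$.

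For the matching lower bound, I would specialize the Lipschitz inequality \eqref{eq.restricted isometry} to the case $y=0$. Since $\sqrt{\mathcal{A}}(0)=0$ and $\operatorname{d}(x,0)=\|x\|$, the inequality reduces to $\|\sqrt{\mathcal{A}}(x)\| \leq \beta\|x\|$, and $\|\sqrt{\mathcal{A}}(x)\|^2 = \sum_n |\langle x,\varphi_n\rangle|^2 = \|\Phi^* x\|^2$. Taking the supremum over unit-norm $x$ then gives $\|\Phi^*\|_2 \leq \beta$, so the two bounds combine to yield $\beta = \|\Phi^*\|_2$.

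There is no real obstacle here; the only thing to watch is that summing two separate pointwise bounds does not automatically give a bound by the minimum of the sums, but here the sum-of-squares bound arises twice from the same quantity with two independent right-hand sides, so one legitimately passes to the minimum at the final step. The proof essentially only uses the reverse triangle inequality and the definition of the operator norm of the analysis operator $\Phi^*\colon \mathbb{R}^M \to \mathbb{R}^N$ given by $(\Phi^* x)(n) = \langle x,\varphi_n\rangle$.
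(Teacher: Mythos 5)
Your proposal is correct and follows essentially the same route as the paper: the reverse triangle inequality applied to both $x-y$ and $x+y$ gives the upper bound $\|\Phi^*\|_2\operatorname{d}(x,y)$, and sharpness is obtained by taking $y=0$, where $\sqrt{\mathcal{A}}$ reduces to $x\mapsto\|\Phi^*x\|$ entrywise. The only cosmetic difference is that the paper carries the pointwise minimum through a single chain of inequalities (and exhibits an explicit norm-attaining $x$ rather than taking a supremum), whereas you run two parallel estimates and minimize at the end; the two are logically equivalent, and your remark about why the minimum can be taken at the last step is exactly the justification implicit in the paper's displayed inequality.
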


\begin{proof}
By the reverse triangle inequality, we have
\begin{equation*}
\big||a|-|b|\big|
\leq\min\big\{|a-b|,|a+b|\big\}
\qquad
\forall a,b\in\mathbb{R}.
\end{equation*}
Thus, for all $x,y\in\mathbb{R}^M/\{\pm1\}$,
\begin{align}
\nonumber
\|\sqrt{\mathcal{A}}(x)-\sqrt{\mathcal{A}}(y)\|^2
&=\sum_{n=1}^N\big||\langle x,\varphi_n\rangle|-|\langle y,\varphi_n\rangle|\big|^2\\
\nonumber
&\leq\sum_{n=1}^N\bigg(\min\Big\{|\langle x-y,\varphi_n\rangle|,|\langle x+y,\varphi_n\rangle|\Big\}\bigg)^2\\
\nonumber
&\leq\min\Big\{\|\Phi^*(x-y)\|^2,\|\Phi^*(x+y)\|^2\Big\}\\
\label{eq.upper lipschitz bound 1}
&\leq\|\Phi^*\|_2^2\big(\operatorname{d}(x,y)\big)^2.
\end{align}
Furthermore, picking a nonzero $x\in\mathbb{R}^M$ such that $\|\Phi^*x\|=\|\Phi^*\|_2\|x\|$ gives
\begin{equation*}
\|\sqrt{\mathcal{A}}(x)-\sqrt{\mathcal{A}}(0)\|
=\|\sqrt{\mathcal{A}}(x)\|
=\|\Phi^*x\|
=\|\Phi^*\|_2\|x\|
=\|\Phi^*\|_2\operatorname{d}(x,0),
\end{equation*}
thereby achieving equality in \eqref{eq.upper lipschitz bound 1}.
\qquad
\end{proof}

The lower Lipschitz bound is much more difficult to determine.
Our approach to analyzing this bound is based on the following definition:

\begin{definition}
We say an $M\times N$ matrix $\Phi$ satisfies the $\sigma$-strong complement property ($\sigma$-SCP) if 
\begin{equation*}
\max\big\{\lambda_{\mathrm{min}}(\Phi_S\Phi_S^*),\lambda_{\mathrm{min}}(\Phi_{S^\mathrm{c}}\Phi_{S^\mathrm{c}}^*)\big\}
\geq \sigma^2
\end{equation*}
for every $S\subseteq\{1,\ldots,N\}$.
\end{definition}

This is a numerical version of the complement property we discussed earlier.
It bears some resemblance to other matrix properties, namely combinatorial properties regarding the conditioning of submatrices, e.g., the restricted isometry property~\cite{Candes:08}, the Kadison-Singer problem~\cite{CasazzaFTW:06} and numerically erasure-robust frames~\cite{FickusM:12}.
We are interested in SCP because it is very related to the lower Lipschitz bound in \eqref{eq.restricted isometry}:

\begin{theorem}
\label{thm.bounding alpha}
The lower Lipschitz constant for $\sqrt{\mathcal{A}}$ satisfies
\begin{equation*}
\sigma\leq\alpha\leq\sqrt{2}\sigma,
\end{equation*}
where $\sigma$ is the largest scalar for which $\Phi$ has the $\sigma$-strong complement property.
\end{theorem}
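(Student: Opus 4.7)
The plan is to follow the same sign-partition strategy used in the proof of Theorem~\ref{thm.complement property characterization}, but now to track quantitative eigenvalue information rather than just span/non-span. The fundamental identity is that for real scalars $a,b$, $||a|-|b|| = |a-b|$ when $a$ and $b$ share a sign and $||a|-|b|| = |a+b|$ otherwise. Setting $u := x-y$, $v := x+y$, and letting $S \subseteq\{1,\ldots,N\}$ be the set of indices where $\langle x,\varphi_n\rangle$ and $\langle y,\varphi_n\rangle$ share a sign, this yields the exact identity
\begin{equation*}
\|\sqrt{\mathcal{A}}(x)-\sqrt{\mathcal{A}}(y)\|^2 \;=\; \|\Phi_S^* u\|^2 \,+\, \|\Phi_{S^\mathrm{c}}^* v\|^2.
\end{equation*}
This identity is the hinge for both bounds.

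For the lower bound $\alpha \geq \sigma$, I would apply $\sigma$-SCP to this specific $S$: at least one of $\lambda_{\min}(\Phi_S\Phi_S^*)$ or $\lambda_{\min}(\Phi_{S^\mathrm{c}}\Phi_{S^\mathrm{c}}^*)$ is at least $\sigma^2$. In the first case, $\|\Phi_S^*u\|^2 \geq \sigma^2\|u\|^2 \geq \sigma^2\operatorname{d}(x,y)^2$ because $\|u\| = \|x-y\| \geq \min(\|x-y\|,\|x+y\|) = \operatorname{d}(x,y)$, and the second case is symmetric with $v$ in place of $u$. Either way, $\|\sqrt{\mathcal{A}}(x)-\sqrt{\mathcal{A}}(y)\| \geq \sigma\,\operatorname{d}(x,y)$, and $\alpha \geq \sigma$ follows by taking the infimum over $x \neq \pm y$.

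For the upper bound $\alpha \leq \sqrt{2}\sigma$, I would construct explicit near-colliding pairs. Since the set of partitions is finite, some $S_0$ attains $\sigma^2 = \min_S \max\{\lambda_{\min}(\Phi_S\Phi_S^*),\lambda_{\min}(\Phi_{S^\mathrm{c}}\Phi_{S^\mathrm{c}}^*)\}$; without loss of generality, $\lambda_{\min}(\Phi_{S_0}\Phi_{S_0}^*) \leq \lambda_{\min}(\Phi_{S_0^\mathrm{c}}\Phi_{S_0^\mathrm{c}}^*) = \sigma^2$. Pick unit vectors $u$ and $v$ attaining these two minimum eigenvalues, and set $x := (u+v)/2$ and $y := (v-u)/2$, so that $x-y = u$, $x+y = v$, and $\operatorname{d}(x,y) = \min(\|u\|,\|v\|) = 1$. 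A direct case check shows that for these specific $x,y$, the sign of $\langle x,\varphi_n\rangle\langle y,\varphi_n\rangle$ coincides with that of $|\langle v,\varphi_n\rangle|^2 - |\langle u,\varphi_n\rangle|^2$, so
\begin{equation*}
(|\langle x,\varphi_n\rangle|-|\langle y,\varphi_n\rangle|)^2 \;=\; \min\bigl\{|\langle u,\varphi_n\rangle|^2,\,|\langle v,\varphi_n\rangle|^2\bigr\}.
\end{equation*}
Upper bounding this minimum by $|\langle u,\varphi_n\rangle|^2$ on $S_0$ and by $|\langle v,\varphi_n\rangle|^2$ on $S_0^\mathrm{c}$ then yields $\|\sqrt{\mathcal{A}}(x)-\sqrt{\mathcal{A}}(y)\|^2 \leq \|\Phi_{S_0}^*u\|^2 + \|\Phi_{S_0^\mathrm{c}}^*v\|^2 \leq 2\sigma^2$, so $\alpha \leq \sqrt{2}\,\sigma$.

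The main subtlety I anticipate is that the sign partition $S$ is forced upon us by $x$ and $y$, not chosen freely: for the lower bound this is exactly what we want, since then $S$ is available for the SCP hypothesis; for the upper bound, we have to \emph{reverse-engineer} $x,y$ from a predetermined worst-case $S_0$ and its eigenvectors so that the induced sign partition interacts favorably with the chosen eigenvector bounds. The factor of $\sqrt{2}$ is essentially unavoidable here because the two terms $\|\Phi_{S_0}^*u\|^2$ and $\|\Phi_{S_0^\mathrm{c}}^*v\|^2$ both contribute $\sigma^2$, while only one of $\|u\|,\|v\|$ controls $\operatorname{d}(x,y)$.
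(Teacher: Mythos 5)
Your proposal is correct and follows essentially the same route as the paper: both directions hinge on the sign-partition identity relating $\|\sqrt{\mathcal{A}}(x)-\sqrt{\mathcal{A}}(y)\|^2$ to $\|\Phi_S^*(x-y)\|^2+\|\Phi_{S^\mathrm{c}}^*(x+y)\|^2$ and on extremal eigenvectors of $\Phi_S\Phi_S^*$ and $\Phi_{S^\mathrm{c}}\Phi_{S^\mathrm{c}}^*$. The only (cosmetic) differences are that you exploit the exact equality $(|\langle x,\varphi_n\rangle|-|\langle y,\varphi_n\rangle|)^2=\min\{|\langle u,\varphi_n\rangle|^2,|\langle v,\varphi_n\rangle|^2\}$ and the attainment of the defining minimum over the finitely many partitions, where the paper instead uses the reverse triangle inequality and an $\varepsilon$-limiting argument.
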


\begin{proof}
By analogy with the proof of Theorem~\ref{thm.complement property characterization}, we start by proving the upper bound.
Pick $\varepsilon>0$ and note that $\Phi$ is not $(\sigma+\varepsilon)$-SCP.
Then there exists $S\subseteq\{1,\ldots,N\}$ such that both $\lambda_{\mathrm{min}}(\Phi_S\Phi_S^*)<(\sigma+\varepsilon)^2$ and $\lambda_{\mathrm{min}}(\Phi_{S^\mathrm{c}}\Phi_{S^\mathrm{c}}^*)<(\sigma+\varepsilon)^2$.
This implies that there exist unit (eigen) vectors $u,v\in\mathbb{R}^M$ such that $\|\Phi_S^*u\|<(\sigma+\varepsilon)\|u\|$ and $\|\Phi_{S^\mathrm{c}}^*v\|<(\sigma+\varepsilon)\|v\|$.
Taking $x:=u+v$ and $y:=u-v$ then gives
\begin{align*}
\|\sqrt{\mathcal{A}}(x)-\sqrt{\mathcal{A}}(y)\|^2
&=\sum_{n=1}^N\big||\langle u+v,\varphi_n\rangle|-|\langle u-v,\varphi_n\rangle|\big|^2\\
&=\sum_{n\in S}\big||\langle u+v,\varphi_n\rangle|-|\langle u-v,\varphi_n\rangle|\big|^2+\sum_{n\in S^\mathrm{c}}\big||\langle u+v,\varphi_n\rangle|-|\langle u-v,\varphi_n\rangle|\big|^2\\
&\leq4\sum_{n\in S}|\langle u,\varphi_n\rangle|^2+4\sum_{n\in S^\mathrm{c}}|\langle v,\varphi_n\rangle|^2,
\end{align*}
where the last step follows from the reverse triangle inequality.
Next, we apply our assumptions on $u$ and $v$:
\begin{align*}
\|\sqrt{\mathcal{A}}(x)-\sqrt{\mathcal{A}}(y)\|^2
&\leq4\big(\|\Phi_S^*u\|^2+\|\Phi_{S^\mathrm{c}}^*v\|^2\big)\\
&<4(\sigma+\varepsilon)^2\big(\|u\|^2+\|v\|^2\big)
=8(\sigma+\varepsilon)^2\min\big\{\|u\|^2,\|v\|^2\big\}
=2(\sigma+\varepsilon)^2\big(\operatorname{d}(x,y)\big)^2.
\end{align*}
Thus, $\alpha<\sqrt{2}(\sigma+\varepsilon)$ for all $\varepsilon>0$, and so $\alpha\leq\sqrt{2}\sigma$.

Next, to prove the lower bound, take $\varepsilon>0$ and pick $x,y\in\mathbb{R}^M/\{\pm1\}$ such that 
\begin{equation*}
(\alpha+\varepsilon)\operatorname{d}(x,y)
>\|\sqrt{\mathcal{A}}(x)-\sqrt{\mathcal{A}}(y)\|.
\end{equation*}
We will show that $\Phi$ is not $(\alpha+\varepsilon)$-SCP.
To this end, pick $S:=\{n:\operatorname{sign}\langle x,\varphi_n\rangle=-\operatorname{sign}\langle y,\varphi_n\rangle\}$ and define $u:=x+y$ and $v:=x-y$.
Then the definition of $S$ gives
\begin{equation*}
\|\Phi_S^*u\|^2
=\sum_{n\in S}|\langle x,\varphi_n\rangle+\langle y,\varphi_n\rangle|^2
=\sum_{n\in S}\big||\langle x,\varphi_n\rangle|-|\langle y,\varphi_n\rangle|\big|^2,
\end{equation*}
and similarly $\|\Phi_{S^\mathrm{c}}^*v\|^2=\sum_{n\in S^\mathrm{c}}\big||\langle x,\varphi_n\rangle|-|\langle y,\varphi_n\rangle|\big|^2$.
Adding these together then gives
\begin{equation*}
\|\Phi_S^*u\|^2+\|\Phi_{S^\mathrm{c}}^*v\|^2
=\sum_{n=1}^N\big||\langle x,\varphi_n\rangle|-|\langle y,\varphi_n\rangle|\big|^2
=\|\sqrt{\mathcal{A}}(x)-\sqrt{\mathcal{A}}(y)\|^2
<(\alpha+\varepsilon)^2\big(\operatorname{d}(x,y)\big)^2,
\end{equation*}
implying both $\|\Phi_S^*u\|<(\alpha+\varepsilon)\|u\|$ and $\|\Phi_{S^\mathrm{c}}^*v\|<(\alpha+\varepsilon)\|v\|$.
Therefore, $\Phi$ is not $(\alpha+\varepsilon)$-SCP, i.e., $\sigma<\alpha+\varepsilon$ for all $\varepsilon>0$, which in turn implies the desired lower bound.
\qquad
\end{proof}

Note that all of this analysis specifically treats the real case; indeed, the metric we use would not be appropriate in the complex case.
However, just like the complement property is necessary for injectivity in the complex case (Theorem~\ref{thm.complex complement property}), we suspect that the strong complement property is necessary for stability in the complex case, but we have no proof of this.

As an example of how to apply Theorem~\ref{thm.bounding alpha}, pick $M$ and $N$ to both be even and let $F=\{f_n\}_{n\in\mathbb{Z}_N}$ be the $\frac{M}{2}\times N$ matrix you get by collecting the first $\frac{M}{2}$ rows of the $N\times N$ discrete Fourier transform matrix with entries of unit modulus.
Next, take $\Phi=\{\varphi_n\}_{n\in\mathbb{Z}_N}$ to be the $M\times N$ matrix you get by stacking the real and imaginary parts of $F$ and normalizing the resulting columns (i.e., multiplying by $\sqrt{2/M}$).
Then $\Phi$ happens to be a \textit{self-localized finite frame} due to the rapid decay in coherence between columns.
To be explicit, first note that 
\begin{align*}
|\langle \varphi_n,\varphi_{n'}\rangle|^2
&=\tfrac{4}{M^2}|\langle \operatorname{Re}f_n,\operatorname{Re}f_{n'}\rangle+\langle \operatorname{Im}f_n,\operatorname{Im}f_{n'}\rangle|^2\\
&\leq\tfrac{4}{M^2}\Big|\Big(\langle \operatorname{Re}f_n,\operatorname{Re}f_{n'}\rangle+\langle \operatorname{Im}f_n,\operatorname{Im}f_{n'}\rangle\Big)+i\Big(\langle \operatorname{Im}f_n,\operatorname{Re}f_{n'}\rangle-\langle \operatorname{Re}f_n,\operatorname{Im}f_{n'}\rangle\Big)\Big|^2\\
&=\tfrac{4}{M^2}|\langle f_n,f_{n'}\rangle|^2,
\end{align*}
and furthermore, when $n\neq n'$, the geometric sum formula gives
\begin{equation*}
|\langle f_n,f_{n'}\rangle|^2
=\bigg|\sum_{m=0}^{M-1}e^{2\pi im(n-n')/N}\bigg|^2
=\frac{\sin^2(M\pi(n-n')/N)}{\sin^2(\pi(n-n')/N)}
\leq\frac{1}{\sin^2(\pi(n-n')/N)}.
\end{equation*}
Taking $u:=\varphi_0$, $v:=\varphi_{N/2}$ and $S:=\{n:\frac{N}{4}\leq n<\frac{3N}{4}\}$, we then have
\begin{equation*}
\frac{\|\Phi_S^*u\|^2}{\|u\|^2}
=\|\Phi_S^*u\|^2
=\sum_{n\in S}|\langle\varphi_0,\varphi_n\rangle|^2
\leq\frac{4}{M^2}\sum_{n\in S}\frac{1}{\sin^2(\pi n/N)}
\leq\frac{4}{M^2}\cdot\frac{N/2}{\sin^2(\pi/4)}
=\frac{4N}{M^2},
\end{equation*}
and similarly for $\frac{\|\Phi_{S^\mathrm{c}}^*v\|^2}{\|v\|^2}$.
As such, if $N=o(M^2)$, then $\Phi$ is $\sigma$-SCP only if $\sigma$ vanishes, meaning phase retrieval with $\Phi$ necessarily lacks the stability guarantee of Theorem~\ref{thm.bounding alpha}.
As a rule of thumb, self-localized frames fail to provide stable phase retrieval for this very reason; just as we cannot stably distinguish between $\varphi_0+\varphi_{N/2}$ and $\varphi_0-\varphi_{N/2}$ in this case, in general, signals consisting of ``distant'' components bring similar instability.
This intuition was first pointed out to us by Irene Waldspurger---we simply made it more rigorous with the notion of SCP.
This means that stable phase retrieval from localized measurements must either use prior information about the signal (e.g., connected support) or additional measurements; indeed, this dichotomy has already made its mark on the Fourier-based phase retrieval literature~\cite{Fannjiang:12,JaganathanOH:12}.

We can also apply the strong complement property to show that certain (random) ensembles produce stable measurements.
We will use the following lemma, which is proved in the proof of Lemma~4.1 in~\cite{ChenD:05}:

\begin{lemma}
\label{lemma.random bound smallest eigenvalue}
Given $n\geq m\geq2$, draw a real $m\times n$ matrix $G$ of independent standard normal entries.
Then
\begin{equation*}
\operatorname{Pr}\bigg(\lambda_\mathrm{min}(GG^*)\leq\frac{n}{t^2}\bigg)\leq\frac{1}{\Gamma(n-m+2)}\bigg(\frac{n}{t}\bigg)^{n-m+1}
\qquad
\forall t>0.
\end{equation*}
\end{lemma}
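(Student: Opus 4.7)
The plan is to invoke the classical joint density of eigenvalues for the real Wishart distribution and integrate directly. Since $G$ has iid standard normal entries with $n \geq m$, the matrix $GG^*$ is distributed as the real Wishart $W_m(n, I_m)$, and its $m$ ordered positive eigenvalues $\lambda_1 \geq \cdots \geq \lambda_m > 0$ admit a joint density proportional to
\[ \prod_{i=1}^m \lambda_i^{(n-m-1)/2} e^{-\lambda_i/2} \prod_{1 \leq i < j \leq m}(\lambda_i - \lambda_j), \]
with an explicit gamma-function normalization. The event $\{\lambda_{\min}(GG^*) \leq s\}$ with $s = n/t^2$ coincides with $\{\lambda_m \leq s\}$, so the strategy is to integrate the above density over this region and bound the result by $(n/t)^{n-m+1}/\Gamma(n-m+2)$.

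First I would perform the substitution $\mu_i := \lambda_i - \lambda_m$ for $i < m$, which shifts the outer $(m-1)$-fold integration to $\mu_1 \geq \cdots \geq \mu_{m-1} \geq 0$ and leaves a one-dimensional integration over $\lambda_m \in [0, s]$. Using the crude bound $e^{-\lambda_m/2} \leq 1$, the radial piece evaluates to
\[ \int_0^s \lambda_m^{(n-m-1)/2} e^{-\lambda_m/2}\, d\lambda_m \leq \frac{2\, s^{(n-m+1)/2}}{n-m+1}, \]
and plugging $s = n/t^2$ turns $s^{(n-m+1)/2}$ into $n^{(n-m+1)/2} t^{-(n-m+1)}$. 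The remaining $(m-1)$-fold integral over the $\mu_i$, together with the polynomial factors $(\mu_i + \lambda_m)^{(n-m-1)/2}$ and the shifted Vandermonde, should reduce via a Selberg-type identity to an explicit gamma-function expression; combining it with the Wishart partition function produces the remaining $n^{(n-m+1)/2}$ and leaves $1/\Gamma(n-m+2)$ after cancellations.

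The main obstacle is the bookkeeping of the gamma-function constants: the Wishart normalization and the auxiliary Selberg-type integral each involve intricate products of gammas, and the cancellations must yield exactly $1/\Gamma(n-m+2)$ with no spurious polynomial-in-$n$ factors. A cleaner alternative, which I suspect is closer to Chen and Dongarra's route, is to use $1/\lambda_{\min}(GG^*) = \|(GG^*)^{-1}\|_{\mathrm{op}} \leq \operatorname{tr}((GG^*)^{-1})$ on positive-definite matrices and then apply Markov's inequality to the $\tfrac{n-m+1}{2}$-th moment of $\operatorname{tr}((GG^*)^{-1})$, exploiting known closed forms for moments of the inverse Wishart; this trades the Vandermonde integration for an inverse-Wishart moment computation but reduces to the same ultimate gamma-function identity $\Gamma(n-m+2) = (n-m+1)\Gamma(n-m+1)$.
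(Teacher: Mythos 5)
The paper does not actually prove this lemma; it imports it wholesale, citing the proof of Lemma~4.1 in~\cite{ChenD:05}, so the only meaningful comparison is with Chen and Dongarra's argument, which your first route does shadow (joint real Wishart eigenvalue density, drop the exponential, reduce the residual multiple integral to a Selberg-type normalization constant). The difficulty is that your proposal stops exactly where the proof begins. The entire content of the lemma is the exact constant $1/\Gamma(n-m+2)$ and the exact exponent $n-m+1$, and you defer both to an unverified assertion that a Selberg-type identity ``should'' produce them with no spurious polynomial-in-$n$ factors --- you yourself flag this bookkeeping as the main obstacle. There is also an internal inconsistency in the order of integration: after the substitution $\mu_i=\lambda_i-\lambda_m$, the integrand still contains the coupled factors $(\mu_i+\lambda_m)^{(n-m-1)/2}$ (for $n>m+1$ these are nonconstant in $\lambda_m$), so you cannot first evaluate the ``radial piece'' $\int_0^s\lambda_m^{(n-m-1)/2}e^{-\lambda_m/2}\,d\lambda_m$ in isolation and then perform the $(m-1)$-fold $\mu$-integral. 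The standard fix, and as far as I can tell the route actually taken in~\cite{ChenD:05}, is to bound the Vandermonde via $\prod_{i<m}(\lambda_i-\lambda_m)\leq\prod_{i<m}\lambda_i$ on the ordered region; then the $\lambda_m$-integral genuinely separates, and the remaining integral over $\lambda_1\geq\cdots\geq\lambda_{m-1}\geq0$ is the normalization constant of a parameter-shifted Laguerre ensemble, which is where the gamma functions get accounted for. Without carrying out that computation, nothing has been proved.

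Your proposed alternative is not merely incomplete; it fails. Since $\operatorname{tr}\big((GG^*)^{-1}\big)\geq1/\lambda_{\mathrm{min}}(GG^*)$ and the marginal density of $\lambda_{\mathrm{min}}$ behaves like $c\,\lambda^{(n-m-1)/2}$ near the origin, the moment $\mathbb{E}\big[\lambda_{\mathrm{min}}^{-k}\big]$ is finite only for $k<\tfrac{n-m+1}{2}$. The $\tfrac{n-m+1}{2}$-th moment you want to feed into Markov's inequality is exactly the critical one and is infinite, so that route cannot recover the exponent $n-m+1$ in $t$; any admissible lower moment yields a strictly weaker power of $t$ than the lemma claims.
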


\begin{theorem}
\label{theorem.random scp bound}
Draw an $M\times N$ matrix $\Phi$ with independent standard normal entries, and denote $R=\frac{N}{M}$.
Provided $R>2$, then for every $\varepsilon>0$, $\Phi$ has the $\sigma$-strong complement property with
\begin{equation*}
\sigma
=\frac{1}{\sqrt{2}e^{1+\varepsilon/(R-2)}}\cdot\frac{N-2M+2}{2^{R/(R-2)}\sqrt{N}},
\end{equation*}
with probability $\geq1-e^{-\varepsilon M}$.
\end{theorem}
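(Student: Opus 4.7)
The plan is to take a union bound over all bipartitions $(S,S^\mathrm{c})$ of $\{1,\ldots,N\}$ of the failure event of the $\sigma$-strong complement property, namely that there exists some $S$ with both $\lambda_\mathrm{min}(\Phi_S\Phi_S^*)<\sigma^2$ and $\lambda_\mathrm{min}(\Phi_{S^\mathrm{c}}\Phi_{S^\mathrm{c}}^*)<\sigma^2$. The key observation is that, since $\Phi_S$ and $\Phi_{S^\mathrm{c}}$ consist of disjoint columns of the Gaussian $\Phi$, these two events are \emph{independent}, so for each partition the failure probability factors as a \emph{product} of two tail bounds rather than a single one. This product structure is what ultimately makes the union bound tight enough to beat the combinatorial factor $\binom{N}{|S|}$.

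For each $S$ with $k:=|S|\geq M$, Lemma~\ref{lemma.random bound smallest eigenvalue} applied to the $M\times k$ Gaussian $\Phi_S$ with the choice $t=\sqrt{k}/\sigma$ gives
\begin{equation*}
\Pr\bigl(\lambda_\mathrm{min}(\Phi_S\Phi_S^*)\leq\sigma^2\bigr)\leq\frac{(\sqrt{k}\,\sigma)^{k-M+1}}{(k-M+1)!},
\end{equation*}
and symmetrically for $\Phi_{S^\mathrm{c}}$; the lopsided cases $|S|<M$ or $|S^\mathrm{c}|<M$ are handled by bounding the degenerate factor trivially by $1$ and letting the other carry the decay. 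Combining independence with the union bound yields
\begin{equation*}
\Pr(\Phi \text{ fails } \sigma\text{-SCP})\leq \frac{1}{2}\sum_{k=0}^{N}\binom{N}{k}\,\frac{(\sqrt{k}\,\sigma)^{k-M+1}}{(k-M+1)!}\cdot\frac{(\sqrt{N-k}\,\sigma)^{N-k-M+1}}{(N-k-M+1)!}.
\end{equation*}

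The balanced term $k=\lceil N/2\rceil$ should dominate. Setting $L:=N-2M+2$ and applying Stirling's bound $n!\geq(n/e)^n$ to both factorials, the central product simplifies to at most $\beta^L$ with $\beta:=\sqrt{2}\,e\sigma\sqrt{N}/L$. Substituting the stated value of $\sigma$ gives $\beta=2^{-R/(R-2)}e^{-\varepsilon/(R-2)}$, so $\beta^{R-2}=2^{-R}e^{-\varepsilon}$ and hence $\beta^{L}=\beta^{(R-2)M+2}=\beta^2\cdot 2^{-RM}e^{-\varepsilon M}$. The factor $2^{-RM}$ precisely cancels $\binom{N}{\lceil N/2\rceil}\leq 2^{N}=2^{RM}$, leaving the central contribution bounded by $\tfrac{1}{2}\beta^2 e^{-\varepsilon M}\leq e^{-\varepsilon M}$. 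The specific constant $2^{R/(R-2)}$ in the denominator of $\sigma$ was reverse-engineered precisely to make this cancellation succeed.

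The principal obstacle is the remaining bookkeeping: showing that the contributions from all other $k$ (including the lopsided regimes $k<M$ and $k>N-M$) are dominated by a small constant multiple of the central term. Log-convexity of $\log\Gamma$ ensures that $(k-M+1)!(N-k-M+1)!$ is minimized at $k=N/2$, and $\binom{N}{k}$ is also maximized there, strongly suggesting unimodality of the summand; comparing consecutive ratios should confirm this and let one bound the entire sum by $O(1)$ times the central term. The extra polynomial prefactors coming from the sharper Stirling estimate $n!\geq\sqrt{2\pi n}(n/e)^n$ can then be absorbed into the slack $\beta^2\ll 1$ in the central bound, completing the overall estimate of $e^{-\varepsilon M}$.
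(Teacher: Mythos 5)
Your architecture is exactly the paper's: union bound over partitions, independence of $\lambda_\mathrm{min}(\Phi_S\Phi_S^*)$ and $\lambda_\mathrm{min}(\Phi_{S^\mathrm{c}}\Phi_{S^\mathrm{c}}^*)$, Lemma~\ref{lemma.random bound smallest eigenvalue} with $t=\sqrt{k}/\sigma$, reduction to the balanced term $k=N/2$, and then Stirling plus the engineered constant $2^{R/(R-2)}$ to cancel $2^{RM}$ against $\binom{N}{k}\le 2^N$. The endgame cancellation is computed correctly. But the step you defer as ``remaining bookkeeping'' is the actual crux, and the justification you sketch for it is not sufficient as stated. Writing $f(k):=\frac{1}{\Gamma(k-M+2)}(\sigma\sqrt{k})^{k-M+1}$, you need $f(k)f(N-k)\leq f(N/2)^2$ for \emph{all} admissible $k$. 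Log-convexity of $\Gamma$ does show that the factorial product in the denominator is minimized at $k=N/2$, which pushes the summand up at the center --- but the numerator works in the \emph{opposite} direction: a direct computation gives $\frac{d^2}{dk^2}\log\bigl((\sigma\sqrt{k})^{k-M+1}\bigr)=\frac{1}{2k}+\frac{M-1}{2k^2}>0$, so the power factor is log-convex in $k$ and is maximized at the lopsided ends of the range, not the center. Unimodality of $f(k)f(N-k)$ with peak at $k=N/2$ therefore requires showing that the concavity contributed by $-\log\Gamma(k-M+2)$ strictly dominates this convexity, i.e.\ that $\psi_1(k-M+2)>\frac{1}{2k}+\frac{M-1}{2k^2}$. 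This is precisely the inequality the paper proves via $\frac{1}{2y}+\frac{M-1}{2y^2}\leq\frac{1}{y-M+2}<\sum_{j=0}^\infty(y-M+2+j)^{-2}=\psi_1(y-M+2)$; without it, ``comparing consecutive ratios'' has no guarantee of confirming unimodality, and the whole bound could fail at the boundary terms.

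A second, smaller issue is your treatment of the lopsided regime $|S^\mathrm{c}|<M$. There the exponent on the surviving factor is $N-k-M+1>N-2M+2=L$, so this term is not literally of the form $\beta^L$ and does not follow from the central-term analysis; you need a separate comparison. The paper handles it by noting that when $|S|\geq N-M+1$ the complement automatically has $\lambda_\mathrm{min}=0$, reducing by monotonicity to $|S|=N-M+1$, and then proving $f(N-M+1)\leq f(N/2)^2$ directly using $\binom{2k}{k}\geq 2^k$. Your trivial bound of the degenerate factor by $1$ can be made to work (the surviving factor is indeed small enough), but it is an additional estimate you would have to carry out explicitly rather than absorb into the central term.
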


\begin{proof}
Fix $M$ and $N$, and consider the function $f\colon(M-2,\infty)\rightarrow(0,\infty)$ defined by
\begin{equation*}
f(x):=\frac{1}{\Gamma(x-M+2)}(\sigma\sqrt{x})^{x-M+1}.
\end{equation*}
To simplify our analysis, we will assume that $N$ is even, but the proof can be amended to account for the odd case.
Applying Lemma~\ref{lemma.random bound smallest eigenvalue}, we have for every subset $S\subseteq\{1,\ldots,N\}$ of size $K$ that $\operatorname{Pr}(\lambda_\mathrm{min}(\Phi_S\Phi_S^*)<\sigma^2)\leq f(K)$, provided $K\geq M$, and similarly $\operatorname{Pr}(\lambda_\mathrm{min}(\Phi_{S^\mathrm{c}}\Phi_{S^\mathrm{c}}^*)<\sigma^2)\leq f(N-K)$, provided $N-K\geq M$.
We will use this to bound the probability that $\Phi$ is not $\sigma$-SCP.
Since $\lambda_\mathrm{min}(\Phi_{S^\mathrm{c}}\Phi_{S^\mathrm{c}}^*)=0$ whenever $|S|\geq N-M+1$ and $\lambda_\mathrm{min}(\Phi_S\Phi_S^*)\leq\lambda_\mathrm{min}(\Phi_T\Phi_T^*)$ whenever $S\subseteq T$, then a union bound gives
\begin{align}
\nonumber
&\operatorname{Pr}\Big(\Phi\mbox{ is not $\sigma$-SCP}\Big)\\
\nonumber
&=\operatorname{Pr}\Big(\exists S\subseteq\{1,\ldots,N\}\mbox{ s.t. }\lambda_\mathrm{min}(\Phi_S\Phi_S^*)<\sigma^2\mbox{ and }\lambda_\mathrm{min}(\Phi_{S^\mathrm{c}}\Phi_{S^\mathrm{c}}^*)<\sigma^2\Big)\\
\nonumber
&\leq\operatorname{Pr}\Big(\exists S\subseteq\{1,\ldots,N\},|S|=N-M+1,\mbox{ s.t. }\lambda_\mathrm{min}(\Phi_S\Phi_S^*)<\sigma^2\Big)\\
\nonumber
&\qquad+\operatorname{Pr}\Big(\exists S\subseteq\{1,\ldots,N\},M\leq|S|\leq N-M,\mbox{ s.t. }\lambda_\mathrm{min}(\Phi_S\Phi_S^*)<\sigma^2\mbox{ and }\lambda_\mathrm{min}(\Phi_{S^\mathrm{c}}\Phi_{S^\mathrm{c}}^*)<\sigma^2\Big)\\
\label{eq.f expression to bound}
&\leq\binom{N}{N-M+1}f(N-M+1)+\frac{1}{2}\sum_{K=M}^{N-M}\binom{N}{K}f(K)f(N-K),
\end{align}
where the last inequality follows in part from the fact that $\lambda_\mathrm{min}(\Phi_S\Phi_S^*)$ and $\lambda_\mathrm{min}(\Phi_{S^\mathrm{c}}\Phi_{S^\mathrm{c}}^*)$ are independent random variables, and the factor $\frac{1}{2}$ is an artifact of double counting partitions.
We will further bound each term in \eqref{eq.f expression to bound} to get a simpler expression.
First, $\binom{2k}{k}\geq 2^k$ for all $k$ and so 
\begin{align*}
f(N-M+1)
&\leq\frac{1}{\Gamma(N-2M+3)}(\sigma\sqrt{N})^{N-2M+2}\\
&\leq\frac{1}{\Gamma(N-2M+3)}(\sigma\sqrt{N})^{N-2M+2}\cdot\frac{1}{2^{\frac{N}{2}-M+1}}\binom{N-2M+2}{\frac{N}{2}-M+1}
=f(\tfrac{N}{2})^2
\end{align*}
Next, we will find that $g(x):=f(x)f(N-x)$ is maximized at $x=\frac{N}{2}$.
To do this, we first find the critical points of $g$.
Since $0=g'(x)=f'(x)f(N-x)-f(x)f'(N-x)$, we have
\begin{equation}
\label{eq.derivative of log f}
\frac{d}{dy}\log f(y)\bigg|_{y=x}
=\frac{f'(x)}{f(x)}
=\frac{f'(N-x)}{f(N-x)}
=\frac{d}{dy}\log f(y)\bigg|_{y=N-x}.
\end{equation}
To analyze this further, we take another derivative:
\begin{equation}
\label{eq.second derivative of log f}
\frac{d^2}{dy^2}\log f(y)
=\frac{1}{2y}+\frac{M-1}{2y^2}-\frac{d^2}{dy^2}\log\Gamma(y-M+2).
\end{equation}
It is straightforward to see that
\begin{equation*}
\frac{1}{2y}+\frac{M-1}{2y^2}
\leq\frac{1}{y-M+2}
=\int_{y-M+2}^\infty\frac{dt}{t^2}
<\sum_{k=0}^\infty\frac{1}{(y-M+2+k)^2}
=\frac{d^2}{dy^2}\log\Gamma(y-M+2),
\end{equation*}
where the last step uses a series expression for the trigamma function $\psi_1(z):=\frac{d^2}{dz^2}\log\Gamma(z)$; see Section~6.4 of~\cite{AbramowitzS:64}.
Applying this to \eqref{eq.second derivative of log f} then gives that $\frac{d^2}{dy^2}\log f(y)<0$, which in turn implies that $\frac{d}{dy}\log f(y)$ is strictly decreasing in $y$.
Thus, \eqref{eq.derivative of log f} requires $x=N-x$, and so $x=\frac{N}{2}$ is the only critical point of $g$.
Furthermore, to see that this is a maximizer, notice that
\begin{equation*}
g''(\tfrac{N}{2})
=2f(\tfrac{N}{2})^2\cdot\frac{f''(\tfrac{N}{2})f(\tfrac{N}{2})-f'(\tfrac{N}{2})^2}{f(\tfrac{N}{2})^2}
=2f(\tfrac{N}{2})^2\cdot\frac{d}{dy}\frac{f'(y)}{f(y)}\bigg|_{y=\frac{N}{2}}
=2f(\tfrac{N}{2})^2\cdot\frac{d^2}{dy^2}\log f(y)\bigg|_{y=\frac{N}{2}}
<0.
\end{equation*}
To summarize, we have that $f(N-M+1)$ and $f(K)f(N-K)$ are both at most $f(\tfrac{N}{2})^2$.
This leads to the following bound on \eqref{eq.f expression to bound}:
\begin{equation*}
\operatorname{Pr}\Big(\Phi\mbox{ is not $\sigma$-SCP}\Big)
\leq\frac{1}{2}\sum_{K=0}^{N}\binom{N}{K}f(\tfrac{N}{2})^2
=2^{N-1}f(\tfrac{N}{2})^2
=\frac{2^{N-1}}{\Gamma(\frac{N}{2}-M+2)^2}\Big(\sigma\sqrt{\tfrac{N}{2}}\Big)^{N-2M+2}.
\end{equation*}
Finally, applying the fact that $\Gamma(k+1)\geq e(\frac{k}{e})^k$ gives
\begin{align*}
\operatorname{Pr}\Big(\Phi\mbox{ is not $\sigma$-SCP}\Big)
&\leq\frac{2^{N-1}}{e^2}\bigg(\sigma e\sqrt{2}\cdot\frac{\sqrt{N}}{N-2M+2}\bigg)^{N-2M+2}\\
&=\frac{2^{RM}}{2e^2}\Big(e^{-\varepsilon/(R-2)}2^{-R/(R-2)}\Big)^{(R-2)M+2}
\leq 2^{RM}(e^{\varepsilon}2^R)^{-M}
=e^{-\varepsilon M},
\end{align*}
as claimed.
\qquad
\end{proof}

Considering $\|\Phi^*\|_2\leq(1+\varepsilon)(\sqrt{N}+\sqrt{M})$ with probability $\geq1-2e^{-\varepsilon(\sqrt{N}^2+\sqrt{M})^2/2}$ (see Theorem~II.13 of~\cite{DavidsonS:01}), we can leverage Theorem~\ref{theorem.random scp bound} to determine the stability of a Gaussian measurement ensemble.
Specifically, we have by Theorem~\ref{thm.bilipschitz} along with Lemma~\ref{lemma.beta expression} and Theorem~\ref{thm.bounding alpha} that such measurements are $C$-stable with
\begin{equation}
\label{eq.bounds on random C}
C=\frac{2\beta}{\alpha}
\leq\frac{2\|\Phi^*\|_2}{\sigma}
\sim \underbrace{2(\sqrt{N}+\sqrt{M})\cdot\sqrt{2}e\cdot\frac{2^{R/(R-2)}\sqrt{N}}{N-2M+2}}_{a(R,M)}
\leq \underbrace{2\sqrt{2}e\bigg(\frac{R+\sqrt{R}}{R-2}\bigg)2^{R/(R-2)}}_{b(R)}
\end{equation}
Figure~\ref{figure.curves} illustrates these bounds along with different realizations of $2\|\Phi^*\|_2/\sigma$.
This suggests that the redundancy of the measurement process is the main factor that determines stability of a random measurement ensemble (and that bounded redundancies suffice for stability).
Furthermore, the project-and-invert estimator will yield particularly stable signal reconstruction, although it is not obvious how to efficiently implement this estimator; this is one advantage given by the reconstruction algorithms in~\cite{AlexeevBFM:12,CandesSV:11}.

\begin{figure}[t]
\label{figure.curves}
\begin{center}
\begin{tabular}{cccc}
\includegraphics[height=150pt]{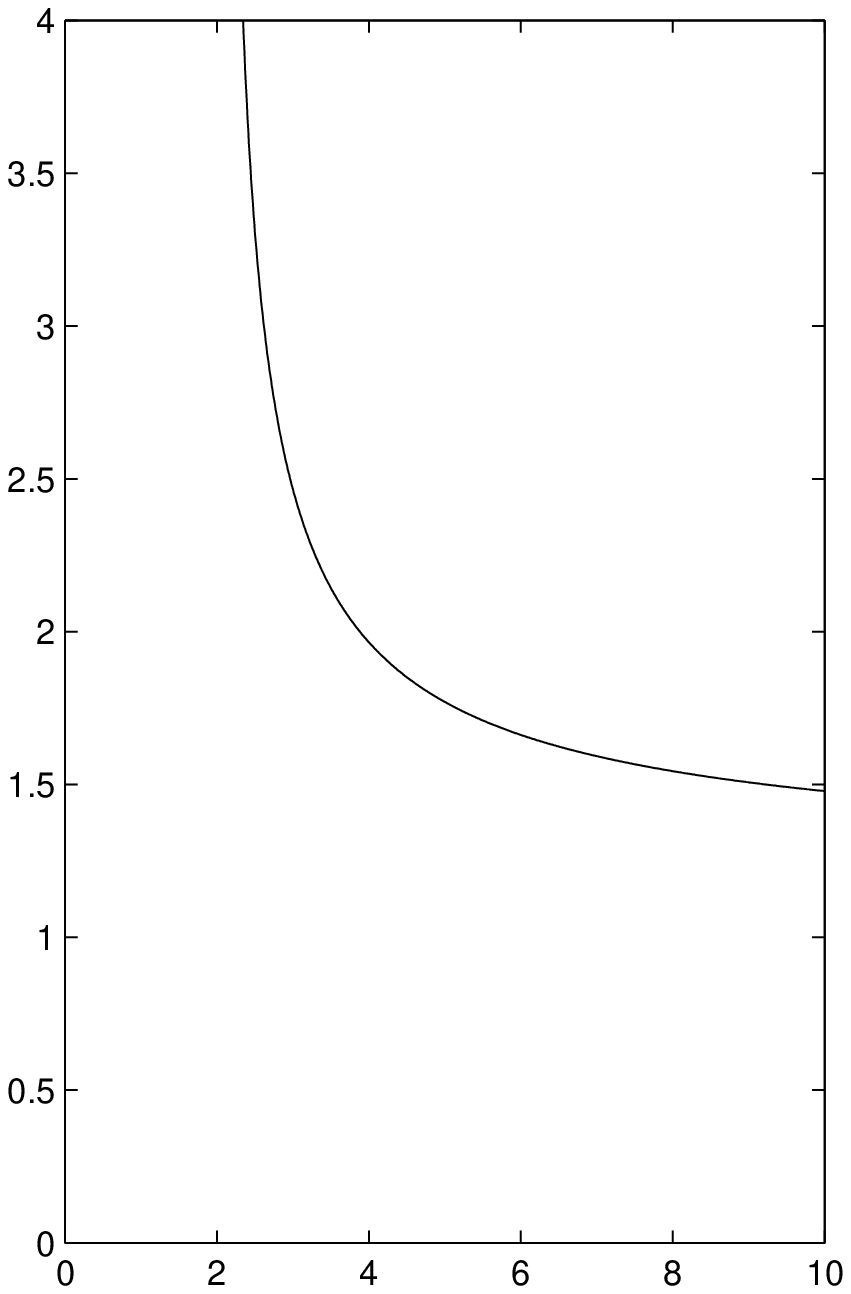}&
\includegraphics[height=150pt]{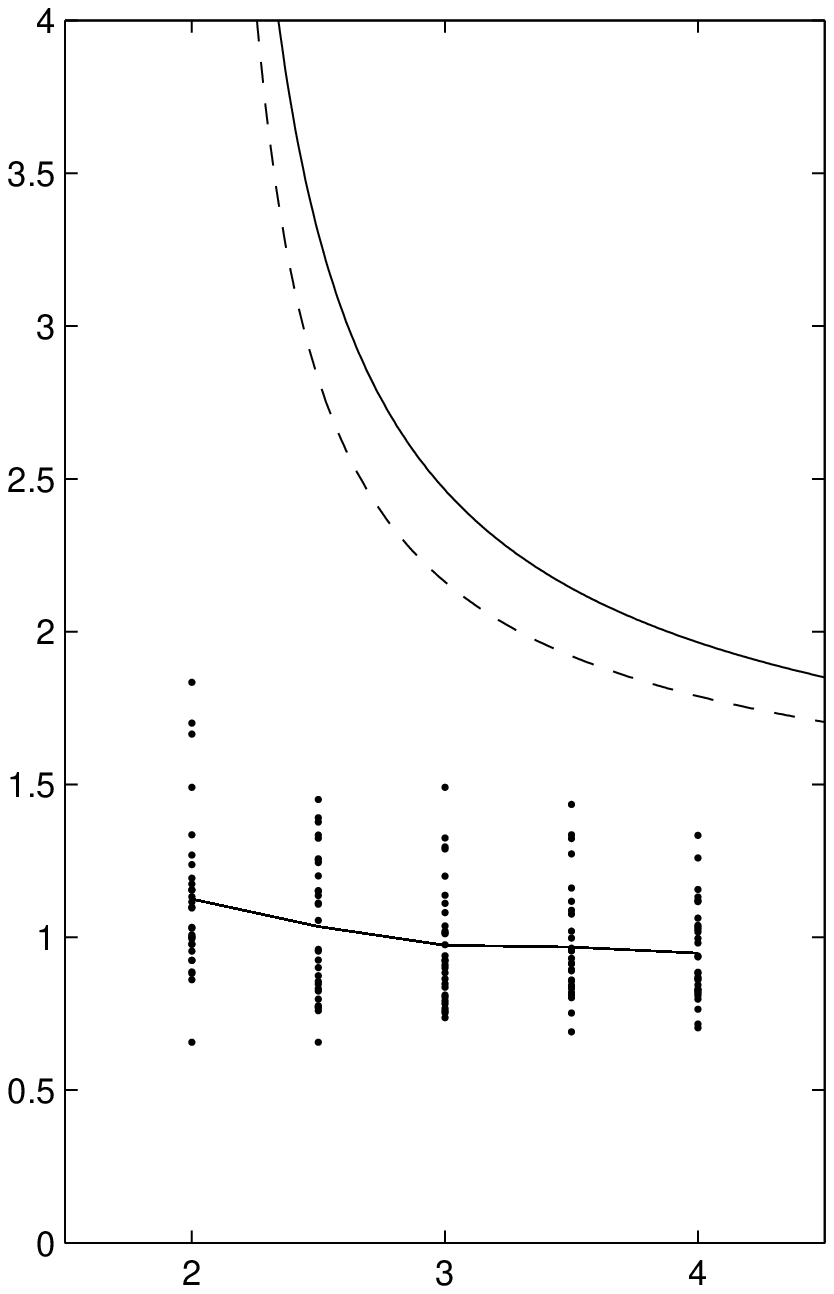}&
\includegraphics[height=150pt]{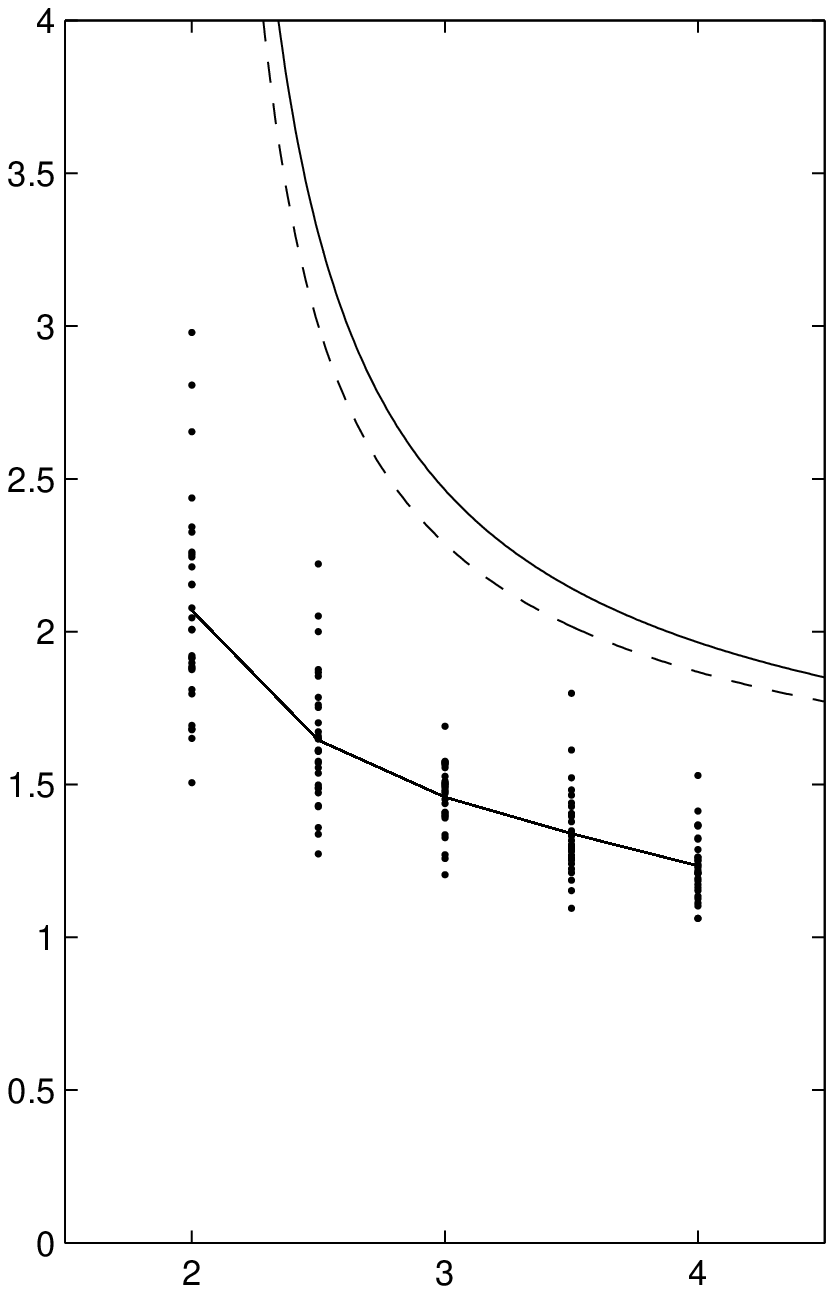}&
\includegraphics[height=150pt]{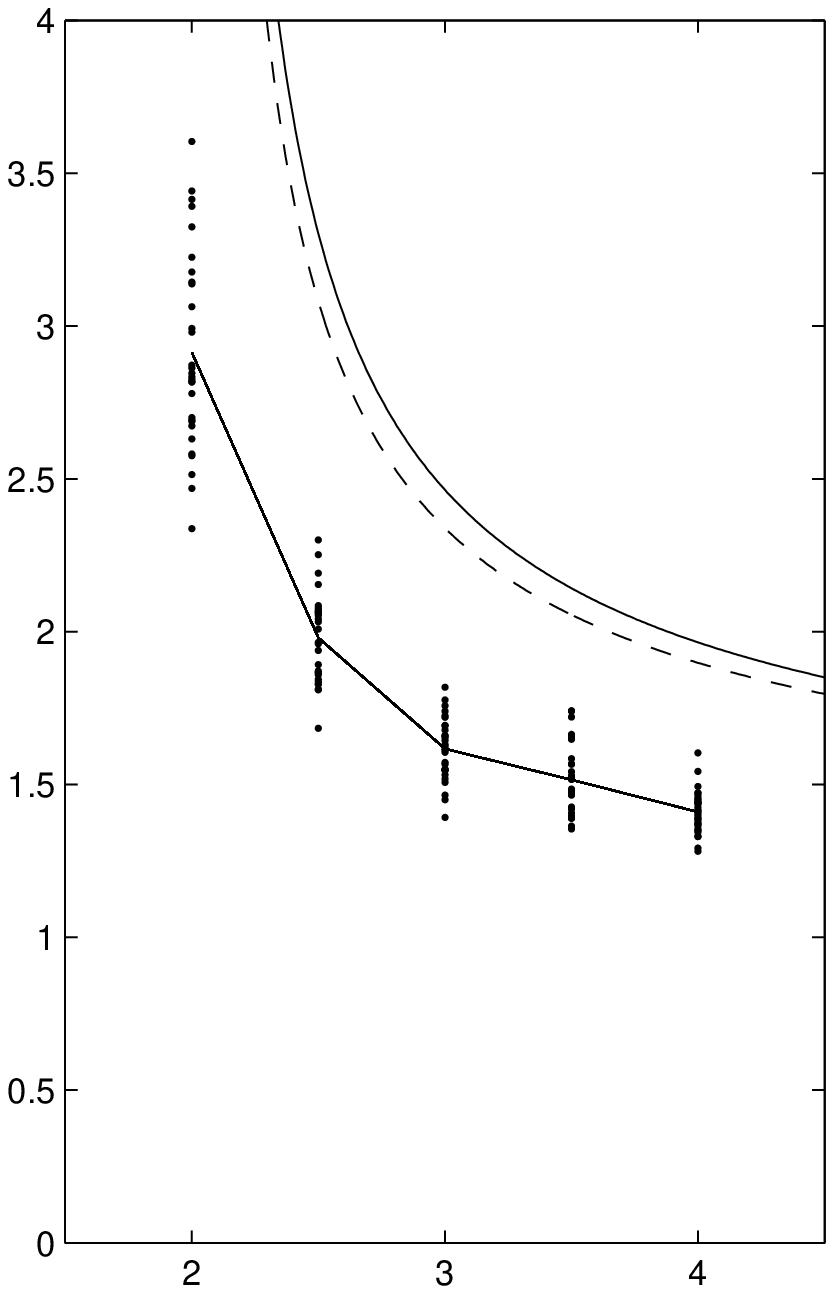}
\end{tabular}
\end{center}
\caption{
The graph on the left depicts $\log_{10} b(R)$ as a function of $R$, which is defined in \eqref{eq.bounds on random C}.
Modulo $\varepsilon$ terms, this serves as an upper bound on $\log_{10}(2\|\Phi^*\|_2/\sigma)$ with high probability as $M\rightarrow\infty$, where $\Phi$ is an $M\times RM$ matrix of independent standard Gaussian entries.
Based on Theorem~\ref{thm.bilipschitz} (along with Lemma~\ref{lemma.beta expression} and Theorem~\ref{thm.bounding alpha}), this provides a stability guarantee for the corresponding measurement process, namely $\sqrt{\mathcal{A}}$.
Since $\log_{10} b(R)$ exhibits an asymptote at $R=2$, this gives no stability guarantee for measurement ensembles of redundancy $2$.
The next three graphs consider the special cases where $M=2,4,6$, respectively.
In each case, the dashed curve depicts the slightly stronger upper bound of $\log_{10} a(R,M)$, defined in \eqref{eq.bounds on random C}.
Also depicted, for each $R\in\{2,2.5,3,3.5,4\}$, are $30$ realizations of $\log_{10}(2\|\Phi^*\|_2/\sigma)$; we provide a piecewise linear graph connecting the sample averages for clarity.
Notice that as $M$ increases, $\log_{10} a(R,M)$ approaches $\log_{10} b(R)$; this is easily seen by their definitions in \eqref{eq.bounds on random C}.
More interestingly, the random realizations also appear to be approaching $\log_{10} b(R)$; this is most notable with the realizations corresponding to $R=2$.
To be clear, we use $\sigma$ as a proxy for $\alpha$ (see Theorem~\ref{thm.bounding alpha}) because $\alpha$ is particularly difficult to obtain; as such, we do not plot realizations of $\log_{10}(2\beta/\alpha)$.
}
\end{figure}

\subsection{Stability in the average case}

Suppose a random variable $Y$ is drawn according to some unknown member of a parameterized family of probability density functions $\{f(\cdot;\theta)\}_{\theta\in\Omega}$.
The Fisher information $J(\theta)$ quantifies how much information about the unknown parameter $\theta$ is given by the random variable on average.
This is particularly useful in statistical signal processing, where a signal measurement is corrupted by random noise, and the original signal is viewed as a parameter of the random measurement's unknown probability density function; as such, the Fisher information quantifies how useful the noisy measurement is for signal estimation.

In this section, we will apply the theory of Fisher information to evaluate the stability of $\mathcal{A}$.
To do this, we consider a stochastic noise model, that is, given some signal $x$, we take measurements of the form $Y=\mathcal{A}(x)+Z$, where the entries of $Z$ are independent Gaussian random variables with mean $0$ and variance $\sigma^2$.
We want to use $Y$ to estimate $x$ up to a global phase factor; to simplify the analysis, we will estimate a particular $\theta(x)\equiv x$, specifically (and arbitrarily) $x$ divided by the phase of its last nonzero entry.
As such, $Y$ is a random vector with probability density function
\begin{equation*}
f(y;\theta)
=\frac{1}{(2\pi \sigma^2)^{N/2}}e^{-\|y-\mathcal{A}(\theta)\|^2/2\sigma^2}
\qquad
\forall y\in\mathbb{R}^N.
\end{equation*}
With this, we can calculate the Fisher information matrix, defined entrywise by
\begin{equation}
\label{eq.fisher information}
\big(J(\theta)\big)_{ij}
:=\mathbb{E}\bigg[\bigg(\frac{\partial}{\partial\theta_i}\log f(Y;\theta)\bigg)\bigg(\frac{\partial}{\partial\theta_j}\log f(Y;\theta)\bigg)\bigg|\theta\bigg].
\end{equation}
In particular, we have
\begin{equation*}
\frac{\partial}{\partial\theta_i}\log f(y;\theta)
=\frac{\partial}{\partial\theta_i}\bigg(-\frac{1}{2\sigma^2}\sum_{n=1}^N\Big(y_n-\big(\mathcal{A}(\theta)\big)_n\Big)^2\bigg)
=\frac{1}{\sigma^2}\sum_{n=1}^N\Big(y_n-\big(\mathcal{A}(\theta)\big)_n\Big)\frac{\partial}{\partial\theta_i}\big(\mathcal{A}(\theta)\big)_n,
\end{equation*}
and so applying \eqref{eq.fisher information} along with the independence of the entries of $Z$ gives
\begin{equation*}
\big(J(\theta)\big)_{ij}
=\frac{1}{\sigma^4}\sum_{n=1}^N\sum_{n'=1}^N\frac{\partial}{\partial\theta_i}\big(\mathcal{A}(\theta)\big)_n\frac{\partial}{\partial\theta_j}\big(\mathcal{A}(\theta)\big)_{n'}\mathbb{E}[Z_nZ_{n'}]
=\frac{1}{\sigma^2}\sum_{n=1}^N\frac{\partial}{\partial\theta_i}\big(\mathcal{A}(\theta)\big)_n\frac{\partial}{\partial\theta_j}\big(\mathcal{A}(\theta)\big)_{n}.
\end{equation*}
It remains to take partial derivatives of $\mathcal{A}(\theta)$, but this calculation depends on whether $\theta$ is real or complex.

In the real case, we have
\begin{equation*}
\frac{\partial}{\partial\theta_i}\big(\mathcal{A}(\theta)\big)_n
=\frac{\partial}{\partial\theta_i}\bigg(\sum_{m=1}^M\theta_m\varphi_n(m)\bigg)^2
=2\bigg(\sum_{m=1}^M\theta_m\varphi_n(m)\bigg)\varphi_n(i).
\end{equation*}
Thus, if we take $\Psi(\theta)$ to be the $M\times N$ matrix whose $n$th column is $\langle\theta,\varphi_n\rangle \varphi_n$, then the Fisher information matrix can be expressed as $J(\theta)=\frac{4}{\sigma^2}\Psi(\theta)\Psi(\theta)^*$.
Interestingly, Theorem~\ref{thm.complement property characterization} implies that $J(\theta)$ is necessarily positive definite when $\mathcal{A}$ is injective.
To see this, suppose there exists $\theta\in\Omega$ such that $J(\theta)$ has a nontrivial null space.
Then $\{\langle\theta,\varphi_n\rangle \varphi_n\}_{n=1}^N$ does not span $\mathbb{R}^M$, and so $S=\{n:\langle\theta,\varphi_n\rangle=0\}$ breaks the complement property.
As the following result shows, when $\mathcal{A}$ is injective, the conditioning of $J(\theta)$ lends some insight into stability:

\begin{theorem}
\label{thm.real random stability}
For $x\in\mathbb{R}^M$, let $Y=\mathcal{A}(x)+Z$ denote noisy intensity measurements with $Z$ having independent $\mathcal{N}(0,\sigma^2)$ entries.
Furthermore, define the parameter $\theta$ to be $x$ divided by the sign of its last nonzero entry; let $\Omega\subseteq\mathbb{R}^{M}$ denote all such $\theta$.
Then for any unbiased estimator $\hat{\theta}(Y)$ of $\theta$ in $\Omega$ with a finite $M\times M$ covariance matrix $C(\hat{\theta})$, we have $C(\hat{\theta})-J(\theta)^{-1}$ is positive semidefinite whenever $\theta\in\operatorname{int}(\Omega)$. 
\end{theorem}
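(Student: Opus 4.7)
The plan is to recognize this as an instance of the classical multivariate Cram\'{e}r--Rao inequality and to verify that all requisite regularity conditions hold in our setting. Since the noise is Gaussian and the map $\theta \mapsto \mathcal{A}(\theta)$ is polynomial, the density $f(y;\theta)$ is $C^\infty$ in $\theta$, and at any $\theta \in \operatorname{int}(\Omega)$ there is a full $M$-dimensional neighborhood over which $\theta$ varies, so partial derivatives in each coordinate direction are meaningful. Together with the Gaussian tail decay, this lets me differentiate under the integral via dominated convergence, guaranteeing that the score $s(\theta) := \nabla_\theta \log f(Y;\theta)$ is mean zero under $\theta$ and has covariance exactly $J(\theta)$ as computed in the discussion preceding the theorem.

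Next I would differentiate the unbiasedness identity $\mathbb{E}_\theta[\hat{\theta}(Y)]=\theta$ coordinatewise under the integral to obtain
\[
I_M = \int \hat{\theta}(y)\, \nabla_\theta f(y;\theta)^T\, dy = \mathbb{E}_\theta\!\left[\hat{\theta}(Y)\, s(\theta)^T\right].
\]
Using $\mathbb{E}_\theta[s(\theta)]=0$, I can recenter to get $I_M = \mathbb{E}_\theta[(\hat{\theta}(Y)-\theta)\, s(\theta)^T]$, which identifies the cross-covariance of the estimator error and the score with the identity.

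Then I would assemble the $2M \times 2M$ covariance matrix of the joint random vector $\big(\hat{\theta}(Y)-\theta,\ s(\theta)\big)$, namely
\[
\begin{pmatrix} C(\hat{\theta}) & I_M \\ I_M & J(\theta) \end{pmatrix},
\]
which is positive semidefinite as a covariance matrix. Taking the Schur complement with respect to the $J(\theta)$ block gives $C(\hat{\theta}) - J(\theta)^{-1} \succeq 0$, which is the conclusion.

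The main obstacle is that the Schur-complement step requires $J(\theta)$ to be invertible. I would dispatch this by invoking the observation made right before the theorem statement: $J(\theta) = (4/\sigma^2)\Psi(\theta)\Psi(\theta)^*$ is singular exactly when the vectors $\{\langle\theta,\varphi_n\rangle\varphi_n\}_{n=1}^N$ fail to span $\mathbb{R}^M$, and (by Theorem~\ref{thm.complement property characterization}) this would violate the complement property, hence destroy injectivity of $\mathcal{A}$; but injectivity of $\mathcal{A}$ is presupposed when one talks about an unbiased estimator of $\theta\in\Omega$ from $Y$, since otherwise two distinct parameters in $\Omega$ would induce the same distribution of $Y$ and no estimator could be simultaneously unbiased at both. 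Thus at every $\theta\in\operatorname{int}(\Omega)$ relevant to the theorem, $J(\theta)$ is positive definite and $J(\theta)^{-1}$ makes sense, closing the argument. A secondary, minor obstacle is justifying the interchange of differentiation and integration when the estimator $\hat{\theta}$ is only assumed measurable with finite second moments; here one appeals to the exponential tails of the Gaussian density together with the polynomial dependence of $\mathcal{A}(\theta)$, which dominate $\hat{\theta}(y)\partial_{\theta_i}f(y;\theta)$ uniformly on compact neighborhoods of $\theta$.
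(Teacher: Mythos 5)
Your proposal is correct and is, in substance, the same argument the paper relies on. Note that the paper does not actually write out a proof of this real-case statement --- it defers to Theorem~4.1 of \cite{Balan:12} --- but it proves the complex analogue (Theorem~\ref{thm.complex random stability}) in full, and your argument is that proof in different clothing: the paper differentiates the unbiasedness identity under the integral and applies Cauchy--Schwarz in $f$-weighted $L^2$ with the choice $b=J(\theta)^{-1}a$, which is exactly the Schur-complement inequality you extract from positive semidefiniteness of the joint covariance of $\big(\hat{\theta}(Y)-\theta,\,s(\theta)\big)$. Your handling of the invertibility of $J(\theta)$ --- positive definiteness under injectivity via the complement property, and vacuity of the claim when $\mathcal{A}$ is not injective because no unbiased estimator can then exist --- is a nice way to make rigorous the remark the paper places just before the theorem. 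The one place you are looser than the paper is the interchange of differentiation and integration: since $\hat{\theta}$ is only assumed to lie in $L^2(f(\cdot;\theta)\,dy)$, you cannot dominate $\hat{\theta}(y)\,\partial_{\theta_i}f(y;\theta)$ ``uniformly on compact neighborhoods'' by an integrable function in any pointwise sense. The paper's Lemma~\ref{lemma.differentiation under the integral sign} instead bounds the difference quotient $|f(y;\theta+z\delta_i)-f(y;\theta)|/\big(|z|f(y;\theta)\big)$ by a function $b(y;\theta)$ with finite second moment under $f(\cdot;\theta)$, and then pairs it against the finite second moment of $\hat{\theta}$ via Cauchy--Schwarz before invoking dominated convergence. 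That is the correct way to discharge your ``secondary obstacle''; with it, your proof is complete.
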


This result was first given by Balan (see Theorem~4.1 in~\cite{Balan:12}).
Note that the requirement that $\theta$ be in the interior of $\Omega$ can be weakened to $\theta\neq0$ by recognizing that our choice for $\theta$ (dividing by the sign of the last nonzero entry) was arbitrary.
To interpret this theorem, note that
\begin{align*}
\operatorname{Tr}[C(\hat{\theta})]
&=\operatorname{Tr}[\mathbb{E}[(\hat{\theta}(Y)-\theta)(\hat{\theta}(Y)-\theta)^\mathrm{T}]]\\
&=\mathbb{E}[\operatorname{Tr}[(\hat{\theta}(Y)-\theta)(\hat{\theta}(Y)-\theta)^\mathrm{T}]]
=\mathbb{E}[\operatorname{Tr}[(\hat{\theta}(Y)-\theta)^\mathrm{T}(\hat{\theta}(Y)-\theta)]]
=\mathbb{E}\|\hat{\theta}(Y)-\theta\|^2,
\end{align*}
and so Theorem~\ref{thm.real random stability} and the linearity of the trace together give $\mathbb{E}\|\hat{\theta}(Y)-\theta\|^2=\operatorname{Tr}[C(\hat{\theta})]\geq\operatorname{Tr}[J(\theta)^{-1}]$.
In the previous section, Definition~\ref{definition.worst-case stability} provided a notion of worst-case stability based on the existence of an estimator with small error. 
By analogy, Theorem~\ref{thm.real random stability} demonstrates a converse of sorts: that no unbiased estimator will have mean squared error smaller than $\operatorname{Tr}[J(\theta)^{-1}]$.
As such, a stable measurement ensemble might minimize $\sup_{\theta\in\Omega}\operatorname{Tr}[J(\theta)^{-1}]$, although this is a particularly cumbersome objective function to work with.
More interestingly, Theorem~\ref{thm.real random stability} provides another numerical strengthening of the complement property (analogous to the strong complement property of the previous section).
Unfortunately, we cannot make a more rigorous comparison between the worst- and average-case analyses of stability; indeed, our worst-case analysis exploited the fact that $\sqrt{\mathcal{A}}$ is bilipschitz (which $\mathcal{A}$ is not), and as we shall see, the average-case analysis depends on $\mathcal{A}$ being differentiable (which $\sqrt{\mathcal{A}}$ is not).

To calculate the information matrix in the complex case, we first express our parameter vector in real coordinates: $\theta=(\theta_1+\mathrm{i}\theta_{M+1},\theta_2+\mathrm{i}\theta_{M+2},\ldots,\theta_M+\mathrm{i}\theta_{2M})$, that is, we view $\theta$ as a $2M$-dimensional real vector by concatenating its real and imaginary parts.
Next, for any arbitrary function $g\colon\mathbb{R}^{2M}\rightarrow\mathbb{C}$, the product rule gives
\begin{equation}
\label{eq.fisher complex 1}
\frac{\partial}{\partial\theta_i}|g(\theta)|^2
=\frac{\partial}{\partial\theta_i}g(\theta)\overline{g(\theta)}
=\bigg(\frac{\partial}{\partial\theta_i}g(\theta)\bigg)\overline{g(\theta)}+g(\theta)\overline{\bigg(\frac{\partial}{\partial\theta_i}g(\theta)\bigg)}
=2\operatorname{Re}g(\theta)\frac{\partial}{\partial\theta_i}\overline{g(\theta)}.
\end{equation}
Since we care about partial derivatives of $\mathcal{A}(\theta)$, we take $g(\theta)=\langle\theta,\varphi_n\rangle=\sum_{m=1}^M(\theta_m+\mathrm{i}\theta_{M+m})\overline{\varphi_n(m)}$, and so 
\begin{equation}
\label{eq.fisher complex 2}
\frac{\partial}{\partial\theta_i}\overline{g(\theta)}
=\left\{
\begin{array}{cl}
\varphi_n(i) &\mbox{if } i\leq M\\
-\mathrm{i}\varphi_n(i-M) &\mbox{if } i>M.
\end{array}
\right.
\end{equation}
Combining \eqref{eq.fisher complex 1} and \eqref{eq.fisher complex 2} then gives the following expression for the Fisher information matrix:
Take $\Psi(\theta)$ to be the $2M\times N$ matrix whose $n$th column is formed by stacking the real and imaginary parts of $\langle\theta,\varphi_n\rangle \varphi_n$; then $J(\theta)=\frac{4}{\sigma^2}\Psi(\theta)\Psi(\theta)^*$.

\begin{lemma}
\label{lemma.J is invertible}
Take $\widetilde{J}(\theta)$ to be the $(2M-1)\times(2M-1)$ matrix that comes from removing the last row and column of $J(\theta)$.
If $\mathcal{A}$ is injective, then $\widetilde{J}(\theta)$ is positive definite for every $\theta\in\operatorname{int}(\Omega)$.
\end{lemma}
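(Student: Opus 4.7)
The plan is to leverage Theorem~\ref{thm.complex injective} to pin down the null space of the full $2M \times 2M$ matrix $J(\theta) = \frac{4}{\sigma^2}\Psi(\theta)\Psi(\theta)^*$, and then to observe that the constraint cutting $\Omega$ out of $\mathbb{R}^{2M}$ is transverse to this null space, so that restricting the quadratic form to the constraint hyperplane kills the degeneracy.

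First I would note that the column space of $\Psi(\theta)$, viewed in $\mathbb{R}^{2M}$, is exactly the real span $S(\theta) = \operatorname{span}_\mathbb{R}\{\varphi_n\varphi_n^*\theta\}_{n=1}^N$ from Theorem~\ref{thm.complex injective}: stacking the real and imaginary parts of $\langle\theta,\varphi_n\rangle\varphi_n = \varphi_n\varphi_n^*\theta$ is exactly the real-vector identification used there, and the corresponding orthogonal complement is taken with respect to the standard inner product on $\mathbb{R}^{2M}$, which matches the real inner product $\langle \cdot,\cdot\rangle_\mathbb{R}$ on $\mathbb{C}^M$. Since $\mathcal{A}$ is injective and $\theta \neq 0$, part (c) of that theorem gives $S(\theta) = \operatorname{span}_\mathbb{R}\{\mathrm{i}\theta\}^\perp$, so
\begin{equation*}
\operatorname{null}(J(\theta)) = \operatorname{span}_\mathbb{R}\{\mathrm{i}\theta\}.
\end{equation*}
In particular, $J(\theta)$ is positive semidefinite of rank exactly $2M-1$.

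Next I would observe that removing the last row and column of $J(\theta)$ amounts to restricting the associated quadratic form to the hyperplane $H = \{v \in \mathbb{R}^{2M} : v_{2M} = 0\}$: for $v \in H$ with first $2M-1$ entries $\tilde v$, one has $v^\mathrm{T} J(\theta) v = \tilde v^\mathrm{T} \widetilde{J}(\theta) \tilde v$. Hence $\widetilde{J}(\theta)$ is positive definite if and only if $\operatorname{span}_\mathbb{R}\{\mathrm{i}\theta\} \cap H = \{0\}$, i.e.\ if and only if $\mathrm{i}\theta \notin H$. The convention defining $\Omega$ rotates each representative so that its last nonzero complex entry is a positive real number, and the interior corresponds to the generic stratum where this last nonzero entry is the $M$th one. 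In the real coordinates used to form $\Psi(\theta)$, this forces $\theta_M > 0$ and $\theta_{2M} = 0$. A direct computation in coordinates (the $M$th complex component $\mathrm{i}(\theta_M + \mathrm{i}\theta_{2M})$ contributes imaginary part $\theta_M$) shows that the $2M$th real coordinate of $\mathrm{i}\theta$ equals $\theta_M$, which is strictly positive; hence $\mathrm{i}\theta \notin H$, and $\widetilde{J}(\theta)$ is positive definite.

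The main (and very mild) obstacle is bookkeeping: one must carefully translate between the complex inner-product picture driving Theorem~\ref{thm.complex injective} and the real $2M \times 2M$ block structure of the Fisher information, and verify that the normalization defining $\Omega$ really does cut out the hyperplane $\{v_{2M} = 0\}$ in the correct coordinates. Once these identifications are in place, the positive-semidefinite reduction is a one-line consequence of injectivity.
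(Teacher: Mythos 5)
Your proof is correct and follows essentially the same route as the paper's: both reduce positive definiteness of $\widetilde{J}(\theta)$ to showing that the null space of $J(\theta)$, which Theorem~\ref{thm.complex injective}(c) identifies as $\operatorname{span}_\mathbb{R}\{\mathrm{i}\theta\}$, meets the hyperplane $\{v_{2M}=0\}$ trivially, which follows from $\theta_M>0$ on $\operatorname{int}(\Omega)$. The only cosmetic difference is that the paper phrases this as extending a null vector of $\widetilde{J}(\theta)$ by a zero entry and showing it is orthogonal to the columns of $\Psi(\theta)$, rather than your global null-space-intersects-hyperplane formulation.
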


\begin{proof}
First, we note that $J(\theta)=\frac{4}{\sigma^2}\Psi(\theta)\Psi(\theta)^*$ is necessarily positive semidefinite, and so 
\begin{equation*}
\inf_{\|x\|=1}x^\mathrm{T}\tilde{J}(\theta)x
=\inf_{\|x\|=1}[x; 0]^\mathrm{T}J(\theta)[x; 0]
\geq\inf_{\|y\|=1}y^\mathrm{T}J(\theta)y
\geq0.
\end{equation*}
As such, it suffices to show that $\tilde{J}(\theta)$ is invertible.

To this end, take any vector $x$ in the null space of $\widetilde{J}(\theta)$.
Then defining $y:=[x; 0]\in\mathbb{R}^{2M}$, we have that $J(\theta)y$ is zero in all but (possibly) the $2M$th entry.
As such, $0=\langle y,J(\theta)y\rangle=\|\frac{2}{\sigma}\Psi(\theta)^*y\|^2$, meaning $y$ is orthogonal to the columns of $\Psi(\theta)$.
Since $\mathcal{A}$ is injective, Theorem~\ref{thm.complex injective} then gives that $y=\alpha\mathrm{i}\theta$ for some $\alpha\in\mathbb{R}$.
But since $\theta\in\operatorname{int}(\Omega)$, we have $\theta_M>0$, and so the $2M$th entry of $\mathrm{i}\theta$ is necessarily nonzero.
This means $\alpha=0$, and so $y$ (and thus $x$) is trivial.
\qquad
\end{proof}

\begin{theorem}
\label{thm.complex random stability}
For $x\in\mathbb{C}^M$, let $Y=\mathcal{A}(x)+Z$ denote noisy intensity measurements with $Z$ having independent $\mathcal{N}(0,\sigma^2)$ entries.
Furthermore, define the parameter $\theta$ to be $x$ divided by the phase of its last nonzero entry, and view $\theta$ as a vector in $\mathbb{R}^{2M}$ by concatenating its real and imaginary parts; let $\Omega\subseteq\mathbb{R}^{2M}$ denote all such $\theta$.
Then for any unbiased estimator $\hat{\theta}(Y)$ of $\theta$ in $\Omega$ with a finite $2M\times 2M$ covariance matrix $C(\hat{\theta})$, the last row and column of $C(\hat{\theta})$ are both zero, and the remaining $(2M-1)\times(2M-1)$ submatrix $\widetilde{C}(\hat{\theta})$ has the property that $\widetilde{C}(\hat{\theta})-\widetilde{J}(\theta)^{-1}$ is positive semidefinite whenever $\theta\in\operatorname{int}(\Omega)$. 
\end{theorem}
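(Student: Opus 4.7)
The plan is to recognize this theorem as a direct application of the multivariate Cramer-Rao lower bound, once one strips off the trivial coordinate that is artificially fixed to zero by our choice of phase normalization. Concretely, dividing $x$ by the phase of its last nonzero entry forces the last entry to be a nonnegative real, so that $\theta_{2M} = \operatorname{Im}(x_M/\mathrm{phase}(x_M)) \equiv 0$ on $\Omega$. Thus $\Omega$ is really a $(2M-1)$-dimensional subset of the hyperplane $\{\theta_{2M}=0\} \subseteq \mathbb{R}^{2M}$, and any estimator $\hat{\theta}$ taking values in $\Omega$ must have $\hat{\theta}_{2M} \equiv 0$ almost surely. This immediately yields the vanishing of the last row and column of $C(\hat\theta)$, giving the first half of the conclusion.

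Next I would reparameterize: set $\tilde\theta := (\theta_1,\ldots,\theta_{2M-1}) \in \tilde\Omega \subseteq \mathbb{R}^{2M-1}$ and note that the density
\begin{equation*}
f(y;\theta) = \frac{1}{(2\pi\sigma^2)^{N/2}} e^{-\|y-\mathcal{A}(\theta)\|^2/2\sigma^2}
\end{equation*}
depends only on $\tilde\theta$ since $\theta_{2M}$ is pinned at $0$. Consequently the Fisher information of the reduced model (parameterized by $\tilde\theta$) is obtained by simply deleting the $2M$th row and column of the original $J(\theta)$, yielding precisely $\widetilde J(\theta)$. By Lemma~\ref{lemma.J is invertible}, $\widetilde J(\theta)$ is positive definite on $\operatorname{int}(\Omega)$, so its inverse exists.

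Now I would invoke the standard multivariate Cramer-Rao lower bound: for an unbiased estimator $\hat{\tilde\theta}(Y)$ of $\tilde\theta$ with finite covariance $\widetilde C(\hat\theta)$, one has $\widetilde C(\hat\theta) - \widetilde J(\theta)^{-1} \succeq 0$ whenever $\tilde\theta \in \operatorname{int}(\tilde\Omega)$. The usual regularity hypotheses (smoothness of the Gaussian density, interchange of differentiation and integration, and finiteness of second moments of the score function) are trivially satisfied here because $f(y;\theta)$ is an entire function of $\theta$ and decays like a Gaussian in $y$. This directly establishes the second half of the conclusion.

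The main subtlety, and the one deserving the most care, is the reduction from the $2M$-dimensional parameter language of the theorem to the $(2M-1)$-dimensional effective model. Specifically, one must justify that every unbiased estimator of $\theta \in \Omega$ (with finite covariance) can without loss be assumed to output values in $\Omega$, so that $\hat{\theta}_{2M} = 0$ a.s. rather than merely $\mathbb{E}[\hat{\theta}_{2M}] = 0$; otherwise the last row/column of $C(\hat\theta)$ would not literally vanish. Once this interpretational point is fixed (by convention, the estimator is $\Omega$-valued, just as in Theorem~\ref{thm.real random stability}), the theorem is an immediate instance of the standard Cramer-Rao inequality applied to the reduced parameter, and Lemma~\ref{lemma.J is invertible} supplies the invertibility needed to state the bound in the inverse-information form.
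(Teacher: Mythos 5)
Your proposal is correct and rests on the same underlying mathematics as the paper's proof---the Cram\'{e}r--Rao bound applied to the $2M-1$ nondegenerate coordinates, with Lemma~\ref{lemma.J is invertible} supplying invertibility of $\widetilde{J}(\theta)$ and the observation that $\hat{\theta}_{2M}\equiv 0$ for any $\Omega$-valued estimator killing the last row and column of $C(\hat{\theta})$---but the execution differs in two ways worth noting. First, rather than reparameterizing to a $(2M-1)$-dimensional model and citing the standard multivariate CRLB as a black box, the paper re-derives the bound from scratch: it establishes $I=\int(\hat{\theta}(y)-\theta)(\nabla_\theta\log f(y;\theta))^{\mathrm{T}}f(y;\theta)\,dy$, applies Cauchy--Schwarz in $f$-weighted $L^2$ to get $(a^{\mathrm{T}}b)^2\leq(a^{\mathrm{T}}C(\hat{\theta})a)(b^{\mathrm{T}}J(\theta)b)$, and then handles the degenerate coordinate by the specific choice $b=[\widetilde{J}(\theta)^{-1}\tilde{a};0]$, which lets it work with the full (singular) $J(\theta)$ throughout and only pass to $\widetilde{J}(\theta)$ at the end. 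Your reduction to the effective parameter is cleaner and equally valid; the identification of the reduced model's Fisher information with the principal $(2M-1)\times(2M-1)$ block of $J(\theta)$ is immediate from the definition. Second, the one place where you undersell the work is the claim that the regularity hypotheses are ``trivially satisfied'': the interchange of differentiation and integration against $(\hat{\theta}(y))_j f(y;\theta)$ is exactly what the paper's Lemma~\ref{lemma.differentiation under the integral sign} is devoted to, and its verification (constructing a dominating function with finite second moment, using that $\hat{\theta}_j$ has finite second moment because $C(\hat{\theta})$ is finite) is routine but not free---note that the estimator $\hat{\theta}$ is an arbitrary measurable function, so Gaussian decay of $f$ alone does not dispose of the issue. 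Your instinct that the condition holds is right, but a complete write-up would need to supply this verification or an explicit citation covering it.
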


\begin{proof}
We start by following the usual proof of the vector parameter Cramer-Rao lower bound (see for example Appendix 3B of~\cite{Kay:93}).
Note that for any $i,j\in\{1,\ldots,2M\}$,
\begin{align*}
\int_{\mathbb{R}^N}\big((\hat{\theta}(y))_j-\theta_j\big)\frac{\partial\log f(y;\theta)}{\partial\theta_i}f(y;\theta)dy
&=\int_{\mathbb{R}^N}(\hat{\theta}(y))_j\frac{\partial f(y;\theta)}{\partial\theta_i}dy-\theta_j\int_{\mathbb{R}^N}\frac{\partial f(y;\theta)}{\partial\theta_i}dy\\
&=\frac{\partial}{\partial\theta_i}\int_{\mathbb{R}^N}(\hat{\theta}(y))_jf(y;\theta)dy-\theta_j\frac{\partial}{\partial\theta_i}\int_{\mathbb{R}^N}f(y;\theta)dy,
\end{align*}
where the second equality is by differentiation under the integral sign (see Lemma~\ref{lemma.differentiation under the integral sign} for details; here, we use the fact that $\hat{\theta}$ has a finite covariance matrix so that $\hat{\theta}_j$ has a finite second moment).
Next, we use the facts that $\hat{\theta}$ is unbiased and $f(\cdot;\theta)$ is a probability density function (regardless of $\theta$) to get
\begin{equation*}
\int_{\mathbb{R}^N}\big((\hat{\theta}(y))_j-\theta_j\big)\frac{\partial\log f(y;\theta)}{\partial\theta_i}f(y;\theta)dy
=\frac{\partial \theta_j}{\partial \theta_i}
=\left\{
\begin{array}{ll}
1&\mbox{if } i=j\\
0&\mbox{if } i\neq j
\end{array}
\right.
.
\end{equation*}
Thus, letting $\nabla_\theta\log f(y;\theta)$ denote the column vector whose $i$th entry is $\frac{\partial\log f(y;\theta)}{\partial\theta_i}$,
we have
\begin{equation*}
I
=\int_{\mathbb{R}^N}\big(\hat{\theta}(y)-\theta\big)\big(\nabla_\theta\log f(y;\theta)\big)^\mathrm{T}f(y;\theta)dy.
\end{equation*}
Equivalently, we have that for all column vectors $a,b\in\mathbb{R}^{2M}$,
\begin{equation*}
a^\mathrm{T}b
=\int_{\mathbb{R}^N}a^\mathrm{T}\big(\hat{\theta}(y)-\theta\big)\big(\nabla_\theta\log f(y;\theta)\big)^\mathrm{T}b~f(y;\theta)dy.
\end{equation*}
Next, we apply the Cauchy-Schwarz inequality in $f$-weighted $L^2$ space to get
\begin{align*}
\big(a^\mathrm{T}b\big)^2
&=\bigg(\int_{\mathbb{R}^N}a^\mathrm{T}\big(\hat{\theta}(y)-\theta\big)\big(\nabla_\theta\log f(y;\theta)\big)^\mathrm{T}b~f(y;\theta)dy\bigg)^2\\
&\leq\bigg(\int_{\mathbb{R}^N}a^\mathrm{T}\big(\hat{\theta}(y)-\theta\big)\big(\hat{\theta}(y)-\theta\big)^\mathrm{T}a~f(y;\theta)dy\bigg)\bigg(\int_{\mathbb{R}^N}b^\mathrm{T}\big(\nabla_\theta\log f(y;\theta)\big)\big(\nabla_\theta\log f(y;\theta)\big)^\mathrm{T}b~f(y;\theta)dy\bigg)\\
&=\big(a^\mathrm{T}C(\hat{\theta})a\big)\big(b^\mathrm{T}J(\theta)b\big),
\end{align*}
where the last step follows from pulling vectors out of integrals.
At this point, we take $b:=[\widetilde{J}(\theta)^{-1}\tilde{a}; 0]$, where $\tilde{a}$ is the first $2M-1$ entries of $a$.
Then
\begin{equation}
\label{eq.CRLB to divide}
\big(\tilde{a}^\mathrm{T}\widetilde{J}(\theta)^{-1}\tilde{a}\big)^2
=\big(a^\mathrm{T}b\big)^2
\leq\big(a^\mathrm{T}C(\hat{\theta})a\big)\big(b^\mathrm{T}J(\theta)b\big)
=\big(a^\mathrm{T}C(\hat{\theta})a\big)\big(\tilde{a}^\mathrm{T}\widetilde{J}(\theta)^{-1}\tilde{a}\big).
\end{equation}
At this point, we note that since the last (complex) entry of $\theta\in\Omega$ is necessarily positive, then as a $2M$-dimensional real vector, the last entry is necessarily zero, and furthermore every unbiased estimator $\hat{\theta}$ in $\Omega$ will also vanish in the last entry.
It follows that the last row and column of $C(\hat{\theta})$ are both zero.
Furthermore, since $\widetilde{J}(\theta)^{-1}$ is positive by Lemma~\ref{lemma.J is invertible}, division in \eqref{eq.CRLB to divide} gives
\begin{equation*}
\big(\tilde{a}^\mathrm{T}\widetilde{J}(\theta)^{-1}\tilde{a}\big)
\leq\big(a^\mathrm{T}C(\hat{\theta})a\big)
=\big(\tilde{a}^\mathrm{T}\widetilde{C}(\hat{\theta})\tilde{a}\big),
\end{equation*}
from which the result follows.
\qquad
\end{proof}

\section*{Appendix}

Here, we verify that we can differentiate under the integral sign in the proof of Theorem~\ref{thm.complex random stability}.

\begin{lemma}
\label{lemma.differentiation under the integral sign}
Consider the probability density function defined by
\begin{equation*}
f(y;\theta)
=\frac{1}{(2\pi \sigma^2)^{N/2}}e^{-\|y-\mathcal{A}(\theta)\|^2/2\sigma^2}
\qquad
\forall y\in\mathbb{R}^N.
\end{equation*}
Then for every function $g\colon\mathbb{R}^N\rightarrow\mathbb{R}$ with finite second moment
\begin{equation*}
\int_{\mathbb{R}^N}g(y)^2f(y;\theta)dy
<\infty
\qquad
\forall \theta\in\Omega,
\end{equation*}
we can differentiate under the integral sign:
\begin{equation*}
\frac{\partial}{\partial\theta_i}\int_{\mathbb{R}^N}g(y)f(y;\theta)dy
=\int_{\mathbb{R}^N}g(y)\frac{\partial}{\partial\theta_i}f(y;\theta)dy.
\end{equation*}
\end{lemma}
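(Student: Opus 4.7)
The plan is to apply the dominated convergence theorem to the difference quotient. For each fixed $y$, the mean value theorem gives
\[
\frac{f(y;\theta+he_i)-f(y;\theta)}{h} = \frac{\partial f}{\partial\theta_i}(y;\theta+\xi e_i)
\]
for some $\xi=\xi(y,h)$ between $0$ and $h$, and by continuity this converges pointwise to $\partial_{\theta_i}f(y;\theta)$ as $h\to 0$. It therefore suffices to exhibit an integrable function of $y$ that dominates $|g(y)\,\partial_{\theta_i}f(y;\theta')|$ uniformly over $\theta'$ in some fixed compact neighborhood $K$ of $\theta$.

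Direct differentiation yields
\[
\frac{\partial f}{\partial\theta_i}(y;\theta') = \frac{1}{\sigma^2}\sum_{n=1}^N\big(y_n-\mathcal{A}(\theta')_n\big)\frac{\partial\mathcal{A}(\theta')_n}{\partial\theta_i}\,f(y;\theta'),
\]
and since each $\mathcal{A}(\theta')_n$ is a quadratic polynomial in $\theta'$, both $\mathcal{A}(\theta')_n$ and its partials are uniformly bounded on $K$. Thus $|\partial_{\theta_i}f(y;\theta')|\le C_1(\|y\|+C_2)f(y;\theta')$ for every $\theta'\in K$. The crucial step is then to dominate $f(y;\theta')$ itself by a single Gaussian whose variance is slightly inflated from $\sigma^2$: choose any $\sigma_1$ with $\sigma<\sigma_1<\sigma\sqrt{2}$, and by completing the square one verifies that, after shrinking $K$ if necessary,
\[
f(y;\theta') \le C_3\exp\!\left(-\frac{\|y-\mathcal{A}(\theta)\|^2}{2\sigma_1^2}\right)\qquad\forall\,\theta'\in K,
\]
because the quadratic form $\|y-\mathcal{A}(\theta)\|^2/(2\sigma_1^2)-\|y-\mathcal{A}(\theta')\|^2/(2\sigma^2)$ has strictly negative leading coefficient in $\|y\|^2$, and the neighborhood can be chosen small enough that the remaining terms are bounded uniformly in $\theta'\in K$.

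With this Gaussian domination in hand, Cauchy--Schwarz against the weight $f(y;\theta)^{1/2}$ gives
\[
\int |g(y)|(\|y\|+C_2)e^{-\|y-\mathcal{A}(\theta)\|^2/(2\sigma_1^2)}\,dy \le \left(\int g(y)^2 f(y;\theta)\,dy\right)^{1/2}\!\!\left(\int (\|y\|+C_2)^2 e^{\|y-\mathcal{A}(\theta)\|^2(1/(2\sigma^2)-1/\sigma_1^2)}\,dy\right)^{1/2}.
\]
The first factor is finite by hypothesis, and the second is finite because $1/(2\sigma^2)-1/\sigma_1^2<0$ precisely when $\sigma_1^2<2\sigma^2$, which makes the integrand a polynomial times a Gaussian. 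Dominated convergence then justifies the interchange, yielding the claimed identity.

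The main obstacle is precisely this last integrability estimate. The naive domination $f(y;\theta')\le Ce^{-\|y\|^2/(4\sigma^2)}$ (obtained, say, by $-\|y-a'\|^2/(2\sigma^2)\le -\|y\|^2/(4\sigma^2)+C$ via Young's inequality) is too weak: Cauchy--Schwarz would then require $g$ to be square-integrable against a measure with exponential tails strictly heavier than those of $f(\cdot;\theta)$, which is not assumed. The fix is to squeeze $\sigma_1^2$ strictly into the open interval $(\sigma^2,2\sigma^2)$, so that the rebalancing factor $e^{-\|y-\mathcal{A}(\theta)\|^2/\sigma_1^2}/f(y;\theta)$ still decays as a Gaussian rather than growing unboundedly in $y$.
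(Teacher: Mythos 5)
Your argument is correct, and while it shares the overall skeleton of the paper's proof (dominated convergence for the difference quotient, closed off by Cauchy--Schwarz in the $f(\cdot;\theta)$-weighted $L^2$ space, which is the only way to exploit the bare hypothesis that $g$ has finite second moment against $f(\cdot;\theta)\,dy$), the construction of the dominating function is genuinely different. The paper follows Lehmann--Casella and bounds the \emph{relative} difference quotient $|f(y;\theta+z\delta_i)-f(y;\theta)|/|z|f(y;\theta)$ directly, obtaining a majorant of the form $(C_1\|y\|+C_2)e^{c(\theta)\|y\|}$; the cross term $\langle y,\mathcal{A}(\theta+z\delta_i)-\mathcal{A}(\theta)\rangle$ forces the $e^{c\|y\|}$ factor, and the bulk of the paper's proof is then a hands-on verification that this exponentially growing majorant still has finite second moment against the Gaussian $f(\cdot;\theta)$ (splitting $\mathbb{R}^N$ into a ball and its complement, absorbing $e^{2c\|y\|}$ into a quarter of the Gaussian exponent, and controlling the polynomial prefactor via the roots of $p(x;\theta)$). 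You instead pass through the mean value theorem to reduce to bounding the actual derivative $\partial_{\theta_i}f(y;\theta')$, which is only a polynomial in $\|y\|$ times $f(y;\theta')$, and then absorb the $\theta'$-dependence of the Gaussian center by inflating the variance to $\sigma_1^2\in(\sigma^2,2\sigma^2)$; the strict upper bound $\sigma_1^2<2\sigma^2$ is exactly what keeps the Cauchy--Schwarz companion factor $e^{-\|y-\mathcal{A}(\theta)\|^2/\sigma_1^2}/f(y;\theta)$ Gaussian-decaying. Your route trades the paper's lengthy tail estimate for the variance-inflation trick and is arguably cleaner; your closing remark correctly identifies why a cruder domination (centering at the origin or taking $\sigma_1^2=2\sigma^2$ at the boundary) would break the final integrability. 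The only cosmetic caveats: the shrinking of $K$ is not actually needed for the Gaussian domination (negativity of the leading quadratic coefficient suffices, with the constant depending on $\sup_{\theta'\in K}\|\mathcal{A}(\theta')\|$), and you should note in passing that $f(y;\cdot)$ is smooth in $\theta$ (since each $(\mathcal{A}(\theta))_n$ is a polynomial) so that the mean value theorem applies for each fixed $y$.
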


\begin{proof}
First, we adapt the proof of Lemma~5.14 in~\cite{LehmannC:98} to show that it suffices to find a function $b(y;\theta)$ with finite second moment such that, for some $\varepsilon>0$,
\begin{equation}
\label{eq.b requirement}
\bigg|\frac{f(y;\theta+z\delta_i)-f(y;\theta)}{zf(y;\theta)}\bigg|\leq b(y;\theta)
\qquad
\forall y\in\mathbb{R}^{N}, \theta\in\Omega, |z|<\varepsilon, z\neq0
\end{equation}
where $\delta_i$ denotes the $i$th identity basis element in $\mathbb{R}^{2M}$.
Indeed, by applying the Cauchy-Schwarz inequality over $f$-weighted $L^2$ space, we have
\begin{equation*}
\int_{\mathbb{R}^N}|g(y)|b(y;\theta)f(y;\theta)dy
\leq\bigg(\int_{\mathbb{R}^N}g(y)^2f(y;\theta)dy
\bigg)^{1/2}\bigg(\int_{\mathbb{R}^N}b(y;\theta)^2f(y;\theta)dy
\bigg)^{1/2}
<\infty
\end{equation*}
and so the dominated convergence theorem gives
\begin{align*}
\int_{\mathbb{R}^N}g(y)\frac{\partial}{\partial\theta_i}f(y;\theta)dy
&=\int_{\mathbb{R}^N}\lim_{z\rightarrow0}\bigg(g(y)\frac{f(y;\theta+z\delta_i)-f(y;\theta)}{zf(y;\theta)}\bigg)f(y;\theta)dy\\
&=\lim_{z\rightarrow0}\int_{\mathbb{R}^N}\bigg(g(y)\frac{f(y;\theta+z\delta_i)-f(y;\theta)}{zf(y;\theta)}\bigg)f(y;\theta)dy\\
&=\lim_{z\rightarrow0}\frac{1}{z}\bigg(\int_{\mathbb{R}^N}g(y)f(y;\theta+z\delta_i)dy-\int_{\mathbb{R}^N}g(y)f(y;\theta)dy\bigg)\\
&=\frac{\partial}{\partial\theta_i}\int_{\mathbb{R}^N}g(y)f(y;\theta)dy.
\end{align*}
In pursuit of such a function $b(y;\theta)$, we first use the triangle and Cauchy-Schwarz inequalities to get
\begin{align}
\bigg|\frac{f(y;\theta+z\delta_i)-f(y;\theta)}{zf(y;\theta)}\bigg|
\nonumber
&=\frac{1}{|z|}\Big|e^{-\frac{1}{2\sigma^2}\big(\|y-\mathcal{A}(\theta+z\delta_i)\|^2-\|y-\mathcal{A}(\theta)\|^2\big)}-1\Big|\\
\nonumber
&=\frac{1}{|z|}\Big|e^{-\frac{1}{2\sigma^2}\big(\|\mathcal{A}(\theta+z\delta_i)\|^2-\|\mathcal{A}(\theta)\|^2-2\langle y,\mathcal{A}(\theta+z\delta_i)-\mathcal{A}(\theta)\rangle\big)}-e^{\frac{1}{\sigma^2}\langle y,\mathcal{A}(\theta+z\delta_i)-\mathcal{A}(\theta)\rangle}
+e^{\frac{1}{\sigma^2}\langle y,\mathcal{A}(\theta+z\delta_i)-\mathcal{A}(\theta)\rangle}-1\Big|\\
\nonumber
&\leq\frac{1}{|z|}\bigg(e^{\frac{1}{\sigma^2}\langle y,\mathcal{A}(\theta+z\delta_i)-\mathcal{A}(\theta)\rangle}\Big|e^{-\frac{1}{2\sigma^2}\big(\|\mathcal{A}(\theta+z\delta_i)\|^2-\|\mathcal{A}(\theta)\|^2\big)}-1\Big|+\Big|e^{\frac{1}{\sigma^2}\langle y,\mathcal{A}(\theta+z\delta_i)-\mathcal{A}(\theta)\rangle}-1\Big|\bigg)\\
\label{eq.diff int to bound}
&\leq\frac{1}{|z|}\bigg(e^{\frac{1}{\sigma^2}\|y\|\|\mathcal{A}(\theta+z\delta_i)-\mathcal{A}(\theta)\|}\Big|e^{-\frac{1}{2\sigma^2}\big(\|\mathcal{A}(\theta+z\delta_i)\|^2-\|\mathcal{A}(\theta)\|^2\big)}-1\Big|+\Big|e^{\frac{1}{\sigma^2}\|y\|\|\mathcal{A}(\theta+z\delta_i)-\mathcal{A}(\theta)\|}-1\Big|\bigg),
\end{align}
Denote $c(z;\theta):=\frac{1}{\sigma^2}\|\mathcal{A}(\theta+z\delta_i)-\mathcal{A}(\theta)\|$.
Since $(e^{st}-1)/t\leq se^{st}$ whenever $s,t\geq0$, we then have
\begin{equation*}
\frac{|e^{c(z;\theta)\|y\|}-1|}{|z|}
=\frac{c(z;\theta)}{|z|}\cdot\frac{e^{c(z;\theta)\|y\|}-1}{c(z;\theta)}
\leq\frac{c(z;\theta)}{|z|}\|y\|e^{c(z;\theta)\|y\|}.
\end{equation*}
Also by l'Hospital's rule, there exist continuous functions $C_1$ and $C_2$ on the real line such that
\begin{equation*}
C_1(z;\theta)
=\frac{c(z;\theta)}{|z|},
\qquad
C_2(z;\theta)
=\frac{1}{|z|}\Big|e^{-\frac{1}{2\sigma^2}\big(\|\mathcal{A}(\theta+z\delta_i)\|^2-\|\mathcal{A}(\theta)\|^2\big)}-1\Big|,
\qquad
\forall z\neq0.
\end{equation*}
Thus, continuing \eqref{eq.diff int to bound} gives
\begin{equation*}
\bigg|\frac{f(y;\theta+z\delta_i)-f(y;\theta)}{zf(y;\theta)}\bigg|
\leq\Big(C_1(z;\theta)\|y\|+C_2(z;\theta)\Big)e^{c(z;\theta)\|y\|}.
\end{equation*}
%
%
%
Now for a fixed $\varepsilon$, take $C_j(\theta):=\sup_{|z|<\varepsilon}C_j(z;\theta)$ and $c(\theta):=\sup_{|z|<\varepsilon}c(z;\theta)$, and define 
\begin{equation*}
b(y;\theta):=\Big(C_1(\theta)\|y\|+C_2(\theta)\Big)e^{c(\theta)\|y\|}.
\end{equation*}
Since $C_j(\theta)$ and $c(\theta)$ are suprema of continuous functions over a bounded set, these are necessarily finite for all $\theta\in\Omega$.
As such, our choice for $b$ satisfies \eqref{eq.b requirement}.
It remains to verify that $b$ has a finite second moment.
To this end, let $B(R(\theta))$ denote the ball of radius $R(\theta)$ centered at the origin (we will specify $R(\theta)$ later).
Then
\begin{align}
\nonumber
\int_{\mathbb{R}^N}b(y;\theta)^2f(y;\theta)dy
&=\int_{B(R(\theta))}b(y;\theta)^2f(y;\theta)dy+\int_{\mathbb{R}^N\setminus B(R(\theta))}b(y;\theta)^2f(y;\theta)dy\\
\nonumber
&\leq \Big(C_1(\theta)R(\theta)+C_2(\theta)\Big)^2e^{2c(\theta)R(\theta)}\\
\label{eq.bound second moment of b}
&\qquad+\frac{1}{(2\pi \sigma^2)^{N/2}}\int_{\mathbb{R}^N\setminus B(R(\theta))}\Big(C_1(\theta)\|y\|+C_2(\theta)\Big)^2e^{2c(\theta)\|y\|-\frac{1}{2\sigma^2}\|y-\mathcal{A}(\theta)\|^2}dy.
\end{align}
From here, we note that whenever $\|y\|\geq 2\|\mathcal{A}(\theta)\|+8\sigma^2c(\theta)$, we have
\begin{align*}
\|y-\mathcal{A}(\theta)\|^2
&\geq\|y\|^2-2\|y\|\|\mathcal{A}(\theta)\|+\|\mathcal{A}(\theta)\|^2\\
&\geq\Big(2\|\mathcal{A}(\theta)\|+8\sigma^2c(\theta)\Big)\|y\|-2\|y\|\|\mathcal{A}(\theta)\|+\|\mathcal{A}(\theta)\|^2\\
&\geq8\sigma^2c(\theta)\|y\|.
\end{align*}
Rearranging then gives $2c(\theta)\|y\|\leq\frac{1}{4\sigma^2}\|y-\mathcal{A}(\theta)\|^2$.
Also let $h(\theta)$ denote the larger root of the polynomial
\begin{equation*}
p(x;\theta):=2C_1(\theta)^2\Big(x^2-2\|\mathcal{A}(\theta)\|x+\|\mathcal{A}(\theta)\|^2\Big)-\Big(C_1(\theta)x+C_2(\theta)\Big)^2,
\end{equation*}
and take $h(\theta):=0$ when the roots of $p(x;\theta)$ are not real.
(Here, we are assuming that $C_1>0$, but the proof that \eqref{eq.bound second moment of b} is finite when $C_1=0$ quickly follows from the $C_1>0$ case.)
Then $(C_1(\theta)\|y\|+C_2(\theta))^2\leq 2C_1(\theta)^2\|y-\mathcal{A}(\theta)\|^2$ whenever $\|y\|\geq h(\theta)$, since by the Cauchy-Schwarz inequality,
\begin{equation*}
2C_1(\theta)^2\|y-\mathcal{A}(\theta)\|^2-\Big(C_1(\theta)\|y\|+C_2(\theta)\Big)^2
\geq p(\|y\|;\theta)
\geq 0,
\end{equation*}
where the last step follows from the fact that $p(x;\theta)$ is concave up.
Now we continue by taking $R(\theta):=\max\{2\|\mathcal{A}(\theta)\|+8\sigma^2c(\theta),h(\theta)\}$:
\begin{align*}
&\int_{\mathbb{R}^N\setminus B(R(\theta))}\Big(C_1(\theta)\|y\|+C_2(\theta)\Big)^2e^{2c(\theta)\|y\|-\frac{1}{2\sigma^2}\|y-\mathcal{A}(\theta)\|^2}dy\\
&\qquad\leq \int_{\mathbb{R}^N\setminus B(R(\theta))}2C_1(\theta)^2\|y-\mathcal{A}(\theta)\|^2e^{-\frac{1}{4\sigma^2}\|y-\mathcal{A}(\theta)\|^2}dy\\
&\qquad\leq \big(2\pi(\sqrt{2}\sigma)^2\big)^{N/2}\cdot2C_1(\theta)^2\int_{\mathbb{R}^N}\|x\|^2\frac{1}{(2\pi(\sqrt{2}\sigma)^2)^{N/2}}e^{-\|x\|^2/2(\sqrt{2}\sigma)^2}dx,
\end{align*}
where the last step comes from integrating over all of $\mathbb{R}^N$ and changing variables $y-\mathcal{A}(\theta)\mapsto x$.
This last integral calculates the expected squared length of a vector in $\mathbb{R}^N$ with independent $\mathcal{N}(0,2\sigma^2)$ entries, which is $2N\sigma^2$.
Thus, substituting into \eqref{eq.bound second moment of b} gives that $b$ has a finite second moment.
\qquad
\end{proof}

\section*{Acknowledgments}
The authors thank Irene Waldspurger and Profs.\ Bernhard G.\ Bodmann, Matthew Fickus, Thomas Strohmer and Yang Wang for insightful discussions, and the Erwin Schr\"{o}dinger International Institute for Mathematical Physics for hosting a workshop on phase retrieval that helped solidify some of the ideas in this paper.
A.\ S.\ Bandeira was supported by NSF DMS-0914892, and J.\ Cahill was supported by NSF 1008183, NSF ATD 1042701, and AFOSR DGE51: FA9550-11-1-0245.
The views expressed in this article are those of the authors and do not reflect the official policy or position of the United States Air Force, Department of Defense, or the U.S.~Government.

\end{document}